\providecommand{\CE}{{\cal E}}
\providecommand{\bbR}{\mathbb{R}}
\providecommand*{\N}[1]{\left\|{#1}\right\|} 
\newcommand{\normmsq}[1]{\left\| #1 \right\|_2^2}
\newcommand{\normm}[1]{\left\| #1 \right\|_2}
\newcommand{\inner}[2]{\left< #1 , #2 \right>}
\newcommand{\cV}{{\mathscr{V}}^*}
\newcommand{\dist}[1]{\operatorname{dist}(#1)}
\newcommand{\Ep}[1]{\mathbb{E}\left(#1\right)}
\newcommand{\cP}{\mathcal{P}_R}
\newcommand{\cv}{\mathcal{V}}
\newcommand{\cT}{\mathcal{T}}
\newcommand{\cC}{\mathcal{C}}
\newcommand{\cc}{C_{\mathrm{large}}}
\newcommand{\R}{\tilde{R}}
\newcommand{\rr}{\dot{r}}
\newcommand{\Cone}{\vartheta}
\newcommand{\Ctwo}{C_1}
\newcommand{\Cthree}{C_2}
\newcommand{\Cfour}{C_3}
\newcommand{\Cfive}{C_4}
\newcommand{\Csix}{C_5}
\newcommand{\Cseven}{C_6}
\newcommand{\Ceight}{C_7}
\newcommand{\Cnine}{C_8}
\newcommand{\Cten}{C_9}
\newcommand{\Celeven}{C_{10}}
\newcommand{\Ctwelve}{C_{11}}
\newcommand{\Cthirteen}{C_{12}}
\newcommand{\Cfourteen}{C_{13}}
\newcommand{\Cfifteen}{C_{14}}
\newcommand{\Csixteen}{C_{15}}
\newcommand{\Cseventeen}{C_{16}}
\newcommand{\Ceightteen}{C_{17}}
\newcommand{\Cnineteen}{C_{18}}
\newcommand{\Ctwenty}{C_{19}}
\newcommand{\Ctwentyone}{C_{20}}
\newcommand{\Ctwentytwo}{C_{21}}
\newcommand{\Ctwentythree}{C_{22}}
\newcommand{\Ctwentyfour}{C_{23}}
\newcommand{\Ctwentyfive}{C_{24}}
\newcommand{\Ctwentysix}{C_{25}}
\newcommand{\Cctr}{C_{\mathrm{a}}}
\newcommand{\Csmall}{C_{\mathrm{b}}}
\newcommand{\Cexp}{C_{\mathrm{exp}}}
\newcommand{\Cerr}{C_{\mathrm{err}}}
\newcommand{\Cd}{C_{\mathrm{sup}}}
\newcommand{\sml}{\ell}
\newcommand{\mm}{Z}
\newif\ifrevised
\newcommand{\revised}[1]{%
	\ifrevised
	{\color{blue} #1} \color{black} %
	\else
	#1%
	\fi}
\newcommand{\overbar}[1]{\makebox[0pt]{$\phantom{#1}\mkern 1.5mu\overline{\mkern-1.5mu\phantom{#1}\mkern-1.5mu}\mkern 1.5mu$}#1}
\renewcommand{\underbar}[1]{\makebox[0pt]{$\phantom{#1}\mkern 1.5mu\underline{\mkern-1.5mu\phantom{#1}\mkern-1.5mu}\mkern 1.5mu$}#1}
\newcommand{\globmin}{v^*}
\renewcommand{\CE}{f}
\newcommand{\empmeasure}[1]{\widehat\rho_{#1}^N}
\newcommand{\conspoint}[1]{v_{\alpha}({#1})}
\renewcommand{\underbar}[1]{\makebox[0pt]{$\phantom{#1}\mkern 1.5mu\underline{\mkern-1.5mu\phantom{#1}\mkern-1.5mu}\mkern 1.5mu$}#1}
\newcommand{\zonea}{\mathcal{Z}_1}
\newcommand{\zoneb}{\mathcal{Z}_2}
\newcounter{results}[section]
\theoremstyle{plain}
\newtheorem{theorem}[results]{Theorem}
\newtheorem{lemma}[results]{Lemma}
\newtheorem{proposition}[results]{Proposition}
\newtheorem{assumption}[results]{Assumption}
\theoremstyle{remark}
\newtheorem{remark}[results]{Remark}
\theoremstyle{definition}
\title{
A PDE Framework of Consensus-Based Optimization for Objectives with Multiple Global Minimizers
}
\date{March, 5 2024}
\author[1,2,3]{Massimo Fornasier\thanks{Email: \texttt{massimo.fornasier@cit.tum.de} }}
\author[4,5]{Lukang Sun\thanks{Email: \texttt{lukang.sun@kaust.edu.sa} }}
\affil[1]{Technical University of Munich, School of Computation, Information and Technology, Department of Mathematics, Munich, Germany}
\affil[2]{Munich Center for Machine Learning, Munich, Germany}
\affil[3]{Munich Data Science Institute, Germany}
\affil[4]{King Abdullah University of Science and Technology, Thuwal, Saudi Arabia}
\affil[5]{KAUST AI Initiative, Thuwal, Saudi Arabia}
\begin{document}
	\maketitle
	\begin{abstract}
Consensus-based optimization (CBO) is an agent-based derivative-free method for non-smooth global optimization that has been introduced in 2017 \cite{B1-pinnau2017consensus}, leveraging a surprising
interplay between stochastic exploration and Laplace’s principle. In addition to its versatility and effectiveness in handling high-dimensional, \revised{non-convex,} and \revised{non-smooth} optimization problems, this approach lends itself well to theoretical analysis. Indeed, its dynamics is governed by a degenerate nonlinear Fokker--Planck equation, whose \revised{large time} behavior explains the convergence of the method. Recent results \cite{carrillo2018analytical,B1-fornasier2021global} provide guarantees of convergence under the restrictive assumption of unique global minimizer for the objective function. In this work, we \revised{propose} a novel and simple variation of CBO to tackle \revised{non-convex optimization problems} with multiple \revised{global} minimizers. {Despite the simplicity of this new model, its analysis is particularly challenging because of its nonlinearity and nonlocal nature.}
We prove existence of solutions of the corresponding nonlinear Fokker--Planck equation and we show exponential concentration in time to the set of minimizers made of multiple smooth, convex, and compact components. {Our proofs require combining several ingredients, such as delicate geometrical arguments, new  variants of a  quantitative Laplace principle,  {\it ad hoc} regularizations and approximations, and  regularity theory for parabolic equations.} Ultimately, this result suggests that the corresponding CBO algorithm, formulated as an Euler--Maruyama discretization of the underlying empirical stochastic process, tends to converge to \revised{multiple} global minimizers.
	\end{abstract}

{\noindent\small{\textbf{Keywords:} global optimization, derivative-free optimization, consensus-based optimization, stochastic differential equations,
nonlinear Fokker--Planck equations, well-posedness, large time asymptotics}}\\
	
	{\noindent\small{\textbf{AMS subject classifications:} 35Q84, 35Q91, 35Q93, 37N40, 60H10}}

\tableofcontents

	\section{Introduction}	
{ 
Computing efficiently minimal points of a potentially non-convex and non-smooth objective functions is a highly relevant problem across diverse applications spanning applied mathematics, science, technology, engineering, and machine learning. The global optimization task can be succinctly defined as:

\begin{equation}
\label{eq:B1-optimization_problem}
(\arg)\min_{v\in \Omega} f(v)\revised{.}
\end{equation}

Here, we aim to find the minimal values $f(v)$ with the variable $v$ within the state space $\Omega$, a subset of $\mathbb{R}^{d}$, where $d$ may in some case be notably large. Below we are interested in unconstrained optimizations only, for which $\Omega = \mathbb{R}^{d}$. While this problem is straightforward to pose, solving it can be extremely daunting when dealing with general \revised{non-convex} objective functions, as there are no clear directional or geometrical principles to guide the search for global minima. The difficulty of this task is further compounded when we attempt to address it using methods reliant solely on local information, such as gradient descent techniques. This is due to the abundance of local minima resulting from the \revised{non-convex} nature of the problem. To address these limitations of local higher-order methods, various approaches have been explored in computer science and engineering. In recent years, inspiration has been drawn from the concept of ``\revised{swarm} intelligence", which is rooted in physical or biological models and involves interacting particle or agent systems. Among these methods, gradient-free techniques, which rely solely on evaluations of the objective function, have proven to be sensible strategies in scenarios with low regularity or when the computation of gradients is prohibitively expensive. 

\paragraph{Consensus-Based Optimization and its analysis.}
A family of methods have recently been introduced under the terminology of Consensus-Based Optimization (CBO) methods~\cite{B1-pinnau2017consensus,carrillo2018analytical}, reviewed in~\cite{B1-totzeck2021trends}. These methods aim at fusing the cooling strategy of Simulated Annealing \cite{B1-Kirkpatrick1983,B1-SA86,B1-SA87} towards Gibbs equilibria with the space exploration by multiple particles/explorers as in Particle Swarm Optimization \cite{B1-kennedy1995particle} by taking advantage of a consensus mechanism as in Cucker--Smale \cite{B1-fornasier2010CS} and opinion formation models, in which an average orientation or opinion is obtained from the individual observations. 
The equations defining the iterates $V^i_k$ of the CBO algorithm read ($i=1,\dots,N$ labelling the particles, and $k=0,\dots,K$ denoting the iterates)
\begin{align}
\label{eq:B1-CBO1}
V^i_{k+1} &= V^i_{k} - \Delta t \lambda(V^i_k-v_{\alpha}(\widehat \rho_k^N)) + \sqrt{\Delta t}\sigma \|V^i_k-v_{\alpha}(\widehat  \rho_k^N)\|_2  B_k^i, 
\end{align}
where
\begin{align}
v_{\alpha}(\widehat \rho_k^N) = \frac{1 }{\sum_{i=1}^N \omega_\alpha^f(V_k^i)} \sum_{i=1}^N V_k^i \omega_\alpha^f(V_k^i), 
\end{align}
and $f$ is the objective function to be minimized, $v_{\alpha}(\widehat \rho_k^N)$ is the so-called consensus point, $\omega_\alpha^f(v)=e^{-\alpha {f}(v)}$ is  the Gibbs weight, and $B^i_k$ is an independent standard Gaussian random vector. The initial \revised{particles} $V_0^i$ are drawn i.i.d. at random according to a given probability distribution $\rho_0$.
The algorithm \eqref{eq:B1-CBO1} has a very simple formulation (it can be implemented in couple of lines of code) with complexity $\mathcal O(N)$ and is \revised{easily parallelizable, see, e.g., \cite{benfenati22,JMLR:v25:23-0764}.} Moreover, it is  based \revised{solely on} pointwise evaluations of the objective function $f$, hence, it does not require any higher order information. \\
The main mechanism of CBO is a combination of a relaxation drift towards the weighted average of the particles and their stochastic perturbation (exploitation-exploration mechanism). It is important to note that the original formulation of CBO, as described in equation \eqref{eq:B1-CBO1}, was designed to function under the restrictive assumption that the objective function $f$ has a {\it unique} minimizer.
In this seemingly restrictive setting, the first rigorous proof of {\it local} convergence to the unique global minimizer of the mean-field dynamics was achieved quite recently in \cite{carrillo2018analytical}. 
To remark the relevance of this contribution let us mention that, at that time, the latter work showed for the first time the convergence of an agent-based stochastic scheme for global optimization with mild assumptions on the regularity of the objective function.
The proof assumes (in the subsequent work \cite{B1-fornasier2021global} this assumption is rigorously justified, see below) that the dynamics for $\Delta t \to 0$ and $N \to \infty$ in \eqref{eq:B1-CBO1} can be described by its mean-field approximation
\begin{equation}\label{eq:B1-CBO2}
		d \overline{V}_t
		=-\lambda\left(\overline{V}_t-v_\alpha(\rho_t)\right)dt+\sigma \|\overline{V}_t-v_\alpha(\rho_t)\|_2 d B_t.
	\end{equation}
 where  $v_\alpha(\rho)=\frac{\int v \omega^{f}_\alpha(v) d\rho(v)}{\int \omega^f_\alpha(v) d\rho(v)}$ and $\rho_t= \operatorname{Law}(\overline{V}_t)$, which fulfills the nonlinear Fokker--Planck equation
 \begin{align} \label{eq:B1-fokker_planck}
	\partial_t\rho_t
	= \lambda\operatorname{div} \big(\!\left(v - v_\alpha(\rho_t)\right)\rho_t\big)
	+ \frac{\sigma^2}{2}\Delta\big(\|v-v_\alpha(\rho_t)\|_2^2\rho_t\big),
\end{align}
\revised{here $v\in\mathbb{R}^d$ is the variable of the equation.}
The large time behavior of \eqref{eq:B1-fokker_planck} that explains the convergence of \eqref{eq:B1-CBO1} is a particularly challenging problem, because \eqref{eq:B1-fokker_planck} does not fulfill a recognizable gradient flow structure and it is not prone to the usual entropy dissipation methods for large time asymptotics. By employing  an ingenious application of Laplace's principle, an {\it ad hoc}  variance analysis leveraging the PDE \eqref{eq:B1-fokker_planck}, and  provided well-preparedness of the initial datum $\rho_0$ to be concentrated in the \revised{neighbourhood} of $v^*$, the authors of \cite{carrillo2018analytical} ensure that $v_\alpha(\rho_t) \approx v^* =\ \arg\min_{v\in \mathbb R^d}f(v)$ and the dynamics ought to approach the global minimizer $v^*$ at large time; namely, they prove that $\operatorname{Var}(\rho_t) =\int \| v- E(\rho_t)\|_2^2 d\rho_t(v) \to 0$ and $E(\rho_t)= \int v d\rho_t(v) \approx v^*$ for $t\to \infty$.
However, in the work \cite{carrillo2018analytical} the authors did not investigate the convergence of the fully discrete finite particle dynamics \eqref{eq:B1-CBO1} to the mean-field approximation \eqref{eq:B1-CBO2} for $\Delta t \to 0$ and $N \to \infty$, hence missing to obtain a comprehensive result of convergence for \eqref{eq:B1-CBO1}.
The combination of standard results (see \cite{platen1999introduction})  of the Euler--Maruyama numerical approximation of the iterations \eqref{eq:B1-CBO1} for $\Delta t \to 0$ to solutions $V_t^i$, $i=1,\dots,N$ of the empirical system
\begin{equation}
d {V}^i_t
		=-\lambda\left(V^i_t-v_\alpha(\hat \rho_t^N )\right)dt+\sigma \|V^i_t-v_\alpha(\widehat \rho^N_t)\|_2 d B_t^i, \quad v_\alpha(\widehat \rho_t^N ) =  \frac{1 }{\sum_{i=1}^N \omega_\alpha^f(V_t^i)} \sum_{i=1}^N V_t^i \omega_\alpha^f(V_t^i),
\end{equation}
a quantitative version of Laplace's principle, which we report below in Lemma \ref{lem:qlp}, and a quantitative mean-field limit (see also \cite[Theorem 2.6]{gerber2023propagation})
\begin{equation}\label{eq:qmfl}
   \sup_{t \in [0,T]}\sup_{i=1,\dots,N} \mathbb E\left [ \| \overline V^i_t -V^i_t\|_2^2\right ]  \leq CN^{-1}, 
\end{equation}
\revised{where $\overline V^i_t$ are $N$ particles satisfying \eqref{eq:B1-CBO2} with $\overline V^i_0=V_0^i, i=1,\ldots,N$,}
allowed for a comprehensive  result of  {\it global} convergence to the {\it unique}  global minimizer in finite particle dynamics at discrete time, as obtained first in \cite{B1-fornasier2021global}. 
The result synthetically reads as follows: 
for any $M>0$, consider the set of bounded processes \begin{equation}\label{eq:bndp}
\Omega_M= \left \{\sup_{t\in[0,T]} \frac{1}{N}\sum_{i=1}^N \max\left\{\N{V_t^i}_2^4,\N{\overbar{V}_t^i}_2^4\right\} \leq M\right \},
\end{equation} whose probability $\mathbb P(\Omega_M) \geq 1- \frac{C}{M} = \delta$ (see \cite[Lemma 3.10]{B1-fornasier2021global}), for $C>0$ independent of $N$. After $K$ iterations of \eqref{eq:B1-CBO1} to achieve a suitable $T^* = K \Delta t$ (to be defined below, see \eqref{eq:finerr}), the expected error conditional to bounded processes is estimated by
\begin{equation}\label{B1-ownresult} \small
		\begin{aligned}
			\Ep{\normmsq{\frac{1}{N} \sum_{i=1}^N V_{K}^i-\globmin}\Bigg |\Omega_M}&\leq 3\left(\underbrace{\Ep{\normmsq{\frac{1}{N} \sum_{i=1}^N \left(V_{K}^i-V_{T^*}^i\right)}\Bigg |\Omega_M}}_{=: I}+{ \underbrace{\Ep{\normmsq{\frac{1}{N} \sum_{i=1}^N \left(V_{T^*}^i-\overline V_{T^*}^i\right)}}}_{=: II} } \right.\\
			&\left.+{\underbrace{\Ep{\normmsq{\frac{1}{N} \sum_{i=1}^N \overline V_{T^*}^i-\globmin}
   }}_{=: III} } \right)
   \leq C_{\mathrm{NA}}\Delta t+\frac{C_{\mathrm{MFA}}}{N}+\epsilon,
		\end{aligned}
	\end{equation}
	\revised{where $\epsilon$ is the error bound for the term $III$, $C_{\mathrm{NA}},C_{\mathrm{MFA}}$ are constants depending on $\lambda,\sigma,d,\alpha,f,T^*$.}
Then a simple application of Markow inequality and Bayes rule (see \cite[{Proof of Theorem 3.8}]{B1-fornasier2021global}) implies
\begin{equation}\label{B1-ownresult2}
  \mathbb P\Bigg( \normmsq{\frac{1}{N} \sum_{i=1}^N V_{K}^i-\globmin} \leq \varepsilon\Bigg) \geq 1 - \left [ \varepsilon^{-1} (C_{\mathrm{NA}}\Delta t+C_{\mathrm{MFA}} N^{-1}+\epsilon) -\delta \right ],  
\end{equation}
and, with this, one obtains convergence in probability of the CBO iterations \eqref{eq:B1-CBO1}\footnote{The necessity of conditioning on bounded processes is due to the fact that the convergence of the Euler--Maruyama scheme holds for bounded processes \cite{platen1999introduction}.}.
\revised{Here $\varepsilon>0$ is the accuracy of the final approximation and $\epsilon>0$ comes from \eqref{eq:finerr} below.}  \\ 
The error $I$ describes the discrepancy from the Euler--Maruyama approximation \eqref{eq:B1-CBO1} to the solution $V_t^i$ of the corresponding empirical SDE for $\Delta t \to 0$ \cite{platen1999introduction}. The error $II$ describes the quantitative mean-field approximation from \eqref{eq:qmfl}, and $III$ the large time behavior of the mean-field solution \eqref{eq:B1-CBO2}.
Notably, in contrast to \cite{carrillo2018analytical}, where a local variance analysis was employed, for error $III$ the authors of \cite{B1-fornasier2021global} conducted a study of the dynamics of the 2-Wasserstein distance  $W_2(\rho_t,\delta_{v^*})$ along the solution $\rho_t$.
Namely, the global convergence is resulting from the quantification of the approximation $v_\alpha(\rho_t) \approx v^*$ at all times for $\alpha>0$ large enough, and the consequent observation that the Wasserstein distance $W_2^2(\rho_t, \delta_{v^*})$ is a natural Lyapunov functional for \eqref{eq:B1-fokker_planck} at finite time, with exponential decay
\begin{equation}\label{eq:contr}
W_2^2(\rho_t, \delta_{v^*}) \leq W_2^2(\rho_0, \delta_{v^*}) e^{-(1-\theta)(2\lambda - d \sigma^2) t}, 
\end{equation}
for a suitable $\theta \in (0,1)$ for $t \in [0,T^*]$ and 
\begin{equation}\label{eq:finerr}
    W_2^2(\rho_{T^*}, \delta_{v^*}) \leq \epsilon.
\end{equation}
What makes this latter result  remarkable is that it unveiled a surprising phenomenon: CBO has the capability of convexifying a wide array of optimization problems as the number $N$ of optimizing agents approaches infinity, effectively transforming the original \revised{non-convex} optimization problem \eqref{eq:B1-optimization_problem} into the canonical - convex - task of minimizing the squared (transport) distance to the global minimizer \eqref{eq:contr} (see \cite[Figure 1]{B1-fornasier2021global} where expected trajectories of particles remarkably form straight rays to the global minimizer).
 Moreover, the result improved over prior analyses by placing minimal constraints on the method's initialization $\rho_0$ (in particular no concentration in the \revised{neighbourhood} of $v^*$ is needed) and by encompassing objectives that are only locally Lipschitz continuous. 
  While the analysis benefited from developing a quantitative mean-field limit with a Monte Carlo rate of convergence in the number $N$ of particles, it is worth noting that the constants in the estimate may exhibit exponential dependence on the dimension, which is to encompass also optimization problems \revised{intrinsically} affected by the curse of \revised{dimensionality}.  Other convergence results applied directly to the fully discrete microscopic system have been obtained in \cite{B1-ha2020convergenceHD,B1-ha2021convergence,doi:10.1142/S0218202522500245}, although they do not provide \revised{quantitative rates to global minimizers. A recent paper \cite{bellavia2024discreteconsensusbasedglobaloptimization} combines the techniques of continuous and discrete in time approaches, obtaining new results of global convergence.}\\
\paragraph{Relevance of Consensus-Based Optimization in applications.}
Besides these  theoretical results, CBO has been used successfully with state-of-the-art performances in several high-dimensional benchmark applications and we recall here a few of them: \cite{B1-fornasier2020consensus_sphere_convergence} applies CBO for a phase retrieval problem, robust subspace detection, and the robust computation of eigenfaces; \cite{B1-riedl2022leveraging} solves a compressed sensing task; \cite{B1-carrillo2019consensus,B1-fornasier2021anisotropic,B1-riedl2022leveraging} train shallow neural networks; \cite{B1-trillos2023FedCBO} devises FedCBO to solve clustered federated learning problems while ensuring maximal data privacy. The extensions below ensure that CBO can be applied also to an even more diverse spectrum of problems.

\paragraph{Extensions of Consensus-Based Optimization.} Variations of the CBO  methods have also been put forward to address global optimization problems with constraints. In cases of equality-constrained problems, it is handled by ensuring that the dynamics stay within the feasible manifold at all times, as done in \cite{B1-fornasier2020consensus_sphere_convergence,B1-fornasier2021anisotropic}. Additional related works on CBO for constrained optimizations are discussed in \cite{B1-kim2020stochastic, B1-ha2021emergent}, which tackle problems on the Stiefel manifold, and in the works of \cite{B1-carrillo2021consensus, B1-borghi2021constrained}, where constrained optimization is reformulated as a penalized problem \cite{B1-bostan2013asymptotic}. The philosophy of utilizing interacting swarms of particles to tackle various significant problems in science and engineering has given rise to further adaptations and extensions of the original CBO algorithm for optimization. These include methods  for addressing multi-objective optimization problems \cite{B1-borghi2022consensus, B1-borghi2022adaptive}, saddle point problems \cite{B1-qiu2022Saddlepoints}, 
or sampling from specific distributions \cite{B1-carrillo2022sampling}. In a similar vein, the original CBO method itself has undergone several modifications to enable more complex dynamics. These enhancements involve particles with memory \cite{B1-totzeck2020consensus, B1-grassi2021mean, B1-riedl2022leveraging}, the integration of momentum \cite{B1-chen2020consensus}, using anisotropic or jump noise processes \cite{B1-carrillo2019consensus,B1-fornasier2021anisotropic,B1-fornasier2022globalanisotropic,B1-kalise2022consensus}
the inclusion of gradient information \cite{B1-riedl2022leveraging}, and the utilization of on-the-fly extracted higher-order differential information through inferred gradients based on point evaluations of the objective function \cite{B1-schillings2022Ensemble}. 
Notably, it has been discovered that the well-known particle swarm optimization method (PSO) \cite{B1-kennedy1995particle} can be reformulated as CBO with momentum, and CBO can be seen as its vanishing inertia limit \cite{grassi2020particle, B1-cipriani2021zero}. This understanding has allowed for the adaptation of CBO's analytical techniques to rigorously prove the convergence of PSO \cite{B1-qiu2022PSOconvergence}. Some of the previous adaptations of CBO yet lack a comprehensive analysis of convergence and there is plenty of work to do to close some of the relevant gaps. 
\\

At this juncture, we have made the decision not to offer a further review of all the very recent developments in this field. Nevertheless, we believe that the description we gave so far stands as compelling evidence of the field's substantial growth and relevance.

\paragraph{Objectives with multiple minimizers and contribution of this work.} 
All the results, applications, and extensions reported above assume explicitly or implicitly that the objective function $f$ to be optimized possesses a unique minimizer. Indeed, when $f$ has multiple global minimizers, the original CBO
dynamic \eqref{eq:B1-CBO2} may have pathological behaviors as it is not designed to converge to multiple minimizers. The pathological cases may be rare, however they can occur, for example: let $d=1, f(v):=(v-1)^2 \zeta_{v\geq 0}+(v+1)^2 \zeta_{v\leq 0},\rho_0 =\mathcal{N}(0,1)$, where $\zeta$ is the characteristic function, then we know $1$ and $-1$ are the two global minimizers of $f$ and $v_{\alpha}(\revised{\rho_t})=0$ for any $t\geq 0$, this means if $\overline V_t$ starts from $0$, then it will always stay at $0$, however $0$ is not a global minimizer of $f$. Moreover, given the fact that in \eqref{eq:B1-CBO2} only one consensus point is allowed, if the particles converge to a \revised{minimizer,} then they necessarily converge to the same minimizer, missing to explore other minimizers that possibly exist and may be of interest.
In order to cope with optimizations with multiple minimizers recently a variation of CBO was proposed in \cite{bungert2022polarized} to allow for multiple consensus points, whose equation reads
\begin{equation}\label{eq:n1}
		dV^{\kappa}_t=-\lambda\left(V^{\kappa}_t-v_{\alpha}(\rho^{\kappa}_t,V^{\kappa}_t)\right)dt+\sigma\left(\normm{V^{\kappa}_t-v_{\alpha}(\rho^{\kappa}_t,V_t^\kappa)}+\kappa\right)dB_t,
	\end{equation}
	where $\rho_t^{\kappa}$ is the distribution of $V_t^{\kappa}$, $\kappa$ may be a small \revised{non-negative} constant~(in \cite{bungert2022polarized}, $\kappa=0$), and \begin{equation}\label{eq:n2}
		v_{\alpha}(\rho_t^{\kappa},V_t^{\kappa}):=\frac{\int w e^{-\alpha A_f(w,V^{\kappa}_t)}d\rho^\kappa_t\revised{(w)}}{\int e^{-\alpha A_f(w,V_t^{\kappa})}d\rho^{\kappa}_t\revised{(w)}},
	\end{equation} 
 whose law $\rho_t^{\kappa} =\operatorname{Law}(V^{\kappa}_t)$  fulfills the related nonlinear Fokker--Planck equation
	\begin{equation}\label{eq:n3}
		{\partial_t}\rho_t^{\kappa}=\lambda\operatorname{div}\left(\left(v-v_{\alpha}(\rho^{\kappa}_t,v)\right)\rho_t^{\kappa}\right)+\frac{\sigma^2}{2}\Delta\left(({\normm{v-v_{\alpha}(\rho^{\kappa}_t,v)}+\kappa})^2\rho_t^{\kappa}\right).
	\end{equation}
 Here $A_f(w,v):=f(w)+\frac{\normmsq{v-w}}{\Cone}$ and $\Cone>0$ is a problem dependent constant and we call this model the {\it polarized CBO}. 
 \revised{The CBO algorithm developed in \cite{bungert2022polarized} is the first in the literature to provide simultaneous computation of multiple minimizers for a broad class of functions.} 
 In \cite{bungert2022polarized}, under the assumption that \eqref{eq:n1} has a solution, the authors provide a proof of concentration for $\kappa=0$ of $\rho_t^{0}$  towards the set of global minimizers as $t \to \infty$ for the case where $f$ is convex and numerical evidences of convergence to multiple global minimizers in case of \revised{non-convex} $f$, yet without a rigorous proof for the latter case. The principle difficulty for obtaining a proof of global convergence for this model of $A_f(w,v)$ for \revised{non-convex} functions is readily provided: if $\Cone$ is fixed and not chosen properly, the polarized CBO dynamic can concentrate around points that are not close to the global minimizers of $f$. For example, in the \revised{one} dimensional case, set $\Cone=1$ and $f(w)=w^2$ for $w\leq 1$, $f(w)=\mathrm{min}\{1,(w-10)^2-1\}$ for $w\in [1,10]$ and $f(w)=(w-10)^2-1$ for $w\geq 10$. It is easy to verify that the only global minimizer of $f$ is $10$; for $v\in [-1,1]$, we have $\arg\min A_f(\cdot,v):=f(\cdot)+\normmsq{v-\cdot}=\frac{v}{2}\in [-1,1]$, so with the Laplace principle (see, e.g., Lemma \ref{lem:qlp} in Appendix) the polarized CBO dynamic can concentrate on points within the interval of $[-1,1]$ with non-negligible probability and these points are not close to $10$ which is the global minimizer of $f$. 
 { We show in Figure \ref{fig:my} a simple numerical experiment that confirms these analytic considerations.} \\
 In order to overcome this limitation of the polarized CBO and to allow for an adaptive scaling, in this work we propose the modified choice
  \begin{equation}\label{eq:extCBO}
  A_f(w,v):=A(w,v):=\frac{1}{\varkappa}f(w)(f(v)+\Cone)+\normmsq{v-w},
\end{equation}
  where \revised{$\varkappa, \Cone>0$} is a problem dependent constant. \revised{For  $A_f(\cdot,v)$ as in \eqref{eq:extCBO}, the coefficient $f(v)+\Cone$ now is a position dependent value and under proper assumptions on $f$, this adaptive scaling guarantees $\arg\min A_f(\cdot,v)$ to be close to some global minimizer of $f$. Thus, by the Laplace's principle, the dynamic \eqref{eq:n1} will drive particles to concentrate towards the set of global minimizers of $f$, check Figure~\ref{fig:my} for a visual comparison between the polarized CBO in \cite{bungert2022polarized} and our model.} Let us stress that the choice \eqref{eq:extCBO} is remarkably simple, yet effective to provide convergence to multiple minimizers in an adaptive way. Indeed, with this choice, we can provide theoretical guarantees that the dynamic~\eqref{eq:n1} has a solution and will  concentrate around the global minimizers of $f$ as $t\to\infty$ under mild assumptions on $f$, which now do allow $f$ to be \revised{non-convex}. The set of assumptions is collected in Section \ref{sec:ass} below. (Below we omit $f$ in $A_f(w,v)$ and we write $A(w,v)$ for ease of notation.) \\

{\begin{figure}
	\centering
	\title{Comparison between our dynamic and the polarized CBO dynamic}
	\includegraphics[width=1\linewidth]{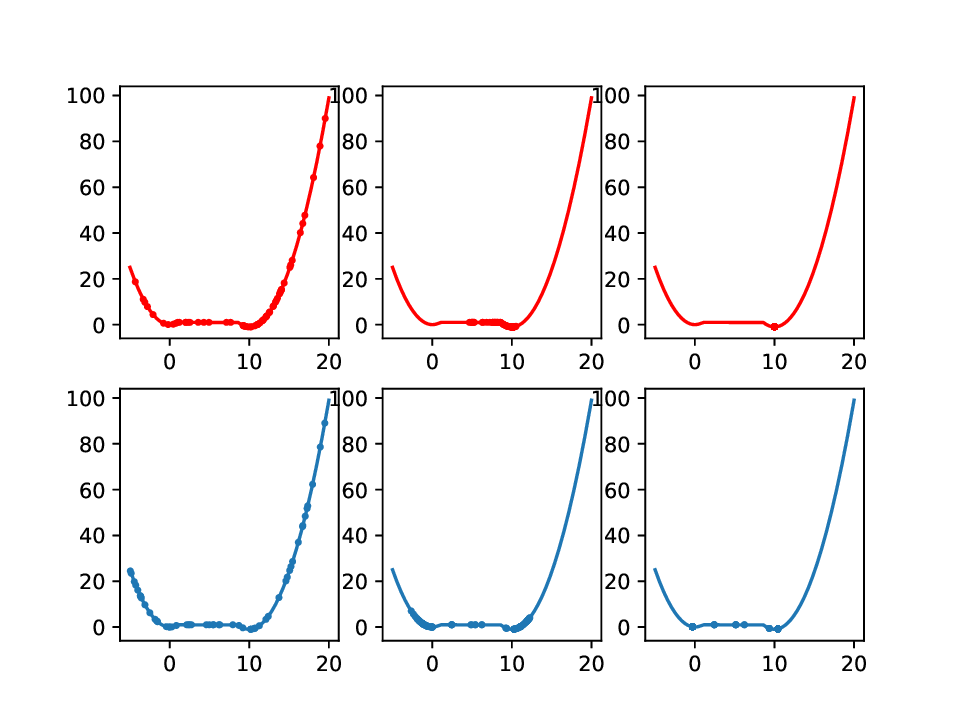}
	\caption{\revised{ We  compare the implementation \eqref{eq:alg} of our model in form of empirical realization (red color) with the one of polarized CBO in \cite{bungert2022polarized} for the minimization of $f(w)=w^2$ for $w\leq 1$, $f(w)=\mathrm{min}\{1,(w-10)^2-1\}$ for $w\in [1,10]$ and $f(w)=(w-10)^2-1$ for $w\geq 10$. 
 The function $f$ does satisfy the assumptions in this paper.
 For each dynamic, from left to right, we use three pictures to illustrate how particles evolve when the iteration number $k=10,20,40$. We can see from the tests that our dynamic eventually concentrates around the unique global minimizer of $f$, while some particles in the polarized CBO remain stuck at points in a non-minimal plateau. 
  We considered the function $A_f(w,v):=f(w)+\frac{\normmsq{v-w}}{\Cone}$ of the polarized CBO. We set $A(w,v)=20f(w)(f(v)+\Cone)+\norm{v-w}_2^2$ in our dynamic. In both dynamics, we set $\lambda=\Cone=1,\sigma=0.2,\alpha=10, N=100,\Delta t=0.3,\kappa=0$ and the initial particles are i.i.d. sampled from $\mathcal{N}(5,10)$.  We also tested for other parameters $\alpha=20,50,100,200$ for the polarized CBO dynamic, but the results do not differ significantly.}}
	\label{fig:my}
\end{figure}}

 \noindent The main result of this paper reads:
  
  \begin{theorem}\label{thm:main0} Let us fix $\kappa>0$ and $A(w,v)$ as in \eqref{eq:extCBO} with $\Cone\geq \sml\sup_{v,w\in\cV}\normm{v-w}^p-\inf_{v\in\mathbb{R}^d}f(v)$, where \revised{$\sml,p>0$ depending on $f$} and $\cV$ is the set of global minimizers of $f$. Assume $\cV$ fulfills $\cV=\cup_{i=1}^{\revised{\mm}} \cV_i$, where $\revised{\mm}\in \mathbb N$ and each $\cV_i$ is compact, convex, and has smooth boundary. Denote  
  \begin{equation}
        \cv_t:= \cv_t^\kappa:=\int {\dist{w,\cV}^2} d\rho_t^\kappa(w).
  \end{equation}
Under Assumption \ref{asp:11}, Assumption \ref{asp:22}, and Assumption \ref{asp:44}, for any $T^*>0$, by choosing \revised{$\varkappa$ small enough and} $\alpha>0$ large enough, the SDE~\eqref{eq:n1} has a strong solution till $T^*$ and its law $\rho^{\kappa}_t$ satisfies the related Fokker--Planck equation \eqref{eq:n3} in the weak sense. Moreover, for any error bound $\epsilon>0$, choose
		$T^*:=\frac{2}{\Cexp}\log(\frac{\cv_0\vee (2\epsilon)}{2\epsilon})$, where $\Cexp>0$ is from \eqref{ieq:144} (as a rule of thumb for $\Cexp>0$ it is sufficient that $\lambda>3 \sigma^2 d$). Then  we can choose $\kappa,\varkappa>0$ small enough and $\alpha>0$  large enough so that 
  \begin{equation}
\mathcal{V}_{t}\leq (\Cexp\epsilon t+\mathcal{V}_0)e^{-\Cexp t} \text{  for all } t \in [0,T^*],
  \end{equation}
  and
    \begin{equation}
  \mathcal{V}_{T^*}\leq2\epsilon.
  \end{equation}
  \end{theorem}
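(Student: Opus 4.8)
The assertion bundles together well-posedness of the nonlinear SDE~\eqref{eq:n1}--\eqref{eq:n3} with $A$ as in~\eqref{eq:extCBO} and the decay of $\cv_t$, and the natural plan is to treat them in that order, since the \emph{a~priori} estimates behind the decay are also what control the fixed point argument on $[0,T^*]$. For well-posedness one exploits that $\kappa>0$: the diffusion coefficient $\sigma(\normm{v-v_\alpha(\rho^\kappa_t,v)}+\kappa)$ is then bounded below by $\sigma\kappa>0$, so the operator in~\eqref{eq:n3} is uniformly parabolic with coefficients of at most linear growth. One freezes the law, checks that $v\mapsto v_\alpha(\mu_t,v)$ is bounded and Lipschitz on bounded sets and locally $W_2$-Lipschitz in $\mu$ --- the only delicate point being a uniform lower bound for the denominator $\int e^{-\alpha A(w,v)}d\mu_t(w)$, which comes from a moment bound on $\mu_t$ and the coercivity of $A(\cdot,v)$ --- and then runs a Banach (or Leray--Schauder) fixed point on $C([0,T^*];\mathcal P_2(\mathbb R^d))$ using De~Giorgi--Nash--Moser/Schauder estimates for the linearised equation, obtaining a strong solution whose law solves~\eqref{eq:n3} weakly, together with $\sup_{t\le T^*}\int\normm{v}^4 d\rho^\kappa_t<\infty$ from Itô's formula; the announced \emph{ad hoc} regularizations amount to first truncating $f$ (hence $A$) to be bounded and globally Lipschitz and passing to the limit.

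\emph{The dissipation inequality.} Set $\phi(v):=\dist{v,\cV}^2$. Since each $\cV_i$ is compact, convex and smooth, $\phi$ is $C^{1,1}$ near $\cV$, coincides locally with $\dist{v,\cV_i}^2$ for the nearest index $i$, and, wherever twice differentiable (off a Lebesgue-null set), satisfies $\nabla\phi(v)=2(v-\pi_\cV(v))$, $\abs{\nabla\phi(v)}=2\dist{v,\cV}$ and $0\le D^2\phi(v)\le 2\,\mathrm{Id}$, so $\Delta\phi\le 2d$; a mild regularization of $\phi$ (it is only semiconcave across the medial axis of $\cV$) makes it an admissible test function in the weak form of~\eqref{eq:n3}. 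Testing, splitting $v-v_\alpha=(v-\pi_\cV(v))+(\pi_\cV(v)-v_\alpha(\rho^\kappa_t,v))$, using $\nabla\phi(v)\cdot(v-\pi_\cV(v))=2\phi(v)$ (here the convexity of the nearest component is used), $\Delta\phi\le 2d$, $(\normm{v-v_\alpha}+\kappa)^2\le 4\phi(v)+4\normmsq{\pi_\cV(v)-v_\alpha}+2\kappa^2$ and Cauchy--Schwarz, I would reach
\begin{equation}\label{eq:plan-ode}
\frac{d}{dt}\cv_t\ \le\ -(2\lambda-c\,\sigma^2 d)\,\cv_t\ +\ C\bigl(\sqrt{\cv_t}\,\delta_t+\delta_t^2+\kappa^2\bigr),\qquad \delta_t:=\Bigl(\int\normmsq{v_\alpha(\rho^\kappa_t,v)-\pi_\cV(v)}\,d\rho^\kappa_t(v)\Bigr)^{1/2},
\end{equation}
for a fixed $c$ of order one; absorbing $C\sqrt{\cv_t}\,\delta_t$ into the first term fixes $\Cexp>0$ and explains the rule of thumb $\lambda>3\sigma^2 d$. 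With $T^*$ chosen as in the statement, it then suffices to make the residual error in~\eqref{eq:plan-ode} at most $\Cexp\epsilon e^{-\Cexp T^*}$ throughout $[0,T^*]$: indeed then $\frac{d}{dt}(e^{\Cexp t}\cv_t)\le\Cexp\epsilon$, hence $\cv_t\le(\cv_0+\Cexp\epsilon t)e^{-\Cexp t}$ on $[0,T^*]$, and the elementary bound $x^2\ge x+\log x$ ($x\ge1$) converts this at $t=T^*$ into $\cv_{T^*}\le 2\epsilon$.

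\emph{The crux: making $\delta_t$ small.} Everything reduces to showing that, for $\varkappa$ small and $\alpha$ large, $v_\alpha(\rho^\kappa_t,\cdot)$ is uniformly close to the metric projection $\pi_\cV(\cdot)$. I would write $v_\alpha(\rho^\kappa_t,v)-\pi_\cV(v)=(v_\alpha(\rho^\kappa_t,v)-w^\star(v))+(w^\star(v)-\pi_\cV(v))$ with $w^\star(v):=\argmin_w A(w,v)$. For the second piece: the penalty $\varkappa^{-1}f(w)(f(v)+\Cone)$ forces $f(w^\star(v))=O(\varkappa)$, so $w^\star(v)$ sits in a narrow tube around $\cV$ (quantitatively, through the growth/conditioning of $f$ near $\cV$ in the standing assumptions), inside which $A(w,v)\approx\normmsq{w-v}$, whose minimiser over $\cV$ is $\pi_\cV(v)$; moreover the threshold $\Cone\ge\sml\sup_{v,w\in\cV}\normm{v-w}^p-\inf_v f(v)$ is exactly what makes the penalty weigh a distant ``wrong'' component heavily enough that the \emph{nearest} one wins. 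This delicate geometric step --- where the union-of-convex-components structure of $\cV$ is genuinely used --- gives $\sup_v\normm{w^\star(v)-\pi_\cV(v)}\to0$ as $\varkappa\to0$ on the bounded set carrying most of the mass, the tails being handled by the moment bound. For the first piece I would invoke the quantitative Laplace principle of Lemma~\ref{lem:qlp}, which replaces $\argmin$ by the weighted average $v_\alpha(\rho^\kappa_t,v)$ up to an error driven by $\alpha^{-1}$ and by a lower bound on the $\rho^\kappa_t$-mass of small balls around $w^\star(v)$, i.e.\ around the relevant component $\cV_i$; this forces me to establish, for some $r,\underline\omega>0$,
\begin{equation}\label{eq:plan-mass}
\inf_{t\in[0,T^*]}\ \min_{1\le i\le\mm}\ \rho^\kappa_t\bigl(B_r(\cV_i)\bigr)\ \ge\ \underline\omega.
\end{equation}

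\emph{Main obstacle.} Estimate~\eqref{eq:plan-mass} is the real difficulty: it presupposes that the dynamics already behaves well, while the good behaviour of the dynamics is what~\eqref{eq:plan-mass} is needed to prove. I would break the circle with a continuity/bootstrap argument over $[0,T^*]$ --- on the largest sub-interval on which the accuracy estimate $\delta_t\le(\text{small})$ holds, the drift of~\eqref{eq:n1} pushes particles near $\cV_i$ towards points of $\cV_i$, so the mass near $\cV_i$ cannot drain faster than a controlled rate (given that $\rho_0$ charges a neighbourhood of each $\cV_i$, part of the assumptions), and this re-feeds the Laplace estimate and extends the sub-interval. Having~\eqref{eq:plan-mass} in hand, choosing first $\kappa$ small (so $\kappa^2\ll\epsilon e^{-\Cexp T^*}$), then $\varkappa$ small, then $\alpha$ large --- each depending on $T^*$ and the previous choices --- drives the residual error in~\eqref{eq:plan-ode} below $\Cexp\epsilon e^{-\Cexp T^*}$, and the Grönwall step above finishes the proof.
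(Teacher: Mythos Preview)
Your outline has the right skeleton—fixed-point well-posedness, a dissipation inequality for $\mathcal V_t$, a quantitative Laplace estimate controlling $\delta_t$, a lower bound on the mass near $\cV$, and a Gr\"onwall step (for which, incidentally, the paper only needs the residual $\le \Cexp\epsilon$, not $\le \Cexp\epsilon\, e^{-\Cexp T^*}$). But two places where the multi-component geometry bites harder than you acknowledge are real gaps.

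\textbf{The Hessian of $\phi=\dist{\cdot,\cV}^2$ across the medial axis.} Your claim that $0\le D^2\phi\le 2\,\mathrm{Id}$ ``off a null set'' and that a ``mild regularization'' suffices is where the work hides. Across $\partial\mathcal Z_1=\{v:\dist{v,\cV_1}=\dist{v,\cV_2}\}$ the gradient $\nabla\phi$ jumps by $2(V_2^*(v)-V_1^*(v))$, so the distributional Laplacian carries a surface-measure term with a sign that must be checked, not mollified away. The paper (Lemma~\ref{lem:6666}) handles this by first regularising the Fokker--Planck coefficients so that the density is $\mathcal C^{2,1}$, then applying Green's identity separately on $\mathcal Z_1$ and $\mathcal Z_2$, and showing that the surviving boundary integral has the \emph{right sign} because $\langle V_2^*(v)-V_1^*(v),\,n_1\rangle\ge 0$ on $\partial\mathcal Z_1$ (a genuine geometric fact: $n_1$ points along the gradient of $\dist{\cdot,\cV_1}^2-\dist{\cdot,\cV_2}^2$). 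An arbitrary smoothing of $\phi$ does not preserve this one-sided inequality; you need the zone-by-zone decomposition plus the limit.

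\textbf{The irreducible $\operatorname{diam}(\cV)$ error in the Laplace step.} Your argument that ``the nearest component wins'' only works away from $\partial\mathcal Z_1$. Near the medial axis $A(\cdot,v)$ has two nearly degenerate wells, one in each zone, and $v_\alpha(\rho_t^\kappa,v)$ can land anywhere in their convex hull—so $\normm{v_\alpha(\rho_t^\kappa,v)-\pi_{\cV}(v)}$ is of order $\operatorname{diam}(\cV)$ there, no matter how large $\alpha$ or how small $\varkappa$. The paper's two-well Laplace principle (Lemma~\ref{lem:9090}) makes this explicit: the bound carries an additive $\operatorname{diam}(\cV)$. The fix is not a sharper Laplace estimate but a three-region decomposition of the integral $\int\normmsq{v_\alpha-V^*(v)}\,d\rho_t^\kappa$ (Lemma~\ref{lem:12}): far away, $\normmsq{v-V^*(v)}$ dominates $\operatorname{diam}(\cV)^2$; on a thin tube $(\partial\mathcal Z_1+B_{\dot r}(0))\cap B_{\tilde R}(0)$ one shows $\rho_t^\kappa$ assigns small mass uniformly in $t\in[0,T^*]$ (Lemma~\ref{lem:13}), which \emph{crucially uses} $\kappa>0$ to guarantee absolute continuity of $\rho_t^\kappa$; on the remainder the two wells of $A(\cdot,v)$ are separated by a fixed gap and the single-well Laplace principle applies. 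Your plan neither identifies this obstacle nor this remedy, and it is the main reason the statement fixes $\kappa>0$.

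Two smaller points: the paper obtains existence by a projection $\mathcal P_R$ plus Leray--Schauder and then an $R\to\infty$ limit via Ascoli--Arzel\`a (Proposition~\ref{prop:333}, Theorem~\ref{prop:2222}), not by a direct Banach fixed point on $\mathcal P_2$; and the mass lower bound near $\cV$ is obtained not by a bootstrap but by a concrete differential inequality for $\int\phi_r^\tau(u-w)\,d\rho_t^\kappa(u)$ (Lemma~\ref{lem:5l}), which gives an exponential-in-$T^*$ lower bound directly.
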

\noindent {A few remarks are in order:}
\\

{Theorem 1.1 establishes existence of solutions. Uniqueness remain\revised{s} an open issue, see Remark \ref{rem:uniq}. The result provides also the exponential concentration of the law $\rho_t^\kappa$ to the set of minimizers of $f$, under mild assumptions on the objective function.} As $\cv_0$ is not known, the horizon $T^*$ to achieve the prescribed error bound is not readily given, yet, as we observe in Remark \ref{rem:estT}, an estimate for $\cv_0$ and $T^*$ can be provided. \\

From a technical point of view, the proof of Theorem \ref{thm:main0} is not a minor incremental variation of the results in \cite{B1-fornasier2021global} or in \cite{bungert2022polarized}.
In fact, the analysis requires combining  delicate geometrical arguments, new  variations of the quantitative Laplace principle,  {\it ad hoc} regularizations and approximations, and  regularity theory for parabolic equations, all ingredients that were not used/needed in \cite{B1-fornasier2021global,bungert2022polarized}.
Moreover, in \cite{bungert2022polarized} the well-posedness of the model is not addressed.

\paragraph{Paper Structure.}
	The main results of this work are separated into two cases: 
	\begin{enumerate}
		\item $\cV$ is compact, convex and has smooth boundary;
		\item $\cV=\cup_{i=1}^{\revised{\mm}} \cV_i$, where each $\cV_i$ is compact, convex and has smooth boundary.
	\end{enumerate}
	The first case is certainly simpler and is included in the second case, however it encapsulates the key insights for the analysis, so once the readers understand this simpler case, it will not be as  hard for them to follow the more technical derivation of the second case. We will discuss the two cases separately in Section~\ref{sec:1} and Section~\ref{sec:2}. 
 In the last section, Section~\ref{sec:4}, we make the conclusion. The closing Appendix collects two versions of the quantitative Laplace principle, which are useful auxiliary results, and the concluding estimates for the convergence of the algorithm.
 
\paragraph{Notations.}
	Let us introduce the notations used in this paper: $\cV$ is the set of global minimizers of $f$ and we will always assume $\cV$ is \textit{compact} in this paper, $\underbar{f}:=\inf_{v\in\mathbb{R}^d}f(v)$,  ${V^*(w) \in \arg\min_{v\in \cV}\normm{v-w}}, \dist{w,\cV}:=\normm{w-V^*(w)}, \operatorname{diam}(\cV):=\sup_{v,w\in\cV}\normm{v-w}$, where $\normm{\cdot}$ denotes the Euclidean norm of a vector and $\cv_t^\kappa:=\int\normmsq{w-V^*(w)}d\rho_t^\kappa(w)$. We will denote \begin{equation}\label{eq:defA}
		A(w,v):=\frac{1}{\varkappa}f(w)(f(v)+\Cone)+\normmsq{v-w},\quad\text{for some constant $\Cone$}.
	\end{equation} We will define the test function $\phi_r^{\tau}$ as 
	\begin{equation}\label{eq:first}
		\phi_r^{\tau}(v):=\begin{cases}
			1+(\tau-1)\normm{\frac{v}{r}}^{\tau}-\tau\normm{\frac{v}{r}}^{\tau-1}& \normm{v}\leq r\\
			0& \revised{\mathrm {else}}
		\end{cases},\quad \tau\geq 3 \text{ and $\tau$ is an integer},
	\end{equation}
	it is easy to verify $\phi_r^{\tau}\in \mathcal{C}_c^1(\mathbb{R}^d,[0,1])$.

\section{Assumptions}\label{sec:ass}
In this paper, we assume $\rho_0\in\mathcal{C}_b^2(\mathbb{R}^d)$. Next, we introduce the assumptions on $f$ used in this work.
\revised{\begin{assumption}\label{asp:11} There exist some constants $\sml>0$ and $p\in [1,2]$, such that
    \begin{equation}
        f(w)-\underbar{f}\geq \sml\dist{w,\cV}^p.
    \end{equation}
\end{assumption}}
	
\begin{assumption}\label{asp:22}There exists some constant $L$, such that 
    \begin{itemize}
        \item For any $w\in\mathbb{R}^d$, we have 
        \begin{equation} \label{eq:gr}
            f(w)-\underbar{f}\leq L(1+\normmsq{w});
        \end{equation}
        \item For any $v,w\in\mathbb{R}^d$, we have
        \begin{equation}\label{eq:lip}
            |{f(v)-f(w)}|\leq L(\normm{w}+\normm{v} \revised{+1})\normm{v-w}.
        \end{equation}
    \end{itemize}
\end{assumption}
\begin{assumption}\label{asp:33}
    \revised{The} set $\cV,$ is compact, convex and has smooth boundary.
\end{assumption}
\begin{assumption}
		\label{asp:44}\revised{The set $\cV,$ is the union of convex, compact sets with smooth boundaries, that is,} $\cV=\cup_{i=1}^{\revised{\mm}} \cV_i$, and each $\cV_i$ is convex, compact set with smooth boundary.
	\end{assumption}
 {Let us comment \revised{on} the assumptions above. \revised{Assumption \ref{asp:11} requires an at least polynomial growth of degree $p \in [1,2]$ from the set of minimizers.} 
 Assumption \ref{asp:22} imposes an at most quadratic growth  and a controlled local Lipschitz continuity of the function $f$. 
Actually \eqref{eq:lip} implies \eqref{eq:gr}, but we keep them both for simplicity of presentation and the use of the same constant $L>0$.
In our analysis below, we consider first the case $f$ fulfilling Assumption \ref{asp:33} and then the one of $f$ satisfying Assumption \ref{asp:44}, which is more general and include\revised{s} the former as $\revised{\mm}=1$. {Let us mention that under assumption Assumption \ref{asp:33}  $V^*(v)$ is uniquely determined for all $v \in \mathbb R^d$, while under Assumption \ref{asp:44} $V^*(v)$ is uniquely determined for a.e. $v \in \mathbb R^d$.}
 \begin{remark}\label{rem:0in}
     In the derivation of the rest of the paper, we will assume {without loss of any generality} $0\in\cV$, since this can help avoid introducing more notations. 
 \end{remark}

	\section{Simple case analysis}\label{sec:1}
In this section, we  assume $\cV$ is convex, { more precisely we consider the case of $f$ fulfilling Assumption \ref{asp:33}}. We  set $\kappa=0$~(the analysis of $\kappa>0$ is similar) in \eqref{eq:n1} and \eqref{eq:n3}, and for simplicity, we  omit  $\kappa$ in the notations $V_t^\kappa,\rho_t^\kappa$ in this section, that is 
	\begin{equation}\label{eq:SDEsimple}
		dV_t=-\lambda(V_t-v_{\alpha}(\rho_t,V_t))dt+\sigma\normm{V_t-v_{\alpha}(\rho_t,V_t)}dB_t,
	\end{equation}
	where $\rho_t$ is the distribition of $V_t$ and \begin{equation}
		v_{\alpha}(\rho_t,V_t):=\frac{\int w e^{-\alpha A(w,V_t)}d\rho_t\revised{(w)}}{\int e^{-\alpha A(w,V_t)}d\rho_t\revised{(w)}},
	\end{equation}
	and its related Fokker--Planck equation is
	\begin{equation}\label{eq:FPsimple}
		{\partial_ t}\rho_t=\lambda\operatorname{div}\left(\left(v-v_{\alpha}(\rho_t,v)\right)\rho_t\right)+\frac{\sigma^2}{2}\Delta\left(\normmsq{v-v_{\alpha}(\rho_t,v)}\rho_t\right).
	\end{equation}
	 In this section, we will first show that \eqref{eq:SDEsimple} and \eqref{eq:FPsimple} have a solution in Theorem \ref{prop:22}, then the main convergence result of this section will be presented in Theorem \ref{prop:33}.
	
	\subsection{Existence of solutions in the simple case}
	
	The next lemma shows that $V^*(v)$ \revised{is close to the global minimizer} of the function $A(\cdot,v)$.
	\begin{lemma}\label{lem:111}
		Under Assumption \ref{asp:11} and Assumption \ref{asp:33} , let $\Cone\geq -\underbar{f}$, then we have
		\begin{equation}\label{eq:4}
			A(w,v)-A(V^*(v),v)\geq \normmsq{w-V^*(v)}\revised{-2(\frac{2\varkappa}{\sml^2})^{\frac{1}{p-1}}}, \quad \text{ for all } w,v\in\mathbb{R}^d.
		\end{equation}
	\end{lemma}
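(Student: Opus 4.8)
The plan is to expand the definition \eqref{eq:defA} of $A$ and estimate the difference $A(w,v)-A(V^*(v),v)$ term by term. Writing both terms out,
\begin{equation*}
A(w,v)-A(V^*(v),v) = \frac{1}{\varkappa}\bigl(f(w)-f(V^*(v))\bigr)(f(v)+\Cone) + \normmsq{v-w}-\normmsq{v-V^*(v)}.
\end{equation*}
Since $V^*(v)\in\cV$ is a global minimizer, $f(V^*(v))=\underbar{f}$, so the first summand equals $\tfrac{1}{\varkappa}(f(w)-\underbar{f})(f(v)+\Cone)$. The key sign observation is that, because $\Cone\geq-\underbar{f}$, we have $f(v)+\Cone\geq f(v)-\underbar{f}\geq 0$ for all $v$ by Assumption \ref{asp:11} (the right side is $\geq \sml\,\dist{v,\cV}^p\geq 0$), and also $f(w)-\underbar{f}\geq \sml\,\dist{w,\cV}^p\geq 0$; hence the first summand is nonnegative. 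For the quadratic part, the idea is to use convexity of $\cV$: by the projection characterization, $V^*(v)$ is the metric projection of $v$ onto the closed convex set $\cV$, so $\langle v-V^*(v),\,w'-V^*(v)\rangle\leq 0$ for every $w'\in\cV$. One then does not get a one-line bound directly since $w$ is arbitrary, so instead I would use the elementary identity $\normmsq{v-w}-\normmsq{v-V^*(v)} = \normmsq{w-V^*(v)} + 2\langle V^*(v)-w,\, v-V^*(v)\rangle$ and bound the cross term.

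The heart of the argument is to control the cross term $2\langle V^*(v)-w,\,v-V^*(v)\rangle$ against the (nonnegative) first summand $\tfrac{1}{\varkappa}(f(w)-\underbar{f})(f(v)+\Cone)$. Split into two cases according to whether $\dist{w,\cV}$ is small or large. If $w$ is far from $\cV$, then $f(w)-\underbar{f}\geq \sml\,\dist{w,\cV}^p$ is large and absorbs the cross term, which grows only linearly in $\normm{w-V^*(v)}=\dist{w,\cV}$; more precisely one combines $\tfrac{1}{\varkappa}\sml\,\dist{w,\cV}^p\cdot(f(v)+\Cone)$ with the bound $|2\langle V^*(v)-w, v-V^*(v)\rangle|\leq 2\dist{w,\cV}\,\normm{v-V^*(v)}$ — but notice here we would also need to control $\normm{v-V^*(v)}=\dist{v,\cV}$, which is itself bounded by $(\tfrac{1}{\sml}(f(v)-\underbar{f}))^{1/p}\leq(\tfrac{1}{\sml}(f(v)+\Cone))^{1/p}$, so the factor $f(v)+\Cone$ reappears and the two $f(v)+\Cone$ factors work in our favor. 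If instead $w$ is close to $\cV$, then $\dist{w,\cV}$ is small, the "lost" quantity $\normmsq{w-V^*(v)}-\text{(cross term correction)}$ is already close to $\normmsq{w-V^*(v)}$, and the uniform additive slack $-2(\tfrac{2\varkappa}{\sml^2})^{1/(p-1)}$ on the right-hand side of \eqref{eq:4} is designed precisely to absorb the residual. I would optimize the case split threshold in $\dist{w,\cV}$ to make these two estimates match, which is exactly what produces the exponent $\tfrac{1}{p-1}$ and the constant $(\tfrac{2\varkappa}{\sml^2})^{1/(p-1)}$: one is balancing a term like $c_1\varkappa^{-1}t^p$ against a term like $c_2 t$ (with $t=\dist{w,\cV}$), whose crossover is at $t\sim(\varkappa)^{1/(p-1)}$ up to constants.

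The main obstacle I anticipate is the interaction between the two occurrences of the position-dependent weight $f(v)+\Cone$ and the need to keep the final bound independent of $v$: the cross term involves $\normm{v-V^*(v)}$ which is unbounded in $v$, and it must be dominated using the quadratic growth from Assumption \ref{asp:22} together with the very factor $f(v)+\Cone$ that multiplies the good term. Getting the Young-type inequality right — choosing how much of $\tfrac{1}{\varkappa}(f(w)-\underbar{f})(f(v)+\Cone)$ to spend against $2\dist{w,\cV}\,\normm{v-V^*(v)}$ versus against the purely $w$-dependent leftover — is the delicate bookkeeping step, and it is where the hypothesis $p\in[1,2]$ (in particular $p>1$ for the exponent $\tfrac{1}{p-1}$ to make sense) enters essentially. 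Once that balancing is done, \eqref{eq:4} follows by collecting terms; the convexity of $\cV$ is used only through the elementary projection identity and is otherwise not a source of difficulty in this lemma.
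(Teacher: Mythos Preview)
Your strategy is right and close to the paper's, but there is a genuine error in the detail that you should fix before it propagates.

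You write $\normm{w-V^*(v)}=\dist{w,\cV}$ and, relatedly, claim the two-sided bound $|2\langle V^*(v)-w,\,v-V^*(v)\rangle|\leq 2\,\dist{w,\cV}\,\normm{v-V^*(v)}$. Both are false in general: $\dist{w,\cV}=\normm{w-V^*(w)}$, not $\normm{w-V^*(v)}$, and Cauchy--Schwarz only gives $2\normm{w-V^*(v)}\,\normm{v-V^*(v)}$ as an upper bound for the absolute value. Since $\normm{w-V^*(v)}$ can be much larger than $\dist{w,\cV}$ (take $w$ near a point of $\cV$ far from $V^*(v)$), this gap matters. The fix is to use the projection inequality you already stated, but as a \emph{one-sided} bound: with $w'=V^*(w)\in\cV$ one has $\langle v-V^*(v),\,V^*(w)-V^*(v)\rangle\leq 0$, hence
\[
2\langle V^*(v)-w,\,v-V^*(v)\rangle
=2\underbrace{\langle V^*(v)-V^*(w),\,v-V^*(v)\rangle}_{\geq 0}
+2\langle V^*(w)-w,\,v-V^*(v)\rangle
\geq -2\,\dist{w,\cV}\,\dist{v,\cV}.
\]
With this one-sided estimate and Assumption~\ref{asp:11} applied to both $f(w)-\underbar f$ and $f(v)+\Cone\geq f(v)-\underbar f$, setting $s=\dist{w,\cV}\,\dist{v,\cV}$ reduces the balancing to $\tfrac{\sml^2}{\varkappa}s^{p}-2s\geq -2(2\varkappa/\sml^2)^{1/(p-1)}$, exactly your threshold computation. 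So your plan works once the sign issue is repaired. Two smaller remarks: Assumption~\ref{asp:22} is not used anywhere in this lemma (only Assumptions~\ref{asp:11} and~\ref{asp:33}), and the constraint $p\leq 2$ plays no role here either.

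For comparison, the paper's proof is the same argument dressed in coordinates: it fixes an orthonormal basis with $e_d$ along $v-V^*(v)$, writes the displacement as $w-V^*(v)=\sum_i w_i e_i$, and observes that the cross term equals $-2\normm{v-V^*(v)}\,w_d$. Convexity of $\cV$ then gives $\dist{w,\cV}\geq w_d$ when $w_d\geq 0$ (since $\cV$ lies in the half-space $\{x:\langle x-V^*(v),e_d\rangle\leq 0\}$), and when $w_d<0$ the cross term has the favorable sign and the first summand is simply dropped. Your coordinate-free version via the projection splitting is arguably cleaner, but the content is the same.
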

	\begin{proof}
		For simplicity, we  denote $a:=V^*(v)$.
		Let $\left\{e_i\right\}_{i=1}^d$ be \revised{an orthonormal basis} of $\mathbb{R}^d$, we will assume $v-V^*(v)=te_d$ for some $t\geq 0$ and denote $w=\sum_{i=1}^dw_ie_i$ for any $w\in\mathbb{R}^d$, so we know $\inner{v-V^*(v)}{w}=(v_d-V_d^*(v))w_d=tw_d$.
		When $w_d\geq 0$, we have
		\begin{equation}\label{ieq:29}
			\begin{aligned}
				&A(a+w,v)-A(a,v)\\
				&=\frac{1}{\varkappa}(f(a+w)-f(a))(f(v)+\Cone)+\normmsq{a+w-v}-\normmsq{a-v}\\
				&\geq \revised{\frac{1}{\varkappa}(\sml\dist{a+w,\cV}^p)(\sml\normm{v-a}^p+\Cone+\underbar{f})+\normmsq{w}-2|a_d-v_d|w_d}\\
				&\geq \revised{\normmsq{w}+\frac{\sml^2}{\varkappa}\normm{v-a}^pw_d^p-2|a_d-v_d|w_d}\\
				&\geq \revised{\normmsq{w}-2(\frac{2\varkappa}{\sml^2})^{\frac{1}{p-1}},}
			\end{aligned}
		\end{equation}
  in the above, the second inequality is due to $\dist{a+w,\cV}\geq w_d$ which can be derived by the convexity of $\cV$, the choice of $e_d$ and $\Cone+\underbar{f}\geq 0$; the last inequality is due to the following argument: \revised{when $\normm{a-v}w_d\leq ({2\varkappa}/{\sml^2})^{1/(p-1)}$, we have \begin{equation}
      \frac{\sml^2}{\varkappa}\normm{v-a}^pw_d^p-2\normm{a-v}w_d\geq -2\normm{a-v}w_d\geq -2(\frac{2\varkappa}{\sml^2})^{\frac{1}{p-1}};
  \end{equation} when $\normm{a-v}w_d> ({2\varkappa}/{\sml^2})^{1/(p-1)},$ 
		we have \begin{equation}
		    \frac{\sml^2}{\varkappa}\normm{v-a}^pw_d^p-2\normm{a-v}w_d = \normm{a-v}w_d \left (\frac{\sml^2}{\varkappa}\normm{v-a}^{p-1}w_d^{p-1}-2 \right ) > 0.
		\end{equation}}

		When $w_d<0$, since we already have $f(a+w)-f(a)\geq 0$, so we obtain
		\begin{equation}\label{eq:p3p3}
			\begin{aligned}
				A(a+w,v)-A(a,v)&\geq  \normmsq{a+w-v}-\normmsq{a-v}\\
				&=\normmsq{w}-2|a_d-v_d|w_d\\
				&\geq \normmsq{w}.
			\end{aligned}
		\end{equation}
		So all in all, for any $w\in \mathbb{R}^d$, we have
		\begin{equation}
			A(w,v)-A(a,v)\geq \normmsq{w-a}-2(\frac{2\varkappa}{\sml^2})^{\frac{1}{p-1}}.
		\end{equation}
	\end{proof}
By the above lemma, the global minimizer of $A(\cdot,v)$ is \revised{in $B_{r}(V^*(v))$ with $r=\sqrt{2(2\varkappa/\sml^2)^{1/(p-1)}}$, so to have $r$ small, we need to set $\varkappa$ small depending on $\sml$.}

 The next lemma shows the { Lipschitz continuity} of \revised{the} map $v_{\alpha}(\rho,\cdot)$.
	\begin{lemma}\label{lem:2}
		Under Assumption \ref{asp:22} and \revised{assuming that} $\int \normm{w}^4d\rho(w)\leq K$, we have 
		\begin{equation}
			\normm{v_{\alpha}(\rho,v)-v_{\alpha}(\rho,w)}\leq \Cfour(\alpha,R,K,L,\varkappa)\normm{v-w},
		\end{equation}
		for any $v,w\in B_R(0)$, where $\Cfour(\alpha,R,K,L,\varkappa)$ means a constant \revised{that} only depends on $\alpha,R,K,L,\varkappa$.
	\end{lemma}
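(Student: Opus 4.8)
I would treat $v_{\alpha}(\rho,\cdot)$ as a ratio of two integrals and bound the difference by the quotient rule together with the elementary inequality $|e^{-a}-e^{-b}|\le|a-b|\,e^{-\min\{a,b\}}$. Write $\eta(u,v):=e^{-\alpha A(u,v)}$, so that $v_{\alpha}(\rho,v)=\big(\int u\,\eta(u,v)\,d\rho(u)\big)\big/\big(\int\eta(u,v)\,d\rho(u)\big)$. First I would record uniform two‑sided control of $\eta$. Here one uses that $\Cone\geq-\underbar f$ (the standing choice, as in Lemma \ref{lem:111}), so that $f(v)+\Cone\geq 0$; since moreover $f(u)\geq\underbar f$ and, by Assumption \ref{asp:22}, $f(v)+\Cone$ is bounded on $B_R(0)$, one gets $A(u,v)=\tfrac1\varkappa f(u)(f(v)+\Cone)+\normmsq{v-u}\geq -c_0$ for all $u$ and all $v\in B_R(0)$, with $c_0=c_0(R,L,\varkappa,\Cone,\underbar f)$, hence $\eta(u,v)\leq e^{\alpha c_0}$. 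For a matching lower bound on the denominators I would localize: with $R':=(2K)^{1/4}$, Markov's inequality and $\int\normm{w}^4d\rho(w)\leq K$ give $\rho(B_{R'}(0))\geq\tfrac12$, while \eqref{eq:gr} bounds $A(u,v)\leq M$ on $B_{R'}(0)\times B_R(0)$ for some $M=M(R,K,L,\varkappa,\Cone,\underbar f)$; therefore $\int\eta(u,v)\,d\rho(u)\geq\tfrac12 e^{-\alpha M}=:\delta>0$ uniformly for $v\in B_R(0)$.

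Next I would show the exponent is Lipschitz in $v$ with a polynomially weighted constant. For $v,w\in B_R(0)$ one has $A(u,v)-A(u,w)=\tfrac1\varkappa f(u)\big(f(v)-f(w)\big)+\big(\normmsq{v-u}-\normmsq{w-u}\big)$; bounding $|f(v)-f(w)|\leq L(2R+1)\normm{v-w}$ by \eqref{eq:lip}, $|f(u)|\leq L(1+\normmsq u)+|\underbar f|$ by \eqref{eq:gr}, and $|\normmsq{v-u}-\normmsq{w-u}|=|\inner{v-w}{v+w-2u}|\leq(2R+2\normm u)\normm{v-w}$, one obtains $|A(u,v)-A(u,w)|\leq C_*(1+\normmsq u)\normm{v-w}$ with $C_*=C_*(R,L,\varkappa,\underbar f)$, and hence $|\eta(u,v)-\eta(u,w)|\leq\alpha e^{\alpha c_0}C_*(1+\normmsq u)\normm{v-w}$ by the exponential inequality above.

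Finally I would assemble the estimate. Using the identity
\[
v_{\alpha}(\rho,v)-v_{\alpha}(\rho,w)=\frac{\int u\,(\eta(u,v)-\eta(u,w))\,d\rho(u)}{\int\eta(u,v)\,d\rho(u)}+\Big(\int u\,\eta(u,w)\,d\rho(u)\Big)\Big(\frac{1}{\int\eta(u,v)\,d\rho(u)}-\frac{1}{\int\eta(u,w)\,d\rho(u)}\Big),
\]
I bound both denominators from below by $\delta$; for the first numerator I use $\int(\normm u+\normm u^3)\,d\rho(u)\leq 2(1+K)$ (a consequence of $\int\normm w^4d\rho\leq K$), giving a bound $\leq \alpha e^{\alpha c_0}C_*\,2(1+K)\,\normm{v-w}$; for the second term I bound $\normm{\int u\,\eta(u,w)\,d\rho(u)}\leq e^{\alpha c_0}(1+K)$ and $\big|\int\eta(u,v)\,d\rho-\int\eta(u,w)\,d\rho\big|\leq\alpha e^{\alpha c_0}C_*(1+K)\,\normm{v-w}$, and divide by $\delta^2$. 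Collecting all constants yields $\normm{v_{\alpha}(\rho,v)-v_{\alpha}(\rho,w)}\leq\Cfour(\alpha,R,K,L,\varkappa)\normm{v-w}$, the constant also silently depending on the $f$‑determined quantities $\Cone,\underbar f$.

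I expect the main obstacle to be the uniform lower bound $\delta$ on $\int\eta(u,v)\,d\rho(u)$ over $v\in B_R(0)$: this is precisely where the fourth‑moment hypothesis is essential — it keeps a fixed positive amount of mass of $\rho$ inside a ball on which $A$ is bounded above — and where one must exploit that, even though $f(u)$ is unbounded in $u$, its lower bound $\underbar f$ together with $\Cone\geq-\underbar f$ already forces $A(u,v)$ (hence $\eta(u,v)$) to be bounded above uniformly in $v\in B_R(0)$. Everything else is routine bookkeeping with the moment bound.
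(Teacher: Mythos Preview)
Your argument is correct and follows the same overall architecture as the paper's proof: the same quotient-rule decomposition of $v_{\alpha}(\rho,v)-v_{\alpha}(\rho,w)$, the same Lipschitz bound $|A(u,v)-A(u,w)|\leq C(1+\normmsq{u})\normm{v-w}$ on $B_R(0)$, the same exponential inequality $|e^{-a}-e^{-b}|\leq \max\{e^{-a},e^{-b}\}\,|a-b|$, and the same use of the fourth-moment hypothesis to close the estimate.

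The one genuine difference is in how you control the denominators from below. The paper first normalizes to $\underbar f=0$ (noting that shifting $f$ by a constant leaves $v_\alpha(\rho,v)$ unchanged), so that $A(u,v)\geq 0$ and hence $\max\{e^{-\alpha A(u,v)},e^{-\alpha A(u,w)}\}\leq 1$; it then applies Jensen's inequality to the convex function $x\mapsto e^{-x}$ to get $\big(\int e^{-\alpha A(u,v)}d\rho(u)\big)^{-1}\leq e^{\alpha\int A(u,v)\,d\rho(u)}$, which is bounded via Assumption~\ref{asp:22} and the moment hypothesis. Your route instead bounds $A$ from below by $-c_0$ (to cap $\eta$) and uses Markov's inequality to trap a fixed fraction of the mass of $\rho$ in a ball on which $A$ is bounded above. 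Both work; the paper's Jensen argument is a bit slicker and avoids the explicit localization, while your approach makes the dependence on $K$ more transparent and does not require the preliminary normalization of $f$.
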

	\begin{proof}
		Without loss of generality, we assume $\underbar{f}=0$~(since replacing $f(u)$ by $f(u)-\underbar{f}$ in $A(u,v)$ won't change $v_{\alpha}(\rho,v)$) and $A(u,v):=\frac{1}{\varkappa}f(u)(f(v)+\Cone)+\normmsq{v-u}$ \revised{to be} non-negative. We have 
		\begin{equation}\label{eq:1414}
			\begin{aligned}
				&\normm{v_{\alpha}(\rho,v)-v_{\alpha}(\rho,w)}\\
				&=\normm{\frac{\int u e^{-\alpha A(u,v)}d\rho(u)}{\int e^{-\alpha A(u,v)}d\rho(u)}-\frac{\int u e^{-\alpha A(u,w)}d\rho(u)}{\int e^{-\alpha A(u,w)}d\rho(u)}}\\
				&\leq \frac{1}{{\int e^{-\alpha A(u,v)}d\rho(u)}}\normm{{\int u e^{-\alpha A(u,v)}d\rho(u)}-{\int u e^{-\alpha A(u,w)}d\rho(u)}}\\
				&\quad +\frac{1}{{\int e^{-\alpha A(u,v)}d\rho(u)}{\int e^{-\alpha A(u,w)}d\rho(u)}}\\
				&\revised{\quad\times\normm{{\int u e^{-\alpha A(u,w)}d\rho(u)}\left[{\int e^{-\alpha A(u,v)}d\rho(u)}-{\int e^{-\alpha A(u,w)}d\rho(u)}\right]}}.
			\end{aligned}
		\end{equation}
		Since 
		\begin{equation}\label{eq:1515}
			\frac{1}{{\int e^{-\alpha A(u,v)}d\rho(u)}}\leq e^{\alpha\int A(u,v)d\rho(u)}\leq \Ctwo(\alpha,R,K,\revised{L},\varkappa)
		\end{equation}
		and 
		\begin{equation}\label{eq:1616}
			\begin{aligned}
				|{e^{-\alpha A(u,v)}-e^{-\alpha A(u,w)}}|&\leq \max\left\{e^{-\alpha A(u,v)},e^{-\alpha A(u,\revised{w})}\right\}\alpha |{A(u,v)-A(u,w)}|\\
				&\leq \Cthree(\alpha,R,L,\varkappa)(1+\normm{u}^{2})\normm{v-w},
			\end{aligned}
		\end{equation}
		in \eqref{eq:1616}, the first inequality is due to $|e^x-e^y|\leq \max\{e^x,e^y\}\revised{|x-y|},$
		\revised{ the second one is due to \begin{equation}
		\begin{aligned}
		|A(u,v)-A(u,w)|&= \frac{1}{\varkappa}f(u)\left(f(v)-f(w)\right)+\inner{2u-v-w}{w-v}\\
		&\leq \frac{L^2}{\varkappa}(1+\normmsq{u})(\normm{v}+\normm{w}+1)\normm{v-w}+\normm{2u-v-w}\normm{w-v}\\
		&\leq \Cthree(\alpha,R,L,\varkappa)(1+\normmsq{u})\normm{v-w}.
		\end{aligned}
		\end{equation}}
  
So, combining \eqref{eq:1414}, \eqref{eq:1515}, \eqref{eq:1616}, and the assumption that $\int\normm{w}^4d\rho(w)\leq K$, we finally have
		\begin{equation}
			\normm{v_{\alpha}(\rho,v)-v_{\alpha}(\rho,w)}\leq \Cfour(\alpha,R,K,L,\varkappa)\normm{v-w}.
		\end{equation}
		
	\end{proof}

	To prove the existence of a strong solution of \revised{the} SDE \eqref{eq:SDEsimple}, {we are going to use an approximation procedure based on suitable projections onto compact sets. To do this, we first define the projection map onto a centered ball of radius $R>0$} as
	\begin{equation}
		\cP\left(v\right):=\begin{cases}
			v, & \text{if $\normm{v}\leq R,$}\\
			R\frac{v}{\normm{v}}, & \text{if $\normm{v}>R.$}
		\end{cases}
	\end{equation}
	With this projection map, we can prove the following proposition.
	\begin{proposition}\label{prop:11}Under Assumption \ref{asp:22}, 
		the following SDE
		\begin{equation}\label{eq:1919}
			dV^R_t=-\lambda(V^R_t-v_{\alpha}(\rho^R_t,\cP(V^R_t))dt+\sigma\normmsq{V^R_t-v_{\alpha}(\rho^R_t,\cP(V^R_t)}dB_t,\quad R\in (0,\infty),
 		\end{equation}
		where $\rho_t^R$ is the distribution of $V_t^R$ and $\rho_0^R=\rho_0\in\mathcal{P}_4(\mathbb{R}^d)$, has a unique strong solution and $\rho_t^R$ satisfies the following Fokker--Planck equation in the weak sense,
		\begin{equation}\label{eq:projFP}
			{\partial_ t}\rho_t^R=\lambda\operatorname{div}\left(\left(v-v_{\alpha}(\rho_t^R,\cP(v))\right)\rho_t^R\right)+\frac{\sigma^2}{2}\Delta\left(\normm{v-v_{\alpha}(\rho_t^R,\cP(v))}\rho_t^R\right).
		\end{equation}
	\end{proposition}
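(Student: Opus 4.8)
## Proof Plan for Proposition \ref{prop:11}

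The plan is to establish existence and uniqueness of a strong solution to the regularized SDE \eqref{eq:1919} via a fixed-point argument on the space of probability measures with bounded fourth moments, and then to derive the weak Fokker--Planck equation \eqref{eq:projFP} via Itô's formula. First I would set up the iteration: given a candidate flow $t\mapsto \mu_t \in \mathcal{P}_4(\mathbb{R}^d)$ (continuous in some Wasserstein-type metric, with uniformly bounded fourth moments on $[0,T^*]$), consider the \emph{linear} SDE obtained by freezing the law in the coefficients, i.e.\ replace $v_\alpha(\rho^R_t, \cP(\cdot))$ by $v_\alpha(\mu_t, \cP(\cdot))$. The crucial observation is that, thanks to the projection $\cP$, the map $v \mapsto v_\alpha(\mu_t, \cP(v))$ is \emph{bounded} (its values always lie in $B_R(0)$, since $v_\alpha$ is an average of points with the weights $e^{-\alpha A}$ and the numerator/denominator structure keeps it in the convex hull of $\supp\mu_t$ --- actually one only needs that the coefficients are globally Lipschitz and of at most linear growth in $V^R_t$). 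Hence the drift $b(t,x) = -\lambda(x - v_\alpha(\mu_t,\cP(x)))$ and diffusion $\varsigma(t,x) = \sigma\|x - v_\alpha(\mu_t,\cP(x))\|$ are globally Lipschitz in $x$ (uniformly in $t$, using Lemma \ref{lem:2} together with the $1$-Lipschitz continuity of $\cP$ and of $x\mapsto x$) and of linear growth. By the standard Itô theory (e.g.\ \cite{platen1999introduction}) this linear SDE has a unique strong solution, and by Grönwall its fourth moment is bounded on $[0,T^*]$ by a constant depending only on $R$, the fourth moment of $\rho_0$, and $\lambda,\sigma,T^*$ --- \emph{not} on $\mu$. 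This gives a self-map $\Phi$ on the complete metric space of measure-flows with fourth moment bounded by that fixed constant $K$.

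Next I would show $\Phi$ is a contraction (on short time, then iterate, or directly with an exponential weight). Take two input flows $\mu^1, \mu^2$; let $V^1, V^2$ be the corresponding solutions driven by the \emph{same} Brownian motion with the same initial datum. Writing the difference and applying Itô, one estimates $\mathbb{E}\|V^1_t - V^2_t\|^2$ by a term coming from the Lipschitz dependence of the coefficients on $x$ (handled by Grönwall) plus a term measuring $\|v_\alpha(\mu^1_s,\cP(x)) - v_\alpha(\mu^2_s,\cP(x))\|$; the latter is controlled by a stability-in-measure estimate for $v_\alpha(\cdot, \cdot)$, which follows from the explicit ratio formula for $v_\alpha$ and the bound $\int \|w\|^4 d\mu^i \le K$ --- this is essentially the computation in the proof of Lemma \ref{lem:2} but differentiating in the measure slot rather than the space slot, and it yields a bound of the form $C(\alpha,R,K,L,\varkappa) \, d(\mu^1_s,\mu^2_s)$ for a suitable Wasserstein-$2$ (or bounded-Lipschitz) distance. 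Since $W_2(\operatorname{Law}(V^1_t),\operatorname{Law}(V^2_t))^2 \le \mathbb{E}\|V^1_t - V^2_t\|^2$, one obtains the contraction after choosing $T^*$ small (or using a $\sup_t e^{-\beta t}$-weighted metric with $\beta$ large, to get it on all of $[0,T^*]$ at once). The fixed point $\rho^R_t = \operatorname{Law}(V^R_t)$ is then the law of the unique strong solution of \eqref{eq:1919}; uniqueness in law plus pathwise uniqueness (again from global Lipschitz coefficients once the law is fixed) upgrades this to a genuine unique strong solution.

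Finally, the Fokker--Planck equation \eqref{eq:projFP} is obtained in the standard way: for $\varphi \in \mathcal{C}_c^\infty(\mathbb{R}^d)$ (or $\mathcal{C}_b^2$), apply Itô's formula to $\varphi(V^R_t)$, take expectations (the martingale part vanishes because $\varphi$ and its derivatives are bounded and the coefficients have at most linear growth with finite second moments), and read off
\begin{equation*}
\frac{d}{dt}\int \varphi \, d\rho^R_t = \int \Big[ -\lambda \big(v - v_\alpha(\rho^R_t,\cP(v))\big)\cdot \nabla\varphi + \frac{\sigma^2}{2}\|v - v_\alpha(\rho^R_t,\cP(v))\|^2 \Delta\varphi \Big] d\rho^R_t(v),
\end{equation*}
which is exactly the weak formulation of \eqref{eq:projFP} (here one checks the coefficient in the Laplacian term is $\|v-v_\alpha\|^2$, so the statement's $\|v - v_\alpha(\rho_t^R,\cP(v))\|$ should read $\|v - v_\alpha(\rho_t^R,\cP(v))\|^2$).

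\textbf{Main obstacle.} The routine parts are the Itô well-posedness of the frozen-coefficient SDE and the Itô derivation of the PDE; the genuine work is the \emph{stability of $v_\alpha$ with respect to the measure argument} in a metric compatible with the fixed-point space --- i.e.\ proving $\|v_\alpha(\mu^1,\cP(v)) - v_\alpha(\mu^2,\cP(v))\| \le C(\alpha,R,K,L,\varkappa)\, W_2(\mu^1,\mu^2)$ uniformly in $v$. One must carefully handle the ratio (numerator over denominator), bound the denominator from below uniformly (using that $A(u,v)$ grows at most quadratically in $u$ on the relevant set so $\int e^{-\alpha A}\,d\mu \ge e^{-\alpha \int A\,d\mu}$ by Jensen, which needs the fourth-moment bound $K$), and control the numerator; the at-most-quadratic growth of $f$ from Assumption \ref{asp:22} together with $\|w\|^4 \in L^1(\mu^i)$ is exactly what makes all the arising integrals finite and the Lipschitz-in-$W_2$ bound go through. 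A secondary technical point is ensuring all constants are uniform over the ball $\{\mu : \int\|w\|^4 d\mu \le K\}$ so that $\Phi$ genuinely maps this set to itself before contracting.
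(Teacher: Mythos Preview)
Your approach is correct but takes a genuinely different route from the paper. The paper follows \cite[Theorem 3.1]{carrillo2018analytical} and applies the Leray--Schauder fixed point theorem to the map
\[
\cT_R:\cC([0,T^*]\times\overline{B_R(0)},\mathbb{R}^d)\to \cC([0,T^*]\times\overline{B_R(0)},\mathbb{R}^d),\qquad u\mapsto v_{\alpha}(\rho_{\cdot}^{R,u},\cdot),
\]
i.e.\ the fixed point is sought in the space of \emph{consensus-point functions} rather than in a space of measure-valued flows. Compactness of $\cT_R$ is obtained from $\tfrac{1}{2}$-H\"older continuity in time (via It\^o isometry and a $W_2$-stability estimate from \cite[Lemma 3.2]{carrillo2018analytical}) together with Lipschitz continuity in space (Lemma \ref{lem:2}), and then the compact embedding $\mathcal{C}^{0,1/2}\hookrightarrow\mathcal{C}$. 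The a priori bound on the set $\{u: u=r\cT_R u,\ r\in[0,1]\}$ comes from the moment estimate \eqref{eq:2626} (via \cite[Lemma 3.3]{carrillo2018analytical}); uniqueness is treated separately by referring to the stability argument in step four of \cite[Theorem 3.1]{carrillo2018analytical}.

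Your Banach contraction argument on $\mathcal{P}_4$-valued flows is the more classical Sznitman-style route. It is slightly more elementary in that it avoids the compact embedding machinery and delivers existence and uniqueness in one stroke; the price is that you must prove the Lipschitz-in-$W_2$ stability of $\rho\mapsto v_\alpha(\rho,\cP(v))$ uniformly over $\{\rho:\int\|w\|^4d\rho\le K\}$ (the paper does cite and use exactly this estimate later, in \eqref{wasscntr}, so it is available). One small caution: your claim that the fourth-moment bound on the frozen-coefficient solution is independent of $\mu$ is only true \emph{within} the ball $\{\int\|w\|^4d\mu\le K\}$, since $\|v_\alpha(\mu,\cP(v))\|$ is controlled by the second moment of $\mu$; closing the self-map property therefore requires either the short-time-and-iterate step you mention, or choosing $K$ consistently via the a priori estimate analogous to \eqref{eq:2626}. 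Also, your parenthetical that $v_\alpha(\mu_t,\cP(v))$ lies in $B_R(0)$ is not correct (the average is over $\supp\mu_t$, not over $B_R(0)$), but you immediately retreat to the correct linear-growth statement, so this does not affect the argument.
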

	
	\begin{proof}
		The proof is an adaptation from the proof of \cite[Theorem 3.1]{carrillo2018analytical} and{ it is based on the application of Leray--Schauder fixed point theorem}; we only provide a sketch here. We first define the mapping 
		\begin{equation}\label{eq:2121}
			\cT_R:\cC([0,T^*]\times\overline{B_R(0)},\mathbb{R}^d)\to \cC([0,T^*]\times\overline{B_R(0)},\mathbb{R}^d);\quad \revised{u\mapsto v_{\alpha}(\rho_{\cdot}^{R,u},\cdot)},~v\in B_R(0),
		\end{equation}
		where $\rho_t^{R,u}$ is the distribution of $V_t^{R,u}$, which satisfies the following SDE
		\begin{equation}
			dV^{R,u}_t=-\lambda(V^R_t-u(t,\cP(V_t^{R,u})))dt+\sigma\normm{V^R_t-u(t,\cP(V_t^{R,u}))}dB_t.
		\end{equation}
		{ Below we intend to show that  $\cT_R$ has a fixed point, hence the existence of solutions for \eqref{eq:1919} and \eqref{eq:projFP}.
		For that, we \revised{first show} that $\cT_R$ is a compact mapping.} 
		By standard arguments~(see \cite[Chapter 7]{arnold1974stochastic}),  for any $p\geq 1$, we have
		\begin{equation}
			\Ep{\normm{V_t^{R,u}}^p}
			\leq \Cfive(1+\Ep{\normm{V_0^{R,u}}^p}),
		\end{equation}
		for some constant $\Cfive>0$ \revised{depending} on $p,t,\lambda,\sigma,d,\norm{u}_{\infty}$. By It\^o isometry,  for any $t,s\in [0,T^*]$, we have
		\begin{equation}
			\Ep{\normmsq{V_t^{R,u}-V_s^{R,u}}}\leq \Csix\normm{t-s},
		\end{equation}
		here $\Csix$ depends on $\norm{u}_{\infty},\lambda,\sigma,T^*$, then apply \cite[Lemma 3.2]{carrillo2018analytical}, we have
		\begin{equation}\label{eq:h29}
			\normm{v_{\alpha}(\rho_s^{R,u},\cP(v))-v_{\alpha}(\rho_t^{R,u},\cP(v))}\leq \Cseven W_2(\rho_t^{R,u},\rho_s^{R,u})\leq \Cseven\Csix\normm{t-s}^{\frac{1}{2}},
		\end{equation}
		for some {constant $\Cseven>0$ that depends} on $R,\alpha, L,\varkappa, \Ep{\normm{V_t^{R,u}}^4}$ and $\Ep{\normm{V_s^{R,u}}^4}$, hence we { obtain} the H\"older continuity of $t\to v_{\alpha}(\rho_t^{R,u},\cP(v))$ with exponent $\frac{1}{2}$. Due to Lemma \ref{lem:2},  $v\mapsto v_{\alpha}(\rho_t^{R,u},\cP(v))$ is Lipschitz continuous~(and thus is also H\"older continuous with exponent $\frac{1}{2}$), and so the compactness of $\cT_R$ can be provided by the compact embedding  $\mathcal{C}^{0,1 / 2}\left([0, T^*]\times\overline{B_R(0)}, \mathbb{R}^d\right) \hookrightarrow$ $\mathcal{C}\left([0, T^*]\times\overline{B_R(0)}, \mathbb{R}^d\right).$
		
		Then, we  show that the set $\Gamma:=\left\{u\in \mathcal{C}([0,T^*]\times \overline{B_R(0)},\mathbb{R}^d): \revised{u=r \cT_Ru,r \in [0,1]}\right\}$ is bounded, that is, for each $u$ in this set, we have $\norm{u}_{\infty}\leq C$, for some uniform constant $0<C<\infty$; { then by the Leray--Schauder fixed point theorem, we can conclude the existence part of this proof}. Towards this,  when $v\in B_R(0)$, \revised{the} function $A(\cdot,v):=\frac{1}{\varkappa}f(\cdot)(f(v)+\Cone)+\normmsq{\cdot-v}$ satisfies 
		\begin{itemize}
			\item $A(u,v)-A(V^*(v),v)\geq \normmsq{u-V^*(v)}\geq \frac{1}{10}\normmsq{u}, \quad \text{for}\quad \normm{u}\geq 100\operatorname{diam}(\cV),$
			\item $ A(u,v)-A(V^*(v),v)\leq \frac{1}{\varkappa}[f(u)-f(V^*(u))][f(v)+\Cone]+\normmsq{v-u}-\normmsq{V^*(v)-v} \leq \Ceight(R,L,\varkappa)(1+\normmsq{u}),~ \text{since}~\normm{v}\leq R.$
		\end{itemize}
  In the above, the first condition is due to $0\in\cV$ and Lemma \ref{lem:111}, the second condition is due to Assumption \ref{asp:22}.
		Then by \cite[Lemma 3.3]{carrillo2018analytical}, for the function $A(\cdot,v),v\in B_R(0)$, we have
		\begin{equation}\label{eq:2626}
			\normmsq{v_{\alpha}(\rho_t^R,\cP(v))}\leq \Cnine\Ep{\normmsq{V_t^R}}+\Cten,
		\end{equation}
		with $\Cnine= 20\Ceight(R,L,\varkappa)(1+\frac{10}{\alpha\operatorname{diam}(\cV)^2}),\Cten=10^4\operatorname{diam}(\cV)^2+\Cnine$. Following the third step in the proof of \cite[Theorem 3.2]{carrillo2018analytical},  the boundedness of \revised{the} set $\Gamma$ is proved.
		
		Lastly, the uniqueness is a standard procedure, the reader can refer to the fourth step in the proof of \cite[Theorem 3.1]{carrillo2018analytical}.
	\end{proof}
	{ As we have proved that the SDE \eqref{eq:1919} has a solution by the last proposition, in  what follows we can focus on the   analysis of its large time behavior, by analyzing the asymptotics of $\rho_t^R$.}
	\begin{lemma}\label{lem:66}Under  Assumption \ref{asp:22} and Assumption \ref{asp:33}, we have 
		\begin{equation}\label{eq:2727}
			\begin{aligned}
				\frac{d}{dt}\int\normmsq{v-V^*(v)}d\rho^R_t(v)
				&\leq-(\lambda-2\sigma^2d)\int\normmsq{v-V^*(v)}d\rho^R_t(v)\\
				&\quad +(\lambda+2\sigma^2d)\int\normmsq{V^*(v)-v_{\alpha}(\rho_t^R,\cP(v))}d\rho^R_t(v).
			\end{aligned}
		\end{equation}
	\end{lemma}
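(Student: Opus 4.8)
The plan is to test the weak form of the Fokker--Planck equation~\eqref{eq:projFP} against the function $\phi(v):=\normmsq{v-V^*(v)}=\dist{v,\cV}^2$, and then to bound the resulting drift and diffusion contributions by writing $v-v_\alpha(\rho_t^R,\cP(v))=(v-V^*(v))+(V^*(v)-v_\alpha(\rho_t^R,\cP(v)))$ and applying Young's inequality together with a Hessian bound on $\phi$.

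First I would record the geometric properties of $\phi$ that let it play the role of a $C^2$ test function. Since $\cV$ is closed and convex (Assumption~\ref{asp:33}), the metric projection $V^*(\cdot)$ is single-valued and $1$-Lipschitz, $\phi$ is convex, and $\tfrac12\phi\in C^1(\mathbb{R}^d)$ with $\nabla(\tfrac12\phi)(v)=v-V^*(v)$ on all of $\mathbb{R}^d$; as this gradient map is $1$-Lipschitz, $\phi\in C^{1,1}(\mathbb{R}^d)$ with $0\preceq D^2\phi\preceq 2\,\mathrm{Id}$ a.e., hence $0\le\Delta\phi\le 2d$ a.e. Because $\phi$ is only $C^{1,1}$ and has quadratic growth, I would justify the weak identity by a routine regularization: mollify $\phi$ to $\phi_\varepsilon:=\phi\ast\eta_\varepsilon$ (which preserves $D^2\phi_\varepsilon\preceq 2\,\mathrm{Id}$ and keeps $\nabla\phi_\varepsilon$ uniformly Lipschitz), multiply by a smooth cutoff equal to $1$ on $B_R(0)$ and supported in $B_{2R}(0)$ to obtain an admissible $\mathcal{C}_c^2$ function, apply~\eqref{eq:projFP} against it, and pass to the limit $R\to\infty$ and then $\varepsilon\to 0$. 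This limit is legitimate since the proof of Proposition~\ref{prop:11} already supplies $\sup_{t\in[0,T^*]}\Ep{\normm{V_t^R}^4}<\infty$ and the uniform (in $v$ and $t\in[0,T^*]$) bound $\normmsq{v_\alpha(\rho_t^R,\cP(v))}\le\Cnine\Ep{\normmsq{V_t^R}}+\Cten$ from~\eqref{eq:2626}, which dominate every integrand and make the boundary contributions vanish.

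This yields the identity
\begin{equation*}
\frac{d}{dt}\int\phi\,d\rho_t^R=-\lambda\int\nabla\phi\cdot\bigl(v-v_\alpha(\rho_t^R,\cP(v))\bigr)\,d\rho_t^R+\frac{\sigma^2}{2}\int\Delta\phi\,\normmsq{v-v_\alpha(\rho_t^R,\cP(v))}\,d\rho_t^R .
\end{equation*}
For the drift term I substitute $\nabla\phi=2(v-V^*(v))$ and split as above: the diagonal part contributes $-2\lambda\normmsq{v-V^*(v)}$, while the cross term obeys $-2\lambda\inner{v-V^*(v)}{V^*(v)-v_\alpha(\rho_t^R,\cP(v))}\le\lambda\normmsq{v-V^*(v)}+\lambda\normmsq{V^*(v)-v_\alpha(\rho_t^R,\cP(v))}$, so the drift integrand is at most $-\lambda\normmsq{v-V^*(v)}+\lambda\normmsq{V^*(v)-v_\alpha(\rho_t^R,\cP(v))}$. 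For the diffusion term I use $\Delta\phi\le 2d$ together with $\normmsq{v-v_\alpha}\le 2\normmsq{v-V^*(v)}+2\normmsq{V^*(v)-v_\alpha}$, so that integrand is at most $2\sigma^2 d\,\normmsq{v-V^*(v)}+2\sigma^2 d\,\normmsq{V^*(v)-v_\alpha(\rho_t^R,\cP(v))}$. Adding the two bounds and integrating against $\rho_t^R$ gives exactly~\eqref{eq:2727}.

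The only genuinely delicate point is the admissibility of $\phi=\dist{\cdot,\cV}^2$ as a test function for~\eqref{eq:projFP}: commuting the mollification/truncation limit with $d/dt$ and checking that no term at infinity survives. Once the $C^{1,1}$ regularity and the bound $D^2\phi\preceq 2\,\mathrm{Id}$ are in hand, everything else is elementary algebra with Young's inequality. Note that neither Assumption~\ref{asp:11} nor the adaptive structure built into $A$ in~\eqref{eq:defA} is needed for this lemma.
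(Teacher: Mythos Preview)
Your argument is correct and leads to exactly~\eqref{eq:2727}, but it takes a genuinely different route from the paper's proof in the one nontrivial step, namely the bound $\Delta\phi\le 2d$.

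The paper computes the Hessian of $\phi(v)=\normmsq{v-V^*(v)}$ \emph{explicitly}: it writes $\nabla\phi(v)=2(I-\nabla V^*(v))(v-V^*(v))^\top$, proves the geometric identity $(v-V^*(v))\nabla V^*(v)=0$, and then spends most of the proof showing that $\nabla V^*(v)$ is a \emph{symmetric} matrix via the Gauss map of the smooth boundary $\partial\cV$ (this is where the smooth-boundary part of Assumption~\ref{asp:33} is used). From symmetry and $0\preccurlyeq\nabla V^*\preccurlyeq I$ it then reads off $\Delta\phi=2\operatorname{tr}(I-\nabla V^*)\le 2d$. The drift/diffusion splitting and the final Cauchy--Schwarz/Young step are the same as yours.

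Your route bypasses all of this differential geometry by invoking the standard convex-analysis fact that $\tfrac12\dist{\cdot,\cV}^2$ is $C^{1,1}$ with $\nabla(\tfrac12\phi)=v-V^*(v)$ firmly nonexpansive, hence $0\preceq D^2\phi\preceq 2I$ a.e. This is shorter, does not need the smooth boundary, and the mollification/cutoff you sketch to justify testing~\eqref{eq:projFP} against the unbounded $C^{1,1}$ function $\phi$ is sound given the moment and $v_\alpha$-bounds already available from Proposition~\ref{prop:11} and~\eqref{eq:2626}. What you lose is the explicit formulae $\nabla V^*(v)=(I+\normm{v-V^*(v)}\nabla N(V^*(v)))^{-1}$ and $\Delta\phi=2\operatorname{tr}(I-\nabla V^*(v))$ that the paper develops here and then reuses in Section~\ref{sec:2} (Lemma~\ref{lem:6666}), where integration over the zones $\zonea,\zoneb$ produces boundary terms that have to be controlled via these explicit expressions. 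So the paper's longer argument is partly an investment for later; for the present lemma your approach is a clean and legitimate shortcut.
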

	\begin{proof}
		For ease of notation, we will omit $R$ and $\cP$ in the proof, also  in the proof $v, V^*(v)$ are row vectors.
		Apply It\^o formula to the function $\normmsq{v-V^*(v)}$, we have
		\begin{equation}
			\begin{aligned}
				\frac{d}{dt}\int\normmsq{v-V^*(v)}d\rho_t(v)
				&={-\lambda\int \inner{\nabla \normmsq{v-V^*(v)}}{v-v_{\alpha}(\rho_t,v)}d\rho_t(v)}\\
				&\quad+{\frac{\sigma^2}{2}\int\Delta\normmsq{v-V^*(v)}\normmsq{v-v_{\alpha}(\rho_t,v)}d\rho_t(v)}\\
                & =-2\lambda\int (v-v_{\alpha}(\rho_t,v))(I-\nabla V^*(v))(v-V^*(v))^{\top}d\rho_t(v)\\ &\quad+{\frac{\sigma^2}{2}\int\Delta\normmsq{v-V^*(v)}\normmsq{v-v_{\alpha}(\rho_t,v)}d\rho_t(v)}\\
				&=-2\lambda\underbrace{\int (v-V^*(v))(I-\nabla V^*(v))(v-V^*(v))^{\top}d\rho_t(v)}_{I}\\ &\quad+\frac{\sigma^2}{2}\underbrace{\int\Delta\normmsq{v-V^*(v)}\normmsq{v-v_{\alpha}(\rho_t,v)}d\rho_t(v)}_{II}\\
				&\quad-2\lambda\int (V^*(v)-v_{\alpha}(\rho_t,v))(I-\nabla V^*(v)(v-V^*(v))d\rho_t(v).\\				
			\end{aligned}
		\end{equation}
		
		For term $I$, we have it equal to  
		\begin{equation}
			\begin{aligned}
				&-\int (v-V^*(v))(I-\nabla V^*(v))(v-V^*(v))^{\top}d\rho_t(v)\\
				&=-\int \normmsq{v-V^*(v)}d\rho_t(v),
			\end{aligned}
		\end{equation}
		{ the equality can be ensured as follows}:  for $t\in (-1,0)$, we always have
		\begin{equation}
			V^*(v-t(v-V^*(v)))=V^*(v),
		\end{equation}
		and so 
		\begin{equation}\label{eq:17}
			\frac{d}{dt}V^*(v-t(v-V^*(v)))\mid_{t=0}=-(v-V^*(v))\nabla V^*(v)=0.
		\end{equation}
		
		For term $II$, we need to calculate $\Delta\normmsq{v-V^*(v)}$: since $v$ and $V^*(v)$ are row vectors, $\nabla V^*(v)=\left(\nabla V_1^*(v),\nabla V_2^*(v),\ldots,\nabla V_d^*(v)\right)$ is { a matrix  of dimensions $d \times d$ and each $\nabla V_i^*(v)$ has dimension $d \times 1$}. Denote $\left\{e_i\right\}_{i=1}^d$ as \revised{a} basis of $\mathbb{R}^d$. If $\nabla V^*(v)$ is symmetric { (and we  prove it a few lines below)}, then we have $(v-V^*(v))\nabla V^*(v)=\nabla V^*(v)(v-V^*(v))^{\top}=0$ for any $v\in (\cV)^{c}$ by \eqref{eq:17}, and so
		\begin{equation}
			\begin{aligned}
				\Delta\normmsq{v-V^*(v)}&=\operatorname{tr}\left(\nabla^2 \normmsq{v-V^*(v)}\right)\\
				&=2\operatorname{tr}\left(\nabla \left[(I-\nabla V^*(v))(v-V^*(v))^{\top}\right]\right)\\
				&=2\operatorname{tr}\left(\nabla \left[v-V^*(v)\right]\right)\\
				&=2\operatorname{tr}\left(I-\nabla V^*(v)\right)\\
				&\leq 2d
			\end{aligned}
		\end{equation}
		since $\cV$ is convex and has smooth boundary and so $I-\nabla V^*(v)\preccurlyeq I$, see Remark~\ref{rmk:10}. \revised{In the following, we will show} $\nabla V^*(v)$ is symmetric~({we only need to verify the symmetry of its matrix with respect to any orthogonal basis}). { Let us assume that $k \leq d$ is the dimension of $\cV$, and $k-1$ is the dimension of its smooth boundary $\partial \cV$;} then, due to the convexity of $\cV$, there must be a $k$ dimensional linear subspace, denoted as $K$, such that $\cV\subset K$, since in Remark \ref{rem:0in} we have assumed $0\in \cV$. Without loss of generality, we will denote the basis of $K$ as $\{e_i\}_{i=1}^k$, its perpendicular basis denoted as $\{e_i\}_{i=k+1}^d$ and $V^*(v)=\sum_{i=1}^dV^*_i(v)e_i=\sum_{i=1}^kV^*_i(v)e_i$. For any direction $e\bot K$, we know $V^*(v+e)=V^*(v)=V^*(\mathrm{Proj}_{K}(v))$, where $\mathrm{Proj}_{K}(v)$ denotes the projection of $v$ onto $K$,  and thus $\partial_i V^*_j(v)=0$ for any $i\in [k+1,\ldots,d],j\in [1,\ldots,d]$. Under this basis,  we know $V^*_j(v)=0$ for any $v\in\mathbb{R}^d,j\in [k+1,\ldots,d]$ and thus $\partial_iV^*_j(v)=0$ for any 
  $i\in [1,\ldots,d],j\in[k+1,\ldots,d]$. 
  
 \revised{In the following, we need to verify} $\left(\nabla V^*(v)\right)_{k\times k}$ is symmetric for any $v\in K$. Assume $v_0\in K$ and $v_0\not\in \cV$~
  (if $v_0\in \cV$, then $V^*(v_0)=v_0$ and $\left(\nabla V^*(v)\right)_{k\times k}=I_{k\times k}$), and without loss of generality we can re-choose the basis $\{e_i\}_{i=1}^k$ of $K$ such that $v_0-V^*(v_0)=ae_k$ with $a>0$. From the convexity of $\cV$, we know
		$V_k^*(v_0+\epsilon e_i)\leq V_k^*(v_0)$, for $\epsilon$ small and $i=1,\ldots,k-1$, so we have
		$\partial_i V^*_k(v_0)=0$, $i=1,\ldots,k-1$. Since $V^*(v_0+te_k)=V^*(v_0)$ for $t\geq 0$, we have $\partial_k V^*(v_0)=0$. 
  
  Again, we need to verify $(\nabla V^*(v_0))_{(k-1)\times(k-1)}$ is symmetric. For simplicity, denote $b_0=V^*(v_0)$ and $\left(b_s\right)_{s\geq 0}$ as a smooth curve on $\partial\cV$ from $b_0$ and parameterized by time $s$, denote $a_s$ as the unit outward normal vector of $\partial \cV$ at point $b_s$ and then we have $v_0=b_0+ta_0$ for some $t\geq 0$ and
		\begin{equation}
			\begin{aligned}
				\lim_{s\to 0}\frac{b_s-b_0}{s}&=\lim_{s\to 0}\frac{V^*(b_s+ta_s)-V^*(b_0+ta_0)}{s}\quad //\text{since $b_s=V^*(b_s+ta_s)$}\\
				&=\lim_{s\to 0}\frac{V^*(b_0+ta_0+(b_s-b_0+t(a_s-a_0)))-V^*(b_0+ta_0)}{s}\\
                    &=(\nabla V^*(b_0+ta_0))_{(k-1)\times(k-1)}(I_{(k-1)\times(k-1)}+t \nabla N(b_0))\lim_{s\to 0}\frac{b_s-b_0}{s}\\
				&=(\nabla V^*(v_0))_{(k-1)\times(k-1)}(I_{(k-1)\times(k-1)}+t \nabla N(b_0))\lim_{s\to 0}\frac{b_s-b_0}{s},
			\end{aligned}
		\end{equation}
		where $N$ is the Gauss map~(that is $N(b_0)$ is the \revised{unit outward normal vector to} the surface $\partial \cV$ at point $b_0$). Since $\lim_{s\to 0}\frac{b_s-b_0}{s}$ can be any direction in the space spanned by $\left\{e_1,\ldots,e_{k-1}\right\}$, we thus have
		\begin{equation}
			(\nabla V^*(v_0))_{(k-1)\times(k-1)}=(I_{(k-1)\times(k-1)}+t \nabla N(b_0))^{-1}.
		\end{equation}
		As $\nabla N(v_0)$ is symmetric~({see, e.g., 
  \cite{do2016differential}, where in it, $d=3$; however, the proof of the self-adjointness of the gradient of the Gauss map can be applied to higher dimensions}), we have \revised{that} $(\nabla V^*(v_0))_{(k-1)\times(k-1)}$ is also symmetric. 
  All in all, we have shown $\nabla V^*(v)$ is symmetric for any $v\in\mathbb{R}^d$.
		
		
		At last, by combining term $I$ and term $II$, we have 
		\begin{equation}
			\begin{aligned}
				\frac{d}{dt}\int\normmsq{v-V^*(v)}d\rho_t(v)
				&=-2\lambda\int (v-V^*(v))(I-\nabla V^*(v))(v-V^*(v))^{\top}d\rho_t(v) \\
				&\quad+\sigma^2\int \operatorname{tr}(I-\nabla V^*(v))\normmsq{v-v_{\alpha}(\rho_t,v)}d\rho_t(v)\\
				&\quad-2\lambda\int (V^*(v)-v_{\alpha}(\rho_t,v))(I-\nabla V^*(v)(v-V^*(v))d\rho_t(v),
			\end{aligned}
		\end{equation}
	then we use the Cauchy-Schwartz inequality in the last term to get
	\begin{equation}
	\begin{aligned}
			&\frac{d}{dt}\int\normmsq{v-V^*(v)}d\rho_t(v)\\
			&\leq -2\lambda\int\normmsq{v-V^*(v)}d\rho_t(v)+2\sigma^2d\int\normmsq{v-V^*(v)}d\rho_t(v)\\
	&\quad+2\sigma^2d\int\normmsq{V^*(v)-v_{\alpha}(\rho_t,v)}d\rho_t(v)\\
	&\quad+2\lambda\sqrt{\int\normmsq{v-V^*(v)}d\rho_t(v)}\sqrt{\int \normmsq{V^*(v)-v_{\alpha}(\rho_t,v)}d\rho_t(v)}\\
	&\leq-(\lambda-2\sigma^2d)\int\normmsq{v-V^*(v)}d\rho_t(v)+(\lambda+2\sigma^2d)\int\normmsq{V^*(v)-v_{\alpha}(\rho_t,v)}d\rho_t(v).
	\end{aligned}
	\end{equation}
	\end{proof}
	\begin{remark}\label{rmk:10}
		Since $\cV$ is convex and has smooth boundary, we have $0\preccurlyeq\nabla V^*\preccurlyeq I$: remember in the proof, under suitable coordinates, { we have that the eigenvalues of $(\nabla V^*(v))_{(k-1)\times(k-1)}=\left(I+\normm{v-V^*(v)}\nabla N(V^*(v))\right)^{-1}$ belong to $[0,1]$, if $v\in (\cV)^c$}; 
   if $v\in\cV$, we have $v=V^*(v)$ and so $\left(\nabla V^*(v)\right)_{k\times k}=I_{k\times k}$.
	\end{remark}

	\revised{Next}, we will upper bound the second term in the right hand side of \eqref{eq:2727}. We will show that for any $T^*>0$, by choosing small enough $\varkappa$ and large enough $\alpha$, we have 
	\begin{equation}
		\normmsq{V^*(v)-v_{\alpha}(\rho_t^R,\cP(v))}\leq \Cctr\normmsq{v-V^*(v)}+\Csmall,\quad \forall t\in [0,T^*],
	\end{equation}
	where $\Cctr,\Csmall$ are constants independent of $R$.

	\begin{lemma}\label{lem:44}	
		Under Assumption \ref{asp:11}, Assumption \ref{asp:22} and Assumption \ref{asp:33}, for any $T^*>0$, by \revised{choosing $\varkappa$ small enough depending on $\lambda,\sigma,d,\sml$ and $\alpha$ large enough depending on $\rho_0,\lambda,\sigma,d,L,\varkappa,T^*,$\\
  $\operatorname{diam}(\cV)$, but independent of $R$,} we can have \begin{equation}\label{eq:4040}
			\normmsq{V^*(v)-v_{\alpha}(\rho_t^R,\cP(v))}\leq \Cctr\normmsq{v-V^*(v)}+\Csmall,\quad\forall t\in [0,T^*]
		\end{equation}
		where $\Cctr,\Csmall$ are constants independent of $R$.
	\end{lemma}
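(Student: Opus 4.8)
The plan is to prove the pointwise bound by splitting, for each $v\in\mathbb{R}^d$ and $t\in[0,T^*]$,
\begin{equation*}
\normm{V^*(v)-v_{\alpha}(\rho_t^R,\cP(v))}\le\normm{V^*(v)-V^*(\cP(v))}+\normm{V^*(\cP(v))-v_{\alpha}(\rho_t^R,\cP(v))},
\end{equation*}
working throughout in the regime $R>\operatorname{diam}(\cV)$, which is the only one needed since $R\to\infty$ is performed afterwards. The first term is handled by elementary convex geometry: $\cP$ and $V^*(\cdot)$ are $1$-Lipschitz projections onto convex sets, so $\normm{V^*(v)-V^*(\cP(v))}\le\min\{\normm{v-\cP(v)},\operatorname{diam}(\cV)\}$; moreover, since $0\in\cV$ and $t\mapsto\dist{tv,\cV}$ is convex, nonnegative and zero at $t=0$, it is nondecreasing, whence $\dist{\cP(v),\cV}\le\dist{v,\cV}=\normm{v-V^*(v)}$. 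Together with $\dist{v,\cV}\ge\normm{v}-\operatorname{diam}(\cV)$ this gives $\normm{V^*(v)-V^*(\cP(v))}^2\le\big(\operatorname{diam}(\cV)/(R-\operatorname{diam}(\cV))\big)^2\normm{v-V^*(v)}^2$, a contribution that is harmless and controlled uniformly over the relevant range of $R$. Hence everything reduces to the estimate $\normm{V^*(w)-v_{\alpha}(\rho_t^R,w)}^2\le\Cctr\dist{w,\cV}^2+\Csmall$ for all $\normm{w}\le R$, with constants small and independent of $R$, which transfers back through $\dist{\cP(v),\cV}\le\dist{v,\cV}$.

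For this estimate fix $w$ with $\normm{w}\le R$. By Lemma~\ref{lem:111}, $A(u,w)-A(V^*(w),w)\ge\normm{u-V^*(w)}^2-r_{\varkappa}^2$ for all $u$, where $r_{\varkappa}^2:=2(2\varkappa/\sml^2)^{1/(p-1)}$ is as small as desired for $\varkappa$ small. Writing $v_{\alpha}(\rho_t^R,w)=\int u\,d\hat\mu(u)$ with the tilted probability $\hat\mu\propto e^{-\alpha\hat g}\rho_t^R$ and $\hat g:=A(\cdot,w)-A_{\min}(w)\ge 0$, the previous inequality gives $\normm{u-V^*(w)}\le(\hat g(u)+r_{\varkappa}^2)^{1/2}$, so Jensen's inequality yields
\begin{equation*}
\normm{V^*(w)-v_{\alpha}(\rho_t^R,w)}\le\int\normm{u-V^*(w)}\,d\hat\mu\le\Big(r_{\varkappa}^2+\textstyle\int\hat g\,d\hat\mu\Big)^{1/2}.
\end{equation*}
Splitting the numerator of $\int\hat g\,d\hat\mu=\big(\int\hat g e^{-\alpha\hat g}d\rho_t^R\big)\big/\big(\int e^{-\alpha\hat g}d\rho_t^R\big)$ at a level $s$, using $xe^{-\alpha x/2}\le 2/(e\alpha)$, and bounding the remaining exponential integral from below on a sublevel set $\{A(\cdot,w)\le A_{\min}(w)+s'\}$ with $s'\le s/2$, reduces the problem to a lower bound on the sublevel mass $\rho_t^R(\{A(\cdot,w)\le A_{\min}(w)+s'\})$.

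Two ingredients enter here. The first is a lower bound $\rho_t^R\ge c_t>0$ for the density of $\rho_t^R$ on a fixed neighbourhood of $\cV$, with $c_t$ depending only on $\rho_0,\lambda,\sigma,d,T^*$ and not on $R$. The second is the geometry of the sublevel set near $V^*(w)$: its volume is bounded below, for $0<s'\le s_0$, by $c(\cV,d)\,\normm{w}^{-q}(s')^{m}$ for exponents $q,m>0$ depending only on $d$, $\dim\cV$ and $p$, using the convexity and smooth boundary of $\cV$, the upper growth of $f$ from $\cV$ coming from \eqref{eq:lip}, and its lower growth from Assumption~\ref{asp:11}; crucially, the adaptive factor $f(w)+\Cone\sim\normm{w}^2$ (by \eqref{eq:gr}) enters only through this polynomial power of $\normm{w}^{-1}$. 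Combining these and optimizing the choice of $s$, one gets $\int\hat g\,d\hat\mu\le C(d,\cV,c_t,\varkappa)\,(\log(1+\normm{w})+\log\alpha)/\alpha$. Since $\dist{w,\cV}^2\ge\tfrac14\normm{w}^2$ once $\normm{w}$ exceeds a fixed threshold $M_1=\max\{1,2\operatorname{diam}(\cV)\}$, and since $x\mapsto\log(1+x)/x^2$ is bounded on $[M_1,\infty)$, this bound is absorbed into $\Cctr\dist{w,\cV}^2$ for $\normm{w}\ge M_1$ as soon as $\alpha$ exceeds a threshold depending only on $\Cctr,d,\cV,c_t,\varkappa$ — hence independent of $R$. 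For the complementary bounded regime $\normm{w}\le M_1$, where $f(w)+\Cone$ is bounded, one applies the quantitative Laplace principle of Lemma~\ref{lem:qlp} directly (again using the density lower bound), obtaining $\normm{V^*(w)-v_{\alpha}(\rho_t^R,w)}^2\le\Csmall$ for $\varkappa$ small and $\alpha$ large, both chosen independently of $R$.

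What remains, and where essentially all the difficulty lies, is to supply the two underpinnings uniformly in $R$: (a) a bound $\sup_{t\le T^*}\int(1+\normm{v}^2)^2\,d\rho_t^R(v)\le K$ with $K$ independent of $R$, needed both to make the Laplace errors genuinely small and to control the drift of \eqref{eq:projFP}; and (b) the density lower bound $\rho_t^R\ge c_t$ near $\cV$ with $c_t$ independent of $R$. For (b) I would use regularity theory for the parabolic equation \eqref{eq:projFP} — with locally bounded drift (from (a)) and locally bounded diffusion coefficient $\normm{v-v_{\alpha}(\rho_t^R,\cP(v))}^2$, a Harnack/comparison argument propagates the positivity of $\rho_0$ — or, alternatively, a Gr\"onwall argument for $t\mapsto\int\phi_r^{\tau}(v-u')\,d\rho_t^R(v)$ with the cut-offs $\phi_r^{\tau}$ of \eqref{eq:first} and $u'$ ranging over $\cV$, whose time derivative is bounded below by $-C\int\phi_r^{\tau}(v-u')\,d\rho_t^R(v)$. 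Since (a) and (b) feed into each other — a uniform control of $v_{\alpha}(\rho_t^R,\cP(v))$ needs the conclusion being proved, and that control is what makes (a), (b), and the Laplace step $R$-uniform — the cleanest way to close the loop is a continuity/bootstrap argument on the maximal time up to which all the $R$-uniform bounds hold, showing it is positive and cannot be less than $T^*$. The main obstacle is precisely this package: the degeneracy of the diffusion in \eqref{eq:projFP} on $\{v=v_{\alpha}(\rho_t^R,\cP(v))\}$ (which obstructs naive elliptic estimates in (b)), together with the shrinking, as $\normm{w}\to\infty$, of the neighbourhood of $V^*(w)$ on which $A(\cdot,w)$ is controllably close to its minimum — a shrinkage that only the adaptive scaling $f(w)+\Cone$ keeps slow enough.
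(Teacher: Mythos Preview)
Your overall architecture — quantitative Laplace estimate, lower bound on the mass of $\rho_t^R$ near $\cV$, and a continuity/bootstrap argument to close the loop — is correct and matches the paper. Your splitting through $V^*(\cP(v))$ is in fact more careful than the paper, which silently identifies $V^*(v)$ and $V^*(\cP(v))$.

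There are, however, two genuine mismatches between your implementation and what actually works.

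\textbf{The Laplace step.} The paper does not go through Jensen and sublevel-volume estimates. It applies the quantitative Laplace principle (Lemma~\ref{lem:qlp}) directly with a \emph{fixed} radius $r$, obtaining
\[
\normmsq{v_\alpha(\rho_t^R,\cP(v))-V^*(v)}\le \Ctwelve\Big[r\,\normmsq{v-V^*(v)}+r+\big(E^R_{t,r,p,q,\sml}(\alpha,\varkappa)\big)^2\mathcal V_t^R\Big]+2q,
\]
where the $\normmsq{v-V^*(v)}$ factor enters solely through the oscillation bound $A_r(v)\le\Celeven(1+\normmsq{v-V^*(v)})r$. This is exactly the form \eqref{eq:4040}. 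Your route instead yields $\int\hat g\,d\hat\mu\lesssim (1+\normm{w})^{2d/(d+1)}\alpha^{-1/(d+1)}$ after optimisation (the claimed $(\log(1+\normm{w})+\log\alpha)/\alpha$ is too optimistic and does not follow from the splitting you describe). That polynomial bound would still be absorbable, but it forces you to use sublevel sets whose radius scales like $s'/(1+\normm{w}^2)$, and this is incompatible with the mass lower bound you need: the Gr\"onwall argument on $\int\phi_r^\tau(\cdot-u')\,d\rho_t^R$ (your ``alternative'' in (b), which is Lemma~\ref{lem:5l}) only works for a \emph{fixed} $r$, since its decay rate $q'$ depends on $r$. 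The paper's choice of fixed $r$ in the Laplace step is precisely what makes it compatible with Lemma~\ref{lem:5l}.

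\textbf{The bootstrap.} Your ingredient (a), a uniform fourth-moment bound, is not an input here but a \emph{consequence} (it is Lemma~\ref{lem:3}, proved after this lemma). The paper bootstraps on $\mathcal V_t^R$ alone: set $\cc=10T^*+\mathcal V_0^R$ and $T^o=\sup\{t:\mathcal V^R_{t'}\le\cc\ \forall t'\le t\}$. On $[0,T^*\wedge T^o]$ the key link you did not identify is the estimate \eqref{eq:2626}, which for $v\in\cV+B_r(0)$ gives $\normmsq{v_\alpha(\rho_t^R,\cP(v))}\le\Cnine\mathcal V_t^R+\Cten$ with $R$-independent constants (since only $\normm{v}\le\diam(\cV)+r$ enters). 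This bounds the constant $B$ in Lemma~\ref{lem:5l}, hence the mass lower bound, hence one can make $H^R_{t,r,p,q,\sml}\le 10$ by choosing $\varkappa,q,r$ small and $\alpha$ large. A short contradiction then gives $T^o\ge T^*$. No density Harnack estimate is needed (and, as you note, the degeneracy would obstruct it).
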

 
	\begin{proof}
		Under Assumption \ref{asp:22}, by a direct calculation, it is easy to show that 
		\begin{equation}\label{eq:404040}
			\sup_{u\in B_r(V^*(v))}|A(u,v)-A(V^*(v),v)|\leq \Celeven(1+\normmsq{v-V^*(v)})r,\text{where $r\in(0,1),$}
		\end{equation}
		for some constant $\Celeven=\Celeven(L,\varkappa,\operatorname{diam}(\cV))$. By Lemma \ref{lem:111} and the quantitative Laplace principle Lemma \ref{lem:qlp}, we have 
\begin{equation}\label{iieq:175}
    \begin{aligned}
        \normm{v_{\alpha}(\rho_t^R,\cP(v))-V^*(v)}\leq (q+A_r(v))^{{1}/{2}}+\frac{e^{-\alpha(q-2(\frac{2\kappa}{\sml^2})^{1/(p-1)})}}{\rho_t^R(B_r(V^*(v)))}\int\normm{w-V^*(v)}d\rho_t^R(w),
    \end{aligned}
\end{equation}
for any $r>0$ and $q>2(2\varkappa/\sml^2)^{1/(p-1)},$ here $A_r:=\sup_{u\in B_r(V^*(v))}A(u,v)-A(V^*(v),v)$. Insert \eqref{eq:404040} into \eqref{iieq:175}, we have
\begin{equation}\label{iieq:177}
    \begin{aligned}
         \normmsq{v_{\alpha}(\rho_t^R,\cP(v))-V^*(v)}&\leq 2q+4\Celeven r(1+\normmsq{v-V^*(v)})\\
         &\quad+2\left(\frac{e^{-\alpha(q-2(\frac{2\kappa}{\sml^2})^{1/(p-1)})}}{\rho_t^R(B_r(V^*(v)))}\right)^2\int\normmsq{w-V^*(v)}d\rho_t^R(w),
    \end{aligned}
 \end{equation}
  which is \begin{equation}\label{eq:4141}
			\begin{aligned}
				&\normmsq{v_{\alpha}(\rho_t^R,\cP(v))-V^*(v)}\\&\leq \Ctwelve\left[r\normmsq{v-V^*(v)}+r+(E^R_{t,r,p,q,\sml}(\alpha,\varkappa))^2\int \normmsq{v-V^*(v)}d\rho_t^R(v)\right]+2q,
			\end{aligned}
		\end{equation}
		where $\Ctwelve$ only depends on $L,\operatorname{diam}(\cV),\varkappa$, and $E^R_{t,r,p,q,\sml}(\alpha,\varkappa):=\frac{e^{-\alpha(q-2({2\varkappa}/{\sml^2})^{1/(p-1)})}}{\inf_{w\in\cV}\int_{B_r(w)}\phi_r^{\tau}(u-w)\rho_t^R(u)}$.
		By Lemma \ref{lem:66} and the above analysis, we have 
			\begin{equation}
			\begin{aligned}
				&\frac{d}{dt}\int\normmsq{v-V^*(v)}d\rho_t^R(v)\\
				&\leq-(\lambda-2\sigma^2d)\int\normmsq{v-V^*(v)}d\rho^R_t(v)+(\lambda+2\sigma^2d)\int\normmsq{V^*(v)-v_{\alpha}(\rho_t^R,\cP(v))}d\rho_t^R(v)\\
				&\leq-[\lambda-2\sigma^2d-\Ctwelve r(\lambda+2\sigma^2d)]\int\normmsq{v-V^*(v)}d\rho_t^R(v)\\
				&\quad+(\lambda+2\sigma^2d)\Ctwelve\left[r+(E^R_{t,r,p,q,\sml}(\alpha,\varkappa))^2\int \normmsq{v-V^*(v)}d\rho_t^R(v)\right]+2(\lambda+2\sigma^2d)q\\
				&\leq -\Cexp\int\normmsq{v-V^*(v)}d\rho^R_t(v)+\Cerr\left[(E^R_{t,r,p,q,\sml}(\alpha,\varkappa))^2\int \normmsq{v-V^*(v)}d\rho^R_t(v)+r\right]\\
    &\quad+2(\lambda+2\sigma^2d)q,
			\end{aligned}
		\end{equation}
		where $\Cexp:=\lambda-2\sigma^2d-\Ctwelve r(\lambda+2\sigma^2d)$ depends on $\lambda,\sigma,d,L,\varkappa,\operatorname{diam}(\cV)$~(\revised{we need to choose $r,\lambda,\sigma$ to make sure $\Cexp>0$; for example, let $\lambda>0$ and set $\sigma,r$ such that $\lambda=4\sigma^2d, \Ctwelve r(\lambda+2\sigma^2d)=1/4\lambda$, then $\Cexp=1/4\lambda>0$}) and $\Cerr:=(\lambda+2\sigma^2d)\Ctwelve$ depends on $\lambda,\sigma,d,L,\varkappa,\operatorname{diam}(\cV)$. For simplicity, we will denote $ \cv_t^R:=\int\normmsq{v-V^*(v)}d\rho_t^R(v)$ and
  \begin{equation}
        H^R_{t,r,p,q,\sml}(\alpha,\varkappa,\mathcal{V}_t^R):=\Cerr\left[(E^R_{t,r,p,q,\sml}(\alpha,\varkappa))^2\cv_t^R+r\right]+2(\lambda+2\sigma^2d)q,
\end{equation} then \revised{with this} new notation, we have 
\begin{equation}
			\frac{d}{dt}\mathcal{V}_t^R\leq -\Cexp\mathcal{V}_t^R+H^R_{t,r,p,q,\sml}(\alpha,\varkappa,\mathcal{V}_t^R).
		\end{equation}
	
	For any $T^*>0$, set $\cc = 10T^*+\mathcal{V}_0^R$, define $T^{o}:=\sup\left\{t\geq 0,\mathcal{V}^R_{t'}\leq \cc,\forall t'\in [0,t]\right\}$; since in \eqref{eq:2626} we have proved
	\begin{equation}
		\normmsq{v_{\alpha}(\rho_t^R,\cP(v))}\leq\Cnine\mathcal{V}^R_t+\Cten,
	\end{equation}
	with $\Cnine=20\Ceight(\operatorname{diam}(\cV)+r,L,\varkappa)(1+\frac{10}{\alpha\operatorname{diam}(\cV)^2}), \Cten=10^4\operatorname{diam}(\cV)^2+\Cnine$, for any $v\in \cV+B_r(0)$, so in Lemma \ref{lem:5l}, the constant
 \begin{equation}
     B:=\sup_{v\in\cV}\sup_{t\in[0, {T^*\wedge T^o}],w\in B_r(v)}\normm{v_{\alpha}(\rho_t,w)-v}\leq \Cthirteen(\Cnine,\Cten,\revised{\cc})
 \end{equation}is uniformly upper bounded    
 and so, by Lemma \ref{lem:5l}, we have $\inf_{w\in\cV}\int_{B_r(w)}\phi_r^{\tau}(u-w)\rho_t^R(u)\geq \Cfourteen>0,$ where $\Cfourteen$ depends on $\rho_0,r,\lambda,\sigma,d,T^*,\operatorname{diam}(\cV),L,\varkappa$. This means we can choose $\varkappa,q~(>2(2\varkappa/\sml^2)^{1/(p-1)})$ small enough~(for example we choose $q=3(2\varkappa/\sml^2)^{1/(p-1)}$) depending on $\lambda,\sigma,d,\sml$; $r$ small enough which depends on $\lambda,\sigma,d,L,\operatorname{diam}(\cV),\varkappa$; $\alpha$ large enough which depends on $\rho_0,r,q,\varkappa,\sigma,\lambda,d,T^*,\operatorname{diam}(\cV),L$,  such that
	\begin{equation}
		H^R_{t,r,p,q,\sml}(\alpha,\varkappa,\mathcal{V}_t^R)\leq 10,
	\end{equation}
	for any $t\in [0,T^*\wedge T^o]$. 
	
	Next, we  show $T^o\geq T^*$, then on $[0,T^*]$, we have 
	$H^R_{t,r,p,q,\sml}(\alpha,\varkappa,\mathcal{V}_t^R)\leq 10$  and so 
	\begin{equation}
		\normmsq{v_{\alpha}(\rho_t^R,\cP(v))-V^*(v)}\leq \Cctr\normmsq{v-V^*(v)}+\Csmall.
	\end{equation}
	We prove this by contradiction: if $T^o<T^*$, we have for any $t\in [0,T^o]$
	\begin{equation}
		\frac{d}{dt}	\mathcal{V}_t^R\leq 10,
	\end{equation}
	and so
	\begin{equation}
		\mathcal{V}^R_t-\mathcal{V}^R_0\leq 10 t,
	\end{equation}
	then
	\begin{equation}\label{eq:4949}
		\cc-\mathcal{V}_0^R= \mathcal{V}_{T^o}^R-\mathcal{V}_0^R\leq 10 T^o.
	\end{equation}
	However, by definition $	\cc=10T^*+\mathcal{V}_0^R$, then with \eqref{eq:4949}, we have 
	\begin{equation}
		\cc=10T^*+\mathcal{V}^R_0\leq 10T^o+\mathcal{V}_0^R,
	\end{equation}
	which means $T^*\leq T^o$ and so this is a contradiction.
	
	All in all, we have shown that for any $T^*$, we can choose $\varkappa,q,r$ small enough and $\alpha$ large enough in \eqref{eq:4141} such that 
	
	\begin{equation}
		\normmsq{v_{\alpha}(\rho_t^R,\cP(v))-V^*(v)}\leq \Cctr\normmsq{v-V^*(v)}+\Csmall,
	\end{equation}
	where $\Cctr,\Csmall$ are independent of $R$, \revised{and also independent of $\lambda,\sigma,d,L,\varkappa,\operatorname{diam}(\cV),T^*,\sml$.}
	\end{proof}

With Lemma \ref{lem:44}, we will prove \begin{equation} 
	\Ep{\revised{|\bold{V}^R_t|^4}}\leq \Cd,
\end{equation}
where here and in the following $\bold{V}^R_t:=\sup_{s\in[0,t]}\normm{V^R_s}$, $\Cd$ is a constant.}
	
	\begin{lemma}\label{lem:3} 	Under Assumption \ref{asp:11}, Assumption \ref{asp:22} and Assumption \ref{asp:33}, for any $T^*>0$, we can choose \revised{$\varkappa$ small enough depending on $\lambda,\sigma,d,\sml$ and $\alpha$ large enough depending on $\lambda,\sigma,d,L,\varkappa,\operatorname{diam}(\cV),T^*$, but} independent of $R$, such that  
		\begin{equation}
			\Ep{\revised{|\bold{V}^R_t|^4}}\leq \Cd,\quad t\in [0,T^*],
		\end{equation}
		where $\Cd$ only depends on $\lambda,\sigma,d,\rho_0,T^*$. 
	\end{lemma}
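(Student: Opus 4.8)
The plan is to use Lemma~\ref{lem:44} to upgrade the pointwise bound \eqref{eq:4040} into an $R$-uniform linear-growth estimate for the coefficients of the projected equation \eqref{eq:1919}, and then to run a classical $L^4$ moment computation for It\^o processes, checking along the way that no constant acquires a dependence on $R$. The analytic content has already been extracted in Lemma~\ref{lem:44}; what is left here is essentially bookkeeping together with a routine Gr\"onwall argument.

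First I would fix $T^*>0$ and select $\varkappa$ small and $\alpha$ large as in Lemma~\ref{lem:44}, so that \eqref{eq:4040} holds on $[0,T^*]$ with $\Cctr,\Csmall$ independent of $R$. Since $\cV$ is compact and, by Remark~\ref{rem:0in}, contains the origin, we have $\normm{V^*(v)}\leq\operatorname{diam}(\cV)$ and hence $\normm{v-V^*(v)}\leq\normm{v}+\operatorname{diam}(\cV)$ for every $v\in\mathbb{R}^d$; combining this with \eqref{eq:4040} and the triangle inequality,
\[
\normm{v-v_{\alpha}(\rho_t^R,\cP(v))}\leq \normm{v}+\normm{V^*(v)}+\normm{V^*(v)-v_{\alpha}(\rho_t^R,\cP(v))}\leq C\bigl(1+\normm{v}\bigr)
\]
for all $t\in[0,T^*]$ and $v\in\mathbb{R}^d$, with $C$ depending only on $\Cctr,\Csmall,\operatorname{diam}(\cV)$, in particular not on $R$. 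Thus both the drift $-\lambda\bigl(v-v_{\alpha}(\rho_t^R,\cP(v))\bigr)$ and the (scalar, isotropic) diffusion coefficient $\sigma\normm{v-v_{\alpha}(\rho_t^R,\cP(v))}$ of \eqref{eq:1919} obey an $R$-uniform linear-growth bound on $[0,T^*]$.

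Next I would carry out the moment estimate. Since Proposition~\ref{prop:11} already provides the strong solution $V_t^R$ and its law $\rho_t^R$, equation \eqref{eq:1919} may be treated as a plain It\^o SDE with coefficients bounded by the linear-growth estimate above. Introducing the stopping times $\tau_M:=\inf\{s\geq 0:\normm{V_s^R}\geq M\}$ to keep every expectation finite, I would apply It\^o's formula to $s\mapsto\normm{V_s^R}^4$ on $[0,t\wedge\tau_M]$, take the supremum over $s\leq t\wedge\tau_M$, bound the martingale term by the Burkholder--Davis--Gundy inequality, and use Young's inequality to absorb a multiple of $\Ep{\sup_{s\leq t\wedge\tau_M}\normm{V_s^R}^4}$ into the left-hand side; the remaining drift, It\^o-correction and quadratic-variation terms are all dominated by $C(1+\normm{V_s^R}^4)$ thanks to the linear-growth bound above, with $C=C(\lambda,\sigma,d,T^*)$ independent of $R$ and $M$. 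Since $V_0^R\sim\rho_0\in\mathcal{P}_4(\mathbb{R}^d)$ by Proposition~\ref{prop:11}, the term $\Ep{\normm{V_0^R}^4}=\int\normm{v}^4\,d\rho_0(v)$ is a finite constant depending only on $\rho_0$, so Gr\"onwall's lemma on $[0,T^*]$ yields $\Ep{\sup_{s\leq t\wedge\tau_M}\normm{V_s^R}^4}\leq \Cd$ uniformly in $R$ and $M$. Letting $M\to\infty$ and invoking monotone convergence gives $\Ep{|\bold{V}_t^R|^4}\leq\Cd$ for $t\in[0,T^*]$, with $\Cd$ depending only on $\lambda,\sigma,d,\rho_0,T^*$, as claimed.

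I do not expect a serious obstacle here: the only point requiring care is that every constant stays free of $R$. For the $V^*$-contribution this follows from the compactness of $\cV$, and for the $v_{\alpha}(\rho_t^R,\cP(\cdot))-V^*(\cdot)$ contribution it follows precisely because Lemma~\ref{lem:44} was established with $R$-independent constants (via the self-contained $T^o$-bootstrap in its proof). The localization by $\tau_M$, the Burkholder--Davis--Gundy/Young absorption, and the Gr\"onwall closure are all entirely standard.
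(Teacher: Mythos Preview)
Your proposal is correct and follows essentially the same approach as the paper: both use Lemma~\ref{lem:44} to obtain an $R$-independent linear-growth bound on the coefficients of \eqref{eq:1919}, and then close with a standard fourth-moment estimate via Burkholder--Davis--Gundy and Gr\"onwall. The only cosmetic difference is that the paper works directly from the integral form of the SDE and raises to the fourth power, whereas you apply It\^o's formula to $\|V_s^R\|^4$ with a localization by stopping times; both are routine and equivalent.
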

	\begin{proof}
		By Lemma \ref{lem:44}, we have
		\begin{equation}\label{eq:90}
			\normmsq{v_{\alpha}(\rho^R_t,\cP(v))-V^*(v)}\leq \Cctr\normmsq{v-V^*(v)}+\Csmall.
		\end{equation}

  \revised{For any $h\in [0,t]$, by using the integral formulation of the stochastic process $V_s^R$ and standard inequalities, we have
  \begin{equation}
      \begin{aligned}
          {\normm{V_{h}^R}^4}&\leq 27\left\{{\normm{V_0^R}^4}+\lambda^4{\normm{\int_0^{h}V_s^R-v_{\alpha}(\rho_s^R,\cP(V_s^R))ds}^4}\right.\\
          &\quad\left.+\sigma^4{\normm{\int_0^{h}\normm{V_s^R-v_{\alpha}(\rho_s^R,\cP(V_s^R))}dB_s}^4}\right\}\\
          &\leq 27\left\{{\normm{V_0^R}^4}+\lambda^4{\sup_{h'\in [0,t]}\normm{\int_0^{h'}V_s^R-v_{\alpha}(\rho_s^R,\cP(V_s^R))ds}^4}\right.\\
          &\quad\left.+\sigma^4{\sup_{h'\in[0,t]}\normm{\int_0^{h'}\normm{V_s^R-v_{\alpha}(\rho_s^R,\cP(V_s^R))}dB_s}^4}\right\},
      \end{aligned}
  \end{equation}
  then
  \begin{equation}
      \begin{aligned}
           \sup_{h\in[0,t]}\normm{V_{h}^R}^4&\leq 27\left\{
           {\normm{V_0^R}^4}+\lambda^4{\sup_{h\in [0,t]}\normm{\int_0^hV_s^R-v_{\alpha}(\rho_s^R,\cP(V_s^R))ds}^4}\right.\\
          &\quad\left.+\sigma^4{\sup_{h\in[0,t]}\normm{\int_0^{h}\normm{V_s^R-v_{\alpha}(\rho_s^R,\cP(V_s^R))}dB_s}^4}\right\},
      \end{aligned}
  \end{equation}
  where we have changed the dumb index $h'$ into $h$ in the right hand side,
  take expectation from both sides, we have
  }		
  \begin{equation}
			\begin{aligned}
				\Ep{\revised{|\bold{V}^R_t|^4}}&\revised{\leq} 27\left\{\Ep{\normm{{V_0^R}}^4}+\lambda^4\Ep{\sup_{\revised{h}\in[0,t]}\normm{\int_{0}^{\revised{h}} V_s^R-v_{\alpha}(\rho_s^R,\cP(V_s^R))ds}^4}\right.\\
				&\left.\quad+\sigma^4\Ep{\sup_{\revised{h}\in[0,t]}\normm{\int_{0}^{\revised{h}} \normm{V_s^R-v_{\alpha}(\rho_s^R,\cP(V_s^R))}dB_s}^4}\right\}.
			\end{aligned}
		\end{equation}
		For the last term, by applying the Burkholder--Davis--Gundy inequality, we have
		\begin{equation}
			\Ep{\sup_{\revised{h}\in[0,t]}\normm{\int_{0}^{\revised{h}} \normm{V_s^R-v_{\alpha}(\rho_s^R,\cP(V_s^R))}dB_s}^4}\revised{\leq} \revised{C_{\mathrm{d}}} \Ep{{\left |\int_{0}^t \normmsq{V_s^R-v_{\alpha}(\rho_s^R,\cP(V_s^R))}ds \right |^2}},
		\end{equation}
  \revised{where here, we denote $C_{\mathrm{d}}$ as a constant that may only depend on $d$.} 
		Since $\normm{V^*(v)}$ is bounded ({in view of the compactness of $\cV$}), so using \eqref{eq:90} and \revised{the triangle} inequality, we have
		\begin{equation}
			\Ep{\revised{|\bold{V}^R_t|^4}}\revised{\leq} 
			\Cfifteen\left[1+\int_0^t\revised{\Ep{|\bold{V}^R_s|^4}} ds\right],
		\end{equation}
		where $\Cfifteen$ depends on $\lambda,\sigma,d,\rho_0$. Finally by \revised{Gr\"onwall's inequality}, we have
		\begin{equation}
			\Ep{\revised{|\bold{V}^R_t|^4}}\leq \Cd,
		\end{equation}
		where $\Cd$ only depends on $\lambda,\sigma,d,\rho_0,T^*$.
	\end{proof}
	The next proposition shows the SDE~\eqref{eq:SDEsimple} has a strong solution, this is proved by a limiting procedure.
	\begin{theorem}\label{prop:22}Under Assumption \ref{asp:11}, Assumption \ref{asp:22} and Assumption \ref{asp:33}, for any $T^*>0$, by choosing $\varkappa>0$ small enough and $\alpha>0$ { large} enough, then \revised{the} SDE \eqref{eq:SDEsimple} has a strong solution, { whose law $\rho_t$} satisfies the related Fokker--Planck equation \eqref{eq:FPsimple} in the weak sense.
	\end{theorem}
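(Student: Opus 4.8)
The plan is to obtain the strong solution of \eqref{eq:SDEsimple} as a limit, along $R\to\infty$, of the projected solutions $V_t^R$ from Proposition \ref{prop:11}, exploiting the $R$-uniform a priori bounds of Lemma \ref{lem:44} and Lemma \ref{lem:3}, and then to upgrade the resulting weak solution to a strong one by freezing its marginal flow. Fix $T^*>0$ and choose $\varkappa>0$ small and $\alpha>0$ large so that both Lemma \ref{lem:44} and Lemma \ref{lem:3} apply; in particular $\sup_{R>0}\Ep{\big(\sup_{s\in[0,T^*]}\normm{V_s^R}\big)^4}\le\Cd$ with $\Cd$ independent of $R$. Re-running the It\^o-isometry and Burkholder--Davis--Gundy estimates underlying the proofs of Proposition \ref{prop:11} and Lemma \ref{lem:3} for $V_t^R$ itself and feeding in this moment bound, one gets $\Ep{\normm{V_t^R-V_s^R}^4}\le C\,|t-s|^2$ with $C$ independent of $R$. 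Hence, by Kolmogorov's tightness criterion together with the uniform fourth-moment bound, the family of path-space laws $\{\Law(V^R)\}_{R>0}\subset\mathcal P\big(\mathcal C([0,T^*];\mathbb R^d)\big)$ is tight.

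By Prokhorov's theorem one extracts $R_n\to\infty$ with $\Law(V^{R_n})$ converging weakly to some $\rho$, and by the Skorokhod representation theorem this convergence may be realised almost surely on a common probability space, jointly with the driving Brownian motions, with limit process $V$ of law $\rho$; its time marginals $\rho_t$ inherit $\int\normm{w}^4\,d\rho_t(w)\le\Cd$. To identify $V$ as a weak solution of \eqref{eq:SDEsimple}: the uniform fourth-moment bound promotes the almost sure path convergence to $\sup_{t\in[0,T^*]}W_2(\rho_t^{R_n},\rho_t)\to0$, and the Wasserstein stability of the consensus map---the analogue here of \cite[Lemma 3.2]{carrillo2018analytical}, valid by Lemma \ref{lem:2} and the moment bounds---gives $v_\alpha(\rho_t^{R_n},\cdot)\to v_\alpha(\rho_t,\cdot)$ locally uniformly. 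Moreover, along the almost surely convergent realisations, $\sup_{t\le T^*}\normm{V_t^{R_n}}$ is eventually strictly below $R_n$, so $\cP(V_t^{R_n})=V_t^{R_n}$ for $n$ large and the projection drops out in the limit. Passing to the limit in the integral form of \eqref{eq:1919}---dominated convergence for the drift term, and stability of stochastic integrals under the joint a.s. convergence of integrands and Brownian motions for the diffusion term---shows that $V$ solves \eqref{eq:SDEsimple} in the weak sense, and It\^o's formula then gives that $\rho_t=\Law(V_t)$ satisfies \eqref{eq:FPsimple} weakly.

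It remains to produce a \emph{strong} solution on the prescribed filtered probability space carrying $B$. Here I would freeze the marginal flow $(\rho_t)_{t\in[0,T^*]}$ just constructed and solve the decoupled, classical It\^o SDE $dX_t=-\lambda\big(X_t-v_\alpha(\rho_t,X_t)\big)\,dt+\sigma\normm{X_t-v_\alpha(\rho_t,X_t)}\,dB_t$ with $X_0\sim\rho_0$. Its coefficients are jointly continuous in $(t,x)$; locally Lipschitz in $x$ uniformly for $t\in[0,T^*]$ by Lemma \ref{lem:2} applied with $\int\normm{w}^4\,d\rho_t(w)\le\Cd$; and of at most linear growth in $x$, because passing the estimate of Lemma \ref{lem:44} to the limit $R\to\infty$ yields $\normmsq{V^*(v)-v_\alpha(\rho_t,v)}\le\Cctr\normmsq{v-V^*(v)}+\Csmall$ for all $v\in\mathbb R^d$ and $t\le T^*$, and $\normm{V^*(\cdot)}$ is bounded on account of the compactness of $\cV$. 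Hence this decoupled SDE has a unique strong solution $X$ on $(\Omega,\mathcal F,\mathbb P,\{B_t\})$. Since $V$ is itself a weak solution of the same decoupled SDE and weak uniqueness holds for it (locally Lipschitz coefficients of linear growth), $\Law(X_t)=\rho_t$ for all $t$, so $v_\alpha(\rho_t,X_t)=v_\alpha(\Law(X_t),X_t)$ and $X$ is in fact a strong solution of the mean-field equation \eqref{eq:SDEsimple}, with law $\rho_t$ still satisfying \eqref{eq:FPsimple} weakly.

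I expect the chief obstacle to be the limit passage in the nonlocal, nonlinear term $v_\alpha(\rho_s^{R_n},\cP(V_s^{R_n}))$: one must control at once the Wasserstein convergence of the laws (which re-enters through the stability estimate for $v_\alpha$), the vanishing of the projection $\cP$ along the convergent realisations, and the convergence of the stochastic integrals---and it is precisely the $R$-independent bounds furnished by Lemma \ref{lem:44} and Lemma \ref{lem:3} that make this compactness argument go through.
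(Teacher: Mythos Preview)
Your proposal is correct and reaches the same conclusion, but it follows a genuinely different route from the paper's proof.

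The paper does \emph{not} invoke tightness of the process laws on path space. Instead it takes compactness at the level of the \emph{coefficient functions}: by Lemma \ref{lem:2} and Lemma \ref{lem:3} the family $\{v_\alpha(\rho_t^{R},\cdot)\}_{R>0,\,t\in[0,T^*]}$ is uniformly bounded and uniformly Lipschitz on each ball, so Ascoli--Arzel\`a (with a diagonal extraction over expanding balls) produces a limit $v_{t,\infty}(\cdot)$, with an explicit rate on $B_{2^i}(0)$. One then solves the SDE with this \emph{frozen} coefficient directly on the original filtered space---obtaining a strong solution $V^\infty$ at once from classical theory---and finally verifies the fixed-point identity $v_\alpha(\Law(V_t^\infty),\cdot)=v_{t,\infty}(\cdot)$ by estimating $\Ep{\|V_t^\infty-V_t^{R_i}\|_2^2\, I_{2^i}(t)}$ via Gr\"onwall (same driving $B$ throughout), deducing $W_2(\rho_t^{R_i},\rho_t^\infty)\to 0$, and applying the Wasserstein stability of $v_\alpha$.

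Your route---Prokhorov/Skorokhod on $\mathcal C([0,T^*];\mathbb R^d)$, identification of the limit as a weak solution through convergence of stochastic integrals, then freezing the marginal flow and solving the decoupled SDE to upgrade to strong via weak uniqueness---is the more ``classical'' McKean--Vlasov programme. It trades the lightweight Ascoli--Arzel\`a step for heavier path-space machinery (joint Skorokhod representation with the Brownian motions, stability of stochastic integrals under that joint convergence), and needs a separate weak-uniqueness argument to return to the prescribed filtered space. The paper's approach is more economical here precisely because all $V^R$ already live on the same space with the same $B$: it never leaves that space, compares $V^\infty$ with $V^{R_i}$ by a direct $L^2$ stability estimate, and obtains the strong solution in one step. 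Both arguments rest on exactly the same $R$-uniform inputs (Lemma \ref{lem:44} and Lemma \ref{lem:3}), so neither yields more than the other in terms of the final statement.
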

 \begin{remark}\label{rem:uniq}
{     In the limiting process in the proof below, when providing existence of solutions, we loose compactness. For this reason,  uniqueness of solution for \eqref{eq:SDEsimple} and its related Fokker--Planck equation remains an open issue, which does not seem to follow from standard stability arguments, even \revised{when} using some localization and approximation. The source of the problem is the estimate \eqref{eq:2626} where the constant $\Cnine$ depends on $\|v\|_2^2$.} 
 \end{remark}
	\begin{proof}
		Combine Lemma \ref{lem:2} and Lemma \ref{lem:3}, we have $\{v_{\alpha}(\rho_t^{\Bar{R}},\cdot)\}_{\Bar{R}>R,t\in[0,T^*]}$ is uniformly Lipschitz in the space variable for $v,w\in B_R(0)$ and uniformly bounded by inequality \eqref{eq:2626} and Lemma \ref{lem:3}. Thus for any fixed $t\in [0,T^*]$, {by \revised{the} Ascoli--Arzelà theorem} we can choose a subsequence $\{v_{\alpha}(\rho_t^{R_i},v)\}_{i\in \mathbb{N}}$ by a diagonal selection procedure, such that $R_i\geq 2^i$ and the limiting function denoted as  $v_{t,\infty}(\cdot)$~(of course it has the same Lipschitz constant as $v_{\alpha}(\rho_t^{\Bar{R}},\cdot)$ inside $B_R(0)$ for $\Bar{R}\geq R$) satisfies
		\begin{equation}\label{eq:5959}
			\normm{v_{\alpha}(\rho_t^{R_k},v)-v_{t,\infty}(v)}\leq \frac{1}{\exp(100T^*(\lambda^2T^*+\sigma^2d)(1+\Cfour(\alpha,2^i,\Cd,L,\varkappa)))2^i}, 
		\end{equation}
		for $\forall v\in B_{2^i}(0) ,k\geq i,$ where $\Cfour(\alpha,2^i,\Cd,L,\varkappa)$ comes from Lemma \ref{lem:2} and $\Cd$ comes from Lemma \ref{lem:3}~({The convenience of the cumbersome bound on the right-hand side  \eqref{eq:5959} will be clear below, see \revised{the} derivation of \eqref{eq:6767}}).
		
		By \cite[Corollary 6.3.1]{arnold1974stochastic}, the following SDE has a strong solution
		\begin{equation}
			dV^{\infty}_t=-\lambda(V^{\infty}_t-v_{t,\infty}(V^{\infty}_t))dt+\sigma\normm{V^{\infty}_t-v_{t,\infty}(V^{\infty}_t)}dB_t,
		\end{equation}
		and so its related Fokker--Planck equation has a weak solution $\rho_t^\infty$ for test functions $\phi\in C_c^{\infty}([0,T]\times \mathbb{R}^d)$.
		Next, we will show that 
		\begin{equation}
			{ v_\alpha(\rho_\cdot^\infty, \cdot)= \cT_{\infty}v_{\cdot,\infty}}=v_{\cdot,\infty},
		\end{equation}
		where \revised{the} map $\cT_{\infty}$ comes from  \eqref{eq:2121} by { formally} letting $R=\infty$,
		then it will be natural that \revised{the} SDE~\eqref{eq:SDEsimple} and its related Fokker--Planck equation {have respective}	solutions.
		
		Define $Z_t^i:=V^{\infty}_t-V_t^{R_i}$, then it satisfies
		\begin{equation}
			\begin{aligned}
				Z_t^i&=Z_0^i-\lambda \int_0^tZ_s^ids-\lambda\int_0^t(v_{s,\infty}(V^{\infty}_s)-v_{\alpha}(\rho_s^{R_i},\mathcal{P}_{R_i}(V_s^{R_i})))ds\\
				&\quad+\sigma\int_0^t\left[\normm{V^{\infty}_s-v_{s,\infty}(V^{\infty}_s)}-\normm{V_s^{R_i}-v_{\alpha}(\rho_s^{R_i},\mathcal{P}_{R_i}(V_s^{R_i}))}\right]dB_s,
			\end{aligned}
		\end{equation}
		define 
		\begin{equation}\label{eq:6666}
			I_M(t)=\begin{cases}
				1& \text{if $\max\{\normm{V_{\tau}},\normm{V_{\tau}^{R_i}}\}\leq M$ for all $\tau\in [0,t]$,}\\
				0&\text{otherwise,}
			\end{cases}
		\end{equation}
		then it is adapted to the natural filtration and has the property $I_M (t) = I_M (t)I_M (\tau )$ for all $\tau\in[0, t]$.
		With \revised{Jensen's inequality}, $\rho_0^{\infty}=\rho_0^{R_i}=\rho_0$, and It\^o isometry, we can derive 
		\begin{equation}
			\begin{aligned}
				\Ep{\normmsq{Z_t^i}I_{2^i}(t)}&\leq 100(\lambda^2t+\sigma^2d)\left[\int_0^t\Ep{\normmsq{Z_s^i}I_{2^i}(s)}ds\right.\\
				&\quad\left.+\int_0^t\Ep{\normmsq{v_{\alpha}(\rho_s^{R_i},\mathcal{P}_{R_i}(V_s^{R_i}))-v_{s,\infty}(V^{\infty}_s)}I_{2^i}(s)}ds\right]\\
			\end{aligned}
		\end{equation}
	 and
	\begin{equation}
		\begin{aligned}
			& \left\|v_\alpha\left(\rho_s^{R_i}, \mathcal{P}_{R_i}\left(V_s^{R_i}\right)\right)-v_{s, \infty}\left(V_s\right)\right\|_{\revised{2}} I_{2^i}(s) \\
			& =\left\|v_\alpha\left(\rho_s^{R_i}, V_s^{R_i}\right)-v_{s, \infty}\left(V_s^{\infty}\right)\right\|_{\revised{2}} I_{2^i}(s) \\
			& \leq\left\|v_\alpha\left(\rho_s^{R_i}, V_s^{R_i}\right)-v_{s, \infty}\left(V_s^{R_i}\right)\right\|_{\revised{2}} I_{2^i}(s)+\left\|v_{s, \infty}\left(V_s^{R_i}\right)-v_{s, \infty}\left(V_s^{\infty}\right)\right\|_{\revised{2}} I_{2^i}(s) \\
			& \leq \frac{1}{\exp(100T^*(\lambda^2T^*+\sigma^2d)(1+\Cfour(\alpha,2^i,\Cd,L,\varkappa)))2^i}+\Cfour(\alpha,2^i,\Cd,L,\varkappa)\left\|Z_s^i\right\|_{\revised{2}} I_{2^i}(s),
		\end{aligned}
	\end{equation}
	the last inequality is due to \eqref{eq:5959} and Lemma \ref{lem:2}. And so, by combining the last two inequalities, we have
	\begin{equation}
		\begin{aligned}
			\Ep{\normmsq{Z_t^i}I_{2^i}(t)}&\leq 100(\lambda^2T^*+\sigma^2d)(1+C(\alpha,2^i,D,L,\varkappa))\int_0^t\Ep{\normmsq{Z_s^i}I_{2^i}(s)}ds\\
			&\quad+200(\lambda^2T^*+\sigma^2d)T^*\frac{1}{\exp(100T^*(\lambda^2T^*+\sigma^2d)(1+\Cfour(\alpha,2^i,\Cd,L,\varkappa)))2^i},
		\end{aligned}
	\end{equation}
	hence, by \revised{Gr\"onwall's inequality}, we have
	\begin{equation}\label{eq:6767}
		\Ep{\normmsq{Z_t^i}I_{2^i}(t)}\leq \frac{\Csixteen(\lambda,\sigma,d,T^*)}{2^i}.
	\end{equation}
	By combining the above result and Lemma \ref{lem:3}, we can show $\lim_{i\to\infty}W_2^2(\rho_t^{R_i},\rho_t^{\infty})=0$, see Lemma \ref{lem:444} for the details. Finally, for any $v\in \mathbb{R}^d$, there is some $j$ such that $v\in B_{2^j}(0)$, and we have
	\begin{equation}\label{wasscntr}
		\begin{aligned}
			&\lim_{i\to\infty}\normm{\mathcal{T}_{R_{i}}v_{\alpha}(\rho_t^{{R_{i}}},v)-
				\mathcal{T}_{\infty}v_{t,\infty}(v)}\\
			&=\lim_{i\to\infty}\normm{v_{\alpha}
				(\rho_t^{R_{i}},v)-v_{\alpha}(\rho^{\infty}_t,v)}\\
			&\leq\lim_{i\to\infty} \Cseventeen(\alpha,\Cd,L,\varkappa)W_2(\rho_t^{R_{i}},\rho^{\infty}_t)=0,
		\end{aligned}
	\end{equation}
the first equality is due to \revised{the facts that} $v_{\alpha}(\rho_t^{{R_{i}}},v)$ is invariant under \revised{the} map $\cT_{R_i}$ and $v_{\alpha}(\rho^{\infty}_t,v):=	\mathcal{T}_{\infty}v_{t,\infty}(v)$, the last inequality is due to \cite[Lemma 3.2]{carrillo2018analytical}. So
\begin{equation}
	\mathcal{T}_{\infty}v_{t,\infty}(v)=\lim_{i\to\infty}\mathcal{T}_{R_{i}}v_{\alpha}(\rho_t^{R_{i}},v)=\lim_{i\to\infty}v_{\alpha}(\rho_t^{R_{i}},v)=v_{t,\infty}(v),
\end{equation}
and thus we finished the proof.
\end{proof}


	\begin{remark}\label{rmk:22}
		For the process $\bold{V}_t^{\infty}$, where $\bold{V}_t^{\infty}:=\sup_{s\in[0,t]}\normm{V^{\infty}_s}$,  
		we also have $\Ep{|\bold{V}_t^{\infty}|^4}\leq D$, since 
		\begin{equation}
			\normm{v_{t,\infty}(v)-v_{\alpha}(\rho_t^{R_i},v)}\leq \frac{1}{\exp(100T^*(\lambda^2T^*+\sigma^2d)(1+\Cfour(\alpha,2^i,\Cd,L,\varkappa)))2^i},
		\end{equation}
		and so
		\begin{equation}
			\normm{v_{t,\infty}(v)-V^*(v)}\leq \Cctr\normm{v-V^*(v)}+\Csmall,
		\end{equation}
		for some \revised{constants,} for convenience we will also denote them as $\Cctr,\Csmall$, and so the uniform bound can be derived by a similar analysis as in the proof of Lemma \ref{lem:3}.


	\end{remark}
	
	\begin{lemma}\label{lem:444}
		We have 
		\begin{equation}
			\lim_{i\to\infty}W_2^2(\rho_t^{R_i},\rho_t^{\infty})=0.
		\end{equation}
	\end{lemma}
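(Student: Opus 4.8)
The plan is to realize $W_2^2(\rho_t^{R_i},\rho_t^\infty)$ through the natural coupling furnished by the SDEs themselves and then pass to the limit using the estimates already assembled. Since $V_t^{R_i}$ and $V_t^\infty$ are driven by the same Brownian motion and share the initial law $\rho_0$, the pair $(V_t^{R_i},V_t^\infty)$ is an admissible coupling of $(\rho_t^{R_i},\rho_t^\infty)$, whence
\begin{equation}
W_2^2(\rho_t^{R_i},\rho_t^\infty)\leq \Ep{\normmsq{Z_t^i}},
\end{equation}
with $Z_t^i=V_t^\infty-V_t^{R_i}$ as in the proof of Theorem \ref{prop:22}. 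Thus it suffices to prove $\Ep{\normmsq{Z_t^i}}\to 0$ as $i\to\infty$.

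First I would split this second moment along the truncation event of \eqref{eq:6666},
\begin{equation}
\Ep{\normmsq{Z_t^i}}=\Ep{\normmsq{Z_t^i}\,I_{2^i}(t)}+\Ep{\normmsq{Z_t^i}\,(1-I_{2^i}(t))}.
\end{equation}
The first summand is controlled directly by \eqref{eq:6767}, which gives $\Ep{\normmsq{Z_t^i}\,I_{2^i}(t)}\leq \Csixteen(\lambda,\sigma,d,T^*)\,2^{-i}\to 0$. For the second summand I would apply Cauchy--Schwarz, $\Ep{\normmsq{Z_t^i}(1-I_{2^i}(t))}\leq \sqrt{\Ep{\normm{Z_t^i}^4}}\,\sqrt{\mathbb{P}(I_{2^i}(t)=0)}$, and bound each factor by the uniform fourth-moment estimates. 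Indeed $\normm{Z_t^i}^4\leq 8(|\bold{V}_t^\infty|^4+|\bold{V}_t^{R_i}|^4)$, so Lemma \ref{lem:3} and Remark \ref{rmk:22} give $\Ep{\normm{Z_t^i}^4}\leq 8(D+\Cd)$ uniformly in $i$; and a union bound followed by Markov's inequality applied to those same fourth moments yields $\mathbb{P}(I_{2^i}(t)=0)\leq \mathbb{P}(|\bold{V}_t^\infty|>2^i)+\mathbb{P}(|\bold{V}_t^{R_i}|>2^i)\leq (D+\Cd)\,2^{-4i}$. Combining, the second summand is at most $\sqrt{8}\,(D+\Cd)\,2^{-2i}\to 0$, so $\Ep{\normmsq{Z_t^i}}\to 0$ and the lemma follows.

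I do not anticipate a genuine obstacle here: the only point that truly matters is that the constants in \eqref{eq:6767}, in Lemma \ref{lem:3}, and in Remark \ref{rmk:22} are all independent of $i$ (equivalently of $R_i$), which is precisely how those statements were set up. Granting that, the whole argument is the truncation device of \eqref{eq:6666} combined with Cauchy--Schwarz and Markov's inequality, and no new estimate on the dynamics is needed.
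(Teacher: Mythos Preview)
Your proof is correct and follows essentially the same route as the paper: both use the synchronous coupling $(V_t^{R_i},V_t^\infty)$, control the truncated part by \eqref{eq:6767}, and handle the complement via the uniform fourth-moment bounds of Lemma~\ref{lem:3} and Remark~\ref{rmk:22}. The only cosmetic difference is that the paper splits $\{I_{2^i}(t)=0\}$ into three explicit cases and bounds each directly, whereas you collapse the tail into one term via Cauchy--Schwarz and Markov's inequality; the ingredients and the rates obtained are the same.
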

	\begin{proof} We have
		\begin{equation}
		\begin{aligned} W_2^2(\rho_t^{R_i},\rho_t^{\infty})&\leq\Ep{\normmsq{Z_t^i}I_{2^i}(t)}+\Ep{\normmsq{V_t^{R_i}-V_t^{\infty}}{\zeta}_{\text{$\bold{V}_t^{R_i}\leq 2^i$ and $\bold{V}_t^{\infty}> 2^i$}}}\\
		&\quad+\Ep{\normmsq{V_t^{R_i}-V_t^{\infty}}\zeta_{\text{$\bold{V}_t^{R_i}> 2^i$ and $\bold{V}_t^{\infty}\leq 2^i$}}}\\
		&\quad+\Ep{\normmsq{V_t^{R_i}-V_t^{\infty}}\zeta_{\text{$\bold{V}_t^{R_i}> 2^i$ and $\bold{V}_t^{\infty}> 2^i$}}},
		\end{aligned}
		\end{equation}
		where $I_{2^i}(t)$ is from \eqref{eq:6666}, and {$\zeta$ is the characteristic function of the underlying set}.
		For the first term we have it converging to $0$ as $i\to\infty$ by \eqref{eq:6767}. For the second term, we have
		\begin{equation}
		\begin{aligned}
		\Ep{\normmsq{V_t^{R_i}-V_t^{\infty}}\zeta_{\text{$\bold{V}_t^{R_i}\leq 2^i$ and $\bold{V}_t^{\infty}> 2^i$}}}&\leq 2\Ep{(\normmsq{V_t^{\infty}}+4^i)\zeta_{\bold{V}_t^{\infty}>2^i}}\\
		&\leq 2\Ep{(\normmsq{\bold{V}_t^{\infty}}+4^i)\zeta_{\bold{V}_t^{\infty}>2^i}}\\
		&\leq 4\Ep{\normmsq{\bold{V}_t^{\infty}}\zeta_{\bold{V}_t^{\infty}>2^i}}\\
		&\leq \frac{\Cd}{4^{i-1}}\to 0\quad \text{as $i\to\infty$},
		\end{aligned}
		\end{equation}
		since we have $\Ep{\normm{\bold{V}_t^{\infty}}^4}\leq \Cd$ by Remark \ref{rmk:22} and so
		\begin{equation}
		\begin{aligned}
		\Ep{\normmsq{\bold{V}_t^{\infty}}\zeta_{\bold{V}_t^{\infty}>2^i}}&=\frac{\Ep{\normmsq{\bold{V}_t^{\infty}}4^i\zeta_{\bold{V}_t^{\infty}>2^i}}}{4^i}\\
		&\leq \frac{\Ep{\normm{\bold{V}_t^{\infty}}^4\zeta_{\bold{V}_t^{\infty}>2^i}}}{4^i}\\
		&\leq \frac{\Ep{\normm{\bold{V}_t^{\infty}}^4}}{4^i}\\
		&\leq\frac{\Cd}{4^i}.
		\end{aligned}
		\end{equation}
		Similarly, we have the third and fourth terms converge to $0$ as $i\to\infty$.
	\end{proof}
	\subsection{Large time asymptotics}\label{sec:LTA1}
 \revised{In this section, we will show the SDE \eqref{eq:SDEsimple} will concentrate around the set of global minimizers as $t\to\infty$, which is formally stated in Theorem~\ref{prop:33}. To prove it, we first show an auxilariy differential inequality, namely~\eqref{eq:8181}.}
 
	As in the proof of Lemma \ref{lem:44}, under Assumption \ref{asp:22}, we have
	\begin{equation}
		\sup_{u\in B_r(V^*(v))}\normm{A(u,v)-A(V^*(v),v)}\leq \Celeven(1+\normmsq{v})r, \text{ where $r\in (0,1)$},
	\end{equation}
 and 
	\begin{equation}\label{eq:7979}
		\begin{aligned}
			\normmsq{v_{\alpha}(\rho_t,v)-V^*(v)}&\leq \Ctwelve\left[r\normmsq{v-V^*(v)}+r+E^2_{t,r,p,q,\sml}(\alpha,\varkappa)\int \normmsq{v-V^*(v)}d\rho_t(v)\right]+2q,
		\end{aligned}
	\end{equation}
	where the function $E_{t,r,p,q,\sml}(\alpha,\varkappa)$ is given by 
 \begin{equation}
     E_{t,r,p,q,\sml}(\alpha,\varkappa):=\frac{e^{-\alpha (q-2(2\varkappa/\sml^2)^{1/(p-1)})}}{\inf_{w\in\cV}\int_{B_r(w)}\phi_r^{\tau}(u-w)\rho_t(u)}.
 \end{equation}
	\revised{In the last section, we have proved the SDE~\eqref{eq:SDEsimple} has a strong solution, so we can apply It\^o's formula to the function $\normmsq{v-V^*(v)}$ and the process $V_t$,} then using the proof of Lemma \ref{lem:66}, we have 
\begin{equation}\label{eq:95}
		\begin{aligned}
			&\frac{d}{dt}\int\normmsq{v-V^*(v)}d\rho_t(v)\\
			&\leq-(\lambda-2\sigma^2d)\int\normmsq{v-V^*(v)}d\rho_t(v)+(\lambda+2\sigma^2d)\int\normmsq{V^*(v)-v_{\alpha}(\rho_t,v)}d\rho_t(v).
  	\end{aligned}
	\end{equation} 
Using \eqref{eq:7979}, we further have
	\begin{equation}\label{eq:8080}
		\begin{aligned}
			&\frac{d}{dt}\int\normmsq{v-V^*(v)}d\rho_t(v)\\
			&\leq-(\lambda-2\sigma^2d)\int\normmsq{v-V^*(v)}d\rho_t(v)+(\lambda+2\sigma^2d)\int\normmsq{V^*(v)-v_{\alpha}(\rho_t,v)}d\rho_t(v)\\
			&\leq-[\lambda-2\sigma^2d-\Ctwelve r(\lambda+2\sigma^2d)]\int\normmsq{v-V^*(v)}d\rho_t(v)\\
			&\quad+(\lambda+2\sigma^2d)\Ctwelve\left[r+E^2_{t,r,p,q,\sml}(\alpha,\varkappa)\int \normmsq{v-V^*(v)}d\rho_t(v)\right]+2(\lambda+\sigma^2d)q\\
			&\leq -\Cexp\int\normmsq{v-V^*(v)}d\rho_t(v)+\Cerr\left[E^2_{t,r,p,q,\sml}(\alpha,\varkappa)\int \normmsq{v-V^*(v)}d\rho_t(v)+r\right]\\
   &\quad+2(\lambda+\sigma^2d)q,
		\end{aligned}
	\end{equation}
	where $\Cexp:=\lambda-2\sigma^2d-\Ctwelve r(\lambda+2\sigma^2d)$ depends on $\lambda,\sigma,d,L,\varkappa,\operatorname{diam}(\cV)$~(\revised{as before, we need to choose $\lambda,\sigma, r$ such $\Cexp>0$}) and $\Cerr:=(\lambda+2\sigma^2d)\Ctwelve$ depends on $\lambda,\sigma,d,L,\varkappa,\operatorname{diam}(\cV)$. For simplicity, we denote $H_{t,r,p,q,\sml}(\alpha,\varkappa,\mathcal{V}_t):=\Cerr\left[E^2_{t,r,p,q,\sml}(\alpha,\varkappa)\cv_t+r\right]+2(\lambda+\sigma^2d)q,$\\
 $ \cv_t:=\int\normmsq{v-V^*(v)}d\rho_t(v)$, then under these new notation, we have 
	\begin{equation}\label{eq:8181}
		\frac{d}{dt}\mathcal{V}_t\leq -\Cexp\mathcal{V}_t+H_{t,r,p,q,\sml}(\alpha,\varkappa,\mathcal{V}_t).
	\end{equation}
	
	The following result can be adapted from \cite[Proposition 4.6]{B1-fornasier2021global} or from \cite[Lemma 16]{fornasier2023consensus1} and we will omit the proof~(the proof is exactly the same, only need to change $v_{\alpha}(\rho_t)$ in their proofs).
	\begin{lemma} \label{lem:5l}Assume \eqref{eq:1919}~($R$ can be $\infty$) has a strong solution for any $t\in [0,T^*]$. Denote $B(w):= \sup_{t\in[0,T^*],w'\in B_r(w)}\normm{v_{\alpha}(\rho^R_t,\cP(w'))-w}$, then we have 
  \begin{equation}\label{eq:8855}
			\int_{B_r(w)}\phi_r^{\tau}(u-w)d\rho_t^R(u)\geq e^{-q't}\int_{B_r(w)}\phi_r^{\tau}(u-w)d\rho_0^R(u),\quad \forall t\in[0,T^*], \forall w\in \mathbb{R}^d,
		\end{equation}
  where $q'$ only depends on $r,\lambda,\sigma,B(w),d,\tau$~(for example, we can choose $\tau=d+2$). And thus we have 
		\begin{equation}\label{eq:88555}
			\inf_{w\in \cV}\int_{B_r(w)}\phi_r^{\tau}(u-w)d\rho_t^R(u)\geq e^{-q't}\inf_{w\in \cV}\int_{B_r(w)}\phi_r^{\tau}(u-w)d\rho_0^R(u),\quad \forall t\in[0,T^*],
		\end{equation}
   where now $q'$ only depends on $r,\lambda,\sigma,B:=\sup_{w\in\cV}B(w),d,\tau$~(for example, we can choose $\tau=d+2$).
		
	\end{lemma}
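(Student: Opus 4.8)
The plan is to reconstruct the argument of \cite[Proposition 4.6]{B1-fornasier2021global} (see also \cite[Lemma 16]{fornasier2023consensus1}), the only new ingredient being the spatial argument carried by the consensus point. Fix $w\in\mathbb{R}^d$ and write $g(s):=1+(\tau-1)s^\tau-\tau s^{\tau-1}$, so that $\phi_r^\tau(v-w)=g(\normm{v-w}/r)$ on $B_r(w)$ and $\phi_r^\tau\equiv 0$ outside, with $g(1)=g'(1)=0$, $g''(1)=\tau(\tau-1)>0$, and $g$ strictly decreasing on $[0,1]$. First I would apply It\^o's formula to the $C^1$ function $v\mapsto\phi_r^\tau(v-w)$ (whose second derivatives are bounded a.e.\ for $\tau\geq 3$) along the strong solution $V_t^R$ of \eqref{eq:1919}, exactly as is done in Lemma \ref{lem:66} (here $\cP=\mathrm{id}$ when $R=\infty$). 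This yields
\begin{equation*}
\frac{d}{dt}\int\phi_r^\tau(v-w)\,d\rho_t^R(v)=\int\Big[-\lambda\big(v-v_\alpha(\rho_t^R,\cP(v))\big)\cdot\nabla\phi_r^\tau(v-w)+\tfrac{\sigma^2}{2}\normmsq{v-v_\alpha(\rho_t^R,\cP(v))}\Delta\phi_r^\tau(v-w)\Big]d\rho_t^R(v)=:\int\mathcal{I}(v)\,d\rho_t^R(v),
\end{equation*}
where, with $s:=\normm{v-w}/r<1$, one computes
\begin{equation*}
\nabla\phi_r^\tau(v-w)=-\frac{\tau(\tau-1)}{r}s^{\tau-2}(1-s)\frac{v-w}{\normm{v-w}},\qquad \Delta\phi_r^\tau(v-w)=\frac{\tau(\tau-1)}{r^2}s^{\tau-3}\big[(\tau+d-2)s-(\tau+d-3)\big],
\end{equation*}
both supported in $B_r(w)$, both bounded, and $\Delta\phi_r^\tau(v-w)\geq 0$ exactly on $\{s\geq s_0\}$ with $s_0:=\tfrac{\tau+d-3}{\tau+d-2}\in(0,1)$.

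The core step is the pointwise inequality $\mathcal{I}(v)\geq -q'\phi_r^\tau(v-w)$ for all $v$, with $q'=q'(r,\lambda,\sigma,B(w),d,\tau)$. Both sides vanish off $B_r(w)$, and on $B_r(w)$ one has $\normm{v-v_\alpha(\rho_t^R,\cP(v))}\leq\normm{v-w}+\normm{v_\alpha(\rho_t^R,\cP(v))-w}\leq r+B(w)$ directly from the definition of $B(w)$ --- this is the only place where the spatial argument of $v_\alpha$ enters. I would split $B_r(w)$ at a threshold $s_1\in(s_0,1)$ chosen so that $\Delta\phi_r^\tau(v-w)\geq c_{d,\tau}/r^2>0$ on $\{s_1\leq s<1\}$. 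On the interior piece $\{s\leq s_1\}$ one has $\phi_r^\tau(v-w)=g(s)\geq g(s_1)>0$, so the crude bound $\abs{\mathcal{I}(v)}\leq\lambda(r+B(w))\sup\abs{\nabla\phi_r^\tau}+\tfrac{\sigma^2}{2}(r+B(w))^2\sup\abs{\Delta\phi_r^\tau}$ already gives $\mathcal{I}(v)\geq -q_1'\phi_r^\tau(v-w)$.

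On the boundary piece $\{s_1\leq s<1\}$ I would use $\Delta\phi_r^\tau\geq 0$ together with the elementary inequality $\normmsq{v-v_\alpha}\geq\big((v-v_\alpha)\cdot\tfrac{v-w}{\normm{v-w}}\big)^2=(rs-\beta)^2$, where $\beta:=(v_\alpha(\rho_t^R,\cP(v))-w)\cdot\tfrac{v-w}{\normm{v-w}}$: one has $-\lambda(v-v_\alpha)\cdot\nabla\phi_r^\tau=\tfrac{\lambda\tau(\tau-1)}{r}s^{\tau-2}(1-s)(rs-\beta)\geq -\tfrac{\lambda\tau(\tau-1)}{r}(1-s)\abs{rs-\beta}$ while $\tfrac{\sigma^2}{2}\normmsq{v-v_\alpha}\Delta\phi_r^\tau\geq\tfrac{\sigma^2 c_{d,\tau}}{2r^2}(rs-\beta)^2$, so a Young inequality absorbs the drift contribution into the diffusion contribution up to a remainder $\lesssim\tfrac{\lambda^2}{\sigma^2}(1-s)^2$; since $g(s)\gtrsim(1-s)^2$ on $[s_1,1]$ (because $g(1)=g'(1)=0$ and $g''(1)>0$), this gives $\mathcal{I}(v)\geq -q_2'\phi_r^\tau(v-w)$ with $q_2'$ depending only on $\lambda,\sigma,d,\tau$. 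Then $q':=q_1'+q_2'$ proves the pointwise bound.

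With the pointwise bound in hand, the first display gives $\tfrac{d}{dt}\int\phi_r^\tau(v-w)\,d\rho_t^R\geq -q'\int\phi_r^\tau(v-w)\,d\rho_t^R$, and Gr\"onwall's inequality is exactly \eqref{eq:8855}. For \eqref{eq:88555} I would rerun everything with $B(w)$ replaced by the uniform bound $B=\sup_{w\in\cV}B(w)$, so that $q'$ becomes $w$-independent, and then take the infimum over $w\in\cV$ on both sides of $\int\phi_r^\tau(v-w)\,d\rho_t^R\geq e^{-q't}\int\phi_r^\tau(v-w)\,d\rho_0^R$, pulling the $w$-independent factor $e^{-q't}$ out of the infimum; $\tau=d+2$ is an admissible choice. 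The main obstacle is the boundary piece $\{s_1\leq s<1\}$: there $\phi_r^\tau$ vanishes quadratically while $\nabla\phi_r^\tau$ vanishes only linearly, so the drift contribution is a priori not controlled by $\phi_r^\tau$ and can even carry the unfavourable sign once $B(w)\geq r$; what rescues the estimate --- and what the polynomial mollifier $\phi_r^\tau$ is purpose-built for --- is that near $\partial B_r(w)$ the diffusion term is nonnegative and, since $\normmsq{v-v_\alpha}$ degenerates no faster than $(rs-\beta)^2$, it controls the square of the drift contribution, leaving a remainder comparable to $\phi_r^\tau$. This bookkeeping is the one genuinely delicate computation and is carried out verbatim as in \cite{B1-fornasier2021global,fornasier2023consensus1}, the spatial argument of $v_\alpha$ intervening only through $B(w)$.
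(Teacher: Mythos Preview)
The proposal is correct and follows exactly the approach the paper indicates: the paper omits the proof entirely, stating only that it is adapted from \cite[Proposition 4.6]{B1-fornasier2021global} or \cite[Lemma 16]{fornasier2023consensus1} with the sole change being the replacement of $v_\alpha(\rho_t)$ by $v_\alpha(\rho_t,\cP(\cdot))$. Your reconstruction of that argument is accurate, including the split at a threshold $s_1\in(s_0,1)$, the crude interior bound, and the delicate boundary-layer step where the nonnegative diffusion absorbs the drift via Young's inequality against the quadratic vanishing $g(s)\gtrsim(1-s)^2$; you also correctly identify that the spatial dependence of $v_\alpha$ enters only through $B(w)$, which is precisely the paper's remark.
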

	\begin{remark}
		\label{rmk:7}
		The above lemma can also be applied to SDE \eqref{eq:n1} and \eqref{eq:92929292} with $\kappa>0$; actually $\kappa>0$ is a good term to lower bound the left hand side of inequality \eqref{eq:8855}\revised{, since with positive $\kappa$, there will always be particles diffused into $B_r(w)$ such that $\rho_t^{R}(B_r(w))>0$. }
	\end{remark}
	With the above preparation, we can now show the main convergence result of this section.
	
	\begin{theorem}\label{prop:33}Under Assumption \ref{asp:11}, Assumption \ref{asp:22} and Assumption \ref{asp:33}, for any error bound $\epsilon>0$, denote 
		$T^*:=\frac{2}{\Cexp}\log(\frac{\cv_0\vee (2\epsilon)}{2\epsilon})$, where $\Cexp$ is from \eqref{eq:8080}. Then for the SDE \eqref{eq:SDEsimple}, we can choose $\varkappa$ small enough and $\alpha$ large enough and with this $\varkappa,\alpha$, we have 
  \begin{equation}\label{eq:decay1}
\mathcal{V}_{t}\leq (\Cexp\epsilon t+\mathcal{V}_0)e^{-\Cexp t} \text{  for all } t \in [0,T^*],
  \end{equation}
  and
    \begin{equation}
  \mathcal{V}_{T^*}\leq2\epsilon.
  \end{equation}
	\end{theorem}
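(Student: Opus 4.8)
The plan is to combine the differential inequality~\eqref{eq:8181} with a Gr\"onwall-type argument, after first using the auxiliary results to make the ``error term'' $H_{t,r,p,q,\sml}(\alpha,\varkappa,\mathcal{V}_t)$ as small as we wish on the fixed horizon $[0,T^*]$. First I would record that, by Theorem~\ref{prop:22}, the SDE~\eqref{eq:SDEsimple} has a strong solution on $[0,T^*]$ with law $\rho_t$ solving~\eqref{eq:FPsimple}, and that the uniform fourth-moment bound from Lemma~\ref{lem:3} (in its $R=\infty$ form, Remark~\ref{rmk:22}) passes to the limit, so $\Ep{|\bold V_t^\infty|^4}\le \Cd$ on $[0,T^*]$. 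This moment bound is what feeds Lemma~\ref{lem:5l}: the constant $B:=\sup_{w\in\cV}\sup_{t\in[0,T^*],w'\in B_r(w)}\normm{v_\alpha(\rho_t,w')-w}$ is finite (via~\eqref{eq:2626} and the moment bound), so $\inf_{w\in\cV}\int_{B_r(w)}\phi_r^{\tau}(u-w)\,d\rho_t(u)\ge e^{-q'T^*}\inf_{w\in\cV}\int_{B_r(w)}\phi_r^{\tau}(u-w)\,d\rho_0(u)=:c_0>0$, uniformly in $t\in[0,T^*]$, where the positivity of $c_0$ uses $\rho_0\in\mathcal{C}_b^2$ so that $\rho_0$ charges every ball. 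Consequently $E_{t,r,p,q,\sml}(\alpha,\varkappa)\le c_0^{-1}e^{-\alpha(q-2(2\varkappa/\sml^2)^{1/(p-1)})}$ on $[0,T^*]$.

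Next I would fix the parameters in the order dictated by the statement. Choose $r\in(0,1)$, then $\lambda,\sigma$ (or just $r$ if $\lambda,\sigma$ are given) small enough that $\Cexp=\lambda-2\sigma^2 d-\Ctwelve r(\lambda+2\sigma^2 d)>0$ and moreover $\Cexp\ge$ the quantity appearing in~\eqref{ieq:144} (the ``rule of thumb'' $\lambda>3\sigma^2 d$ suffices). Then choose $q:=3(2\varkappa/\sml^2)^{1/(p-1)}$, so $q-2(2\varkappa/\sml^2)^{1/(p-1)}=(2\varkappa/\sml^2)^{1/(p-1)}>0$; note $q\to0$ as $\varkappa\to0$. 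Pick $\varkappa$ small enough (depending on $\lambda,\sigma,d,\sml$, $\Cerr$, $\epsilon$) that $\Cerr r+2(\lambda+\sigma^2 d)q\le \tfrac{\Cexp\epsilon}{2}$. Finally pick $\alpha$ large enough (depending on $\rho_0,r,q,\varkappa,\sigma,\lambda,d,T^*,\operatorname{diam}(\cV),L$) that $\Cerr\,c_0^{-2}e^{-2\alpha(q-2(2\varkappa/\sml^2)^{1/(p-1)})}\le \tfrac{\Cexp}{2}$. With these choices, on $[0,T^*]$,
\begin{equation*}
H_{t,r,p,q,\sml}(\alpha,\varkappa,\mathcal{V}_t)=\Cerr\!\left[E^2_{t,r,p,q,\sml}(\alpha,\varkappa)\mathcal{V}_t+r\right]+2(\lambda+\sigma^2 d)q\le \tfrac{\Cexp}{2}\mathcal{V}_t+\tfrac{\Cexp\epsilon}{2},
\end{equation*}
hence~\eqref{eq:8181} gives $\frac{d}{dt}\mathcal{V}_t\le -\tfrac{\Cexp}{2}\mathcal{V}_t+\tfrac{\Cexp\epsilon}{2}$ for all $t\in[0,T^*]$. (If one prefers the sharper statement~\eqref{eq:decay1} with $\Cexp$ rather than $\Cexp/2$, absorb the factor by relabelling, or split the smallness differently; the argument is identical.)

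Then I would integrate this scalar linear differential inequality. Writing $g(t):=\mathcal{V}_t$, the bound $g'\le-\Cexp g+\Cexp\epsilon$ and Gr\"onwall's lemma give $g(t)\le (g(0)-\epsilon)e^{-\Cexp t}+\epsilon$ when $g(0)\ge\epsilon$; in either case one obtains the crude but sufficient estimate $\mathcal{V}_t\le(\Cexp\epsilon t+\mathcal{V}_0)e^{-\Cexp t}$, which follows since $\frac{d}{dt}\big[(\Cexp\epsilon t+\mathcal{V}_0)e^{-\Cexp t}\big]=\Cexp\epsilon e^{-\Cexp t}-\Cexp(\Cexp\epsilon t+\mathcal{V}_0)e^{-\Cexp t}$ dominates the right-hand side of the differential inequality by a standard comparison argument. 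For the endpoint bound, plug in $T^*=\frac{2}{\Cexp}\log\big(\frac{\mathcal{V}_0\vee(2\epsilon)}{2\epsilon}\big)$: then $e^{-\Cexp T^*}=\big(\frac{2\epsilon}{\mathcal{V}_0\vee(2\epsilon)}\big)^2$, so $\mathcal{V}_0 e^{-\Cexp T^*}\le \frac{(2\epsilon)^2}{\mathcal{V}_0\vee(2\epsilon)}\le 2\epsilon$, and $\Cexp\epsilon T^* e^{-\Cexp T^*}=2\epsilon\log\big(\tfrac{\mathcal{V}_0\vee(2\epsilon)}{2\epsilon}\big)\cdot\big(\tfrac{2\epsilon}{\mathcal{V}_0\vee(2\epsilon)}\big)^2\le 2\epsilon\sup_{x\ge1}\frac{\log x}{x^2}\le 2\epsilon$ (since $\sup_{x\ge1}\frac{\log x}{x^2}=\frac{1}{2e}<1$); more crudely one may just enlarge $T^*$ or $\alpha$ to absorb a harmless constant. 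Summing, $\mathcal{V}_{T^*}\le 2\epsilon$, possibly after a cosmetic adjustment of the multiplicative constant or a further increase of $\alpha$.

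The one genuinely delicate point — everything else being bookkeeping — is the uniform-in-time lower bound $\inf_{t\in[0,T^*]}\inf_{w\in\cV}\rho_t(B_r(w))\ge c_0>0$, i.e.\ that the mass near the minimizer set does not collapse before time $T^*$. This is exactly what Lemma~\ref{lem:5l} delivers, but its hypothesis requires the finiteness of $B=\sup_{w\in\cV}\sup_{t,w'}\normm{v_\alpha(\rho_t,w')-w}$, which in turn rests on the fourth-moment bound surviving the $R\to\infty$ limit (Remark~\ref{rmk:22}); I would make sure to invoke those two facts explicitly rather than the $R$-truncated versions. Note also the subtlety that $c_0$, hence the required $\alpha$, depends on $T^*$, which depends on the unknown $\mathcal{V}_0$ — this is why the theorem is stated as ``choose $\alpha$ large enough'' for the given $T^*$, and Remark~\ref{rem:estT} is what supplies an a priori surrogate for $\mathcal{V}_0$. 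Finally, the existence/regularity claim in the theorem statement is immediate from Theorem~\ref{prop:22}, so no new work is needed there.
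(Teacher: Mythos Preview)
Your overall strategy matches the paper's: invoke the moment bound (Remark~\ref{rmk:22}) to feed Lemma~\ref{lem:5l}, obtain a uniform-in-$t$ lower bound on the mass near $\cV$, make $H$ small, and apply Gr\"onwall to~\eqref{eq:8181}. One minor difference: the paper first records $\sup_{t\in[0,T^*]}\mathcal{V}_t<\infty$ (Lemma~\ref{lem:3} in the $R\to\infty$ limit) and then chooses $\alpha$ large enough that $H\le\Cexp\epsilon$ is a \emph{constant}, whereas you absorb $\Cerr E^2\mathcal{V}_t$ into the linear term. Your parenthetical ``split the smallness differently'' correctly alludes to the paper's route, so this is only a stylistic deviation.

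There is, however, a genuine gap in your comparison argument for~\eqref{eq:decay1}. You assert that $h(t):=(\Cexp\epsilon t+\mathcal{V}_0)e^{-\Cexp t}$ is an upper barrier for $g'\le-\Cexp g+\Cexp\epsilon$, but it is not: one computes $h'+\Cexp h=\Cexp\epsilon\,e^{-\Cexp t}<\Cexp\epsilon$ for $t>0$, so $h$ is a \emph{sub}solution, not a supersolution. In fact the sharp Gr\"onwall bound $g(t)\le(\mathcal{V}_0-\epsilon)e^{-\Cexp t}+\epsilon$, which you correctly state first, strictly exceeds $h(t)$ for every $t>0$ (equivalently $e^s-1>s$ for $s>0$), and the exact ODE solution saturates the Gr\"onwall bound; hence~\eqref{eq:decay1} as literally written does not follow from $g'\le-\Cexp g+\Cexp\epsilon$. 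The paper itself just writes ``by Gr\"onwall'' at this step, so this appears to be a slip there as well. The remedy is simply to replace the displayed decay estimate by the correct one, $\mathcal{V}_t\le(\mathcal{V}_0-\epsilon)e^{-\Cexp t}+\epsilon$, which carries the same exponential-concentration content and still yields the endpoint bound: writing $y=2\epsilon/\mathcal{V}_0\in(0,1)$ when $\mathcal{V}_0>2\epsilon$, one gets $\mathcal{V}_{T^*}\le\epsilon(1+2y-y^2)\le 2\epsilon$. Your own endpoint algebra and the paper's (via $\log x+x\le x^2$ for $x\ge1$) are both calibrated to the incorrect barrier $h$, so they should be replaced by this computation.
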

	{ \begin{remark}\label{rem:estT} While \eqref{eq:decay1} ensures that $\mathcal{V}_{t}$ decays exponentially until the target time $T^*$ and accuracy $2 \epsilon$ are reached, our argument does not allow to ensure that after the horizon time $T^*$ the functional $\mathcal{V}_{t}$ will continue to decay. We also cannot give $T^*$ precisely, because the value of $\mathcal{V}_0$ may be unknown a priori. However, as we expect from \eqref{eq:7979} that $\normm{v_\alpha(\rho_t,v)- V^*(v)}\leq \Cctr\normm{v}+\Csmall$, where $\Cctr,\Csmall$ are two very small numbers by choosing $\varkappa,q,r$ small enough and $\alpha$ large enough, for any $v$, $\mathcal{V}_0$ can be estimated by    
 \begin{equation}\label{eq:estV0}
 \mathcal{V}_0 = \int \|v - V^*(v)\|_2^2 d \rho_0(v) \approx \int \|v - v_\alpha(\rho_t,v)\|_2^2 d \rho_0(v),
 \end{equation}
 for 
 $\alpha>0$ large. The approximation in \eqref{eq:estV0} is more accurate for $\rho_0$  concentrated, possibly of compact support, for $\alpha>0$ large, and for $t>0$ large. In fact, the larger $t>0$, the more $\rho_t$ results concentrated on $\cV$, and $E_{t,r,p,q,\sml}$ may get smaller; note also that the bound \eqref{eq:8855} is quite pessimistic.  For $r>0$ small, hence $\alpha>0$ larger, the constant $\Cexp\approx \lambda-2\sigma^2d$. With estimates for $\cv_0$ and $\Cexp$, the horizon time $T^* = \frac{2}{\Cexp}\log(\frac{\cv_0\vee (2\epsilon)}{2\epsilon})$ remains approximately determined as well. 
 \end{remark}}
	\begin{proof}(of Theorem \ref{prop:33}.)
		By Lemma \ref{lem:44} with $R\to\infty$ or Remark \ref{rmk:22}, we have 
		\begin{equation}
			 \sup_{t\in[0,T^*]}\sup_{v\in \cV+B_r(0)}\normm{V^*(v)-v_{\alpha}(\rho_t,v)}<\infty,
		\end{equation}
		and so by Lemma \ref{lem:5l}, we have
		\begin{equation}
			\inf_{t\in [0,T^*]}{\inf_{w\in\cV}\int_{B_r(w)}\phi_r^{\tau}(u-w)\rho_t(u)}>0.
		\end{equation}
		By Lemma \ref{lem:3} with $R\to\infty$, we have
		\begin{equation}
			\sup_{t\in [0,T^*]}\mathcal{V}_t<\infty,
		\end{equation}
		by \eqref{eq:8181}, we have
		\begin{equation}
			\frac{d}{dt}\mathcal{V}_t\leq -\Cexp\mathcal{V}_t+H_{t,r,p,q,\sml}(\alpha,\mathcal{V}_t),
		\end{equation}
		where by definition $\mathcal{V}_t=\int\normmsq{v-V^*(v)}d\rho_t(v)$, $H_{t,r,p,q,\sml}(\alpha,\varkappa,\mathcal{V}_t)=\Cerr\left[r+(E_{t,r,p,q,\sml}(\alpha))^2\mathcal{V}_t\right]+2(\lambda+2\sigma^2d)q$ and $E_{t,r,p,q,\sml}(\alpha,\varkappa)=\frac{e^{-\alpha (1-2(2\varkappa/\sml^2)^{1/(p-1)})}}{\inf_{w\in\cV}\int_{B_r(w)}\phi_r^{\tau}(u-w)\rho_t(u)},$ so we can choose $\varkappa,q,r$ small enough and $\alpha$ large enough such that
		\begin{equation}
			H_{t,r,p,q,\sml}(\alpha,\mathcal{V}_t)\leq {\Cexp\epsilon},
		\end{equation}
		then on $[0,T^*]$, we have
		\begin{equation}
			\frac{d}{dt}\mathcal{V}_t\leq-\Cexp\mathcal{V}_t+\Cexp\epsilon
		\end{equation}
		then by \revised{Gr\"onwall's inequality}, we have
  \begin{equation}\label{eq:in106}
\mathcal{V}_{t}\leq (\Cexp\epsilon t+\mathcal{V}_0)e^{-\Cexp t} \text{  for all } t \in [0,T^*],
  \end{equation}
  and
    \begin{equation}\label{eq:in107}
  \mathcal{V}_{T^*}\leq2\epsilon.
  \end{equation}
  \revised{The last inequality is derived by inserting the definition of $T^*$ into \eqref{eq:in106}, then we have \begin{equation}
      (\Cexp\epsilon T^*+\mathcal{V}_0)e^{-\Cexp T^*}=(2\epsilon\log(\frac{\mathcal{V}_0\vee (2\epsilon)}{2\epsilon})+\mathcal{V}_0)\left(\frac{2\epsilon}{\mathcal{V}_0\vee (2\epsilon)}\right)^2,
  \end{equation}
  	so to derive \eqref{eq:in107}, it is enough to show 
  	\begin{equation}
  	    (2\epsilon\log(\frac{\mathcal{V}_0\vee (2\epsilon)}{2\epsilon})+\mathcal{V}_0)\left(\frac{2\epsilon}{\mathcal{V}_0\vee (2\epsilon)}\right)^2\leq 2\epsilon
  	\end{equation}
  	or equivalently
  	\begin{equation}
  	    \log(\frac{\mathcal{V}_0\vee (2\epsilon)}{2\epsilon})+\frac{\mathcal{V}_0}{2\epsilon}\leq \left(\frac{\mathcal{V}_0\vee (2\epsilon)}{2\epsilon}\right)^2,
  	\end{equation}
  	this is true, since $\log(x)+x\leq 2x-1\leq x^2,\forall x\geq 1.$
  }
	\end{proof}
	
	\section{The case  $\cV=\cup_{i=1}^{\revised{\mm}} \cV_i$}\label{sec:2}
In this section we discuss the case when $\cV=\cV_1\cup\cV_2$ for simplicity, however the analysis can be naturally applied to the case when $\cV=\cup_{i=1}^{\revised{\mm}}\cV_i$ with $\revised{\mm}>2$. In this section $\kappa>0$, and this choice of $\kappa$ can help the dynamic\revised{s}~\eqref{eq:n1} escape from $\partial\zonea$,  \revised{for the function example in the paragraph above \eqref{eq:n1}, if $\kappa=0$ and $\rho_0^0$ is symmetric about $v=0$, then if $V_t^0$ starts from $0$, which is $\partial\zonea$ in this example, it will always stay at $0$, however $0$ is not a global minimizer of $f$, if $\kappa>0$, then $V_t^{\kappa}$ which starts from $0$ has chance to escape from $0$ and converge to $-1$ or $1$.}

	In this case, $\mathbb{R}^d$ will be separated into two zones, the points in zone $\zonea$ is closer to $\cV_1$ and points in zone $\zoneb$ is closer to $\cV_2$~\revised{(let's say each zone is open).} 
 Define $V^*_1(v):=\arg\min_{w\in \cV_1}\normm{v-w}$ and $V^*_2(v)=\arg\min_{w\in \cV_2}\normm{v-w}$, then we know $V^*(v)=V^*_1(v)$ for $v\in\zonea$ and $V^*(v)=V^*_2(v)$ for $v\in\zoneb$. 

We analyze first the SDE~\eqref{eq:n1} and its related Fokker--Planck equation~\eqref{eq:n3}. The existence of a solution to \eqref{eq:n1} and \eqref{eq:n3} is shown in Theorem~\ref{prop:2222}, the proof is similar to the simple case and is also based on a limiting process. The main convergence result of this section is presented in Theorem~\ref{prop:66}.
 
	\subsection{Existence of solutions for the general case}
	
 Similar to \eqref{prop:11}, we have the following proposition.
	\begin{proposition}\label{prop:333}Under Assumption \ref{asp:22}, 
		the following SDE
		\begin{equation}\label{eq:92929292}
			dV^{\kappa,R}_t=-\lambda(V^{\kappa,R}_t-v_{\alpha}(\rho^{\kappa,R}_t,\cP(V^{\kappa,R}_t))dt+\sigma\left(\normm{V^{\kappa,R}_t-v_{\alpha}(\rho^{\kappa,R}_t,\cP(V^{\kappa,R}_t)}+\kappa\right)dB_t,
		\end{equation}
		where $ R\in (0,\infty),\kappa\geq 0$, $\rho_t^{\kappa,R}$ is the distribution of $V_t^{\kappa,R}$ and $\rho_0^{\kappa,R}=\rho_0\in\mathcal{P}_4(\mathbb{R}^d)$, has a strong solution and $\rho_t^{\kappa,R}$ satisfies the related Fokker--Planck equation in the weak sense.
	\end{proposition}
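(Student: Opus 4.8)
The plan is to follow the proof of Proposition~\ref{prop:11} essentially line by line: the additive constant $\kappa$ in the diffusion coefficient and the fact that $\cV$ is now a finite union of convex bodies (Assumption~\ref{asp:44} rather than~\ref{asp:33}) change nothing in the existence mechanism, which is again an application of the Leray--Schauder fixed point theorem in the spirit of \cite[Theorem~3.1]{carrillo2018analytical}. Concretely, I would fix $R\in(0,\infty)$, $\kappa\geq0$, $T^*>0$, and introduce
\[
\cT_R^{\kappa}\colon\ \mathcal{C}\bigl([0,T^*]\times\overline{B_R(0)},\mathbb{R}^d\bigr)\longrightarrow\mathcal{C}\bigl([0,T^*]\times\overline{B_R(0)},\mathbb{R}^d\bigr),\qquad u\longmapsto v_{\alpha}\bigl(\rho_{\cdot}^{\kappa,R,u},\cdot\bigr),
\]
where $\rho_t^{\kappa,R,u}$ is the law of the strong solution of the linear SDE obtained from \eqref{eq:92929292} by freezing the consensus point at $u\bigl(t,\cP(\cdot)\bigr)$. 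This auxiliary SDE is classically well posed (see, e.g., \cite[Chapter~6]{arnold1974stochastic}): replacing $\normm{\,\cdot\,}$ by $\normm{\,\cdot\,}+\kappa$ leaves the diffusion coefficient globally Lipschitz in the state variable and of at most linear growth, exactly as in the case $\kappa=0$.

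I would then verify that $\cT_R^{\kappa}$ is a compact mapping, repeating the three ingredients of Proposition~\ref{prop:11}: the polynomial moment bounds $\Ep{\normm{V_t^{\kappa,R,u}}^p}\leq\Cfive(1+\Ep{\normm{V_0}^p})$ for every $p\geq1$ (see \cite[Chapter~7]{arnold1974stochastic}); the It\^o-isometry estimate $\Ep{\normmsq{V_t^{\kappa,R,u}-V_s^{\kappa,R,u}}}\leq\Csix\normm{t-s}$, where $\Csix$ merely picks up a harmless extra $\sigma^2\kappa^2$ contribution; and \cite[Lemma~3.2]{carrillo2018analytical}, which turns this into $\tfrac12$-H\"older continuity in time of $t\mapsto v_{\alpha}(\rho_t^{\kappa,R,u},\cP(v))$. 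Combined with the Lipschitz dependence in $v$ furnished by Lemma~\ref{lem:2}, the image of $\cT_R^{\kappa}$ lies in $\mathcal{C}^{0,1/2}([0,T^*]\times\overline{B_R(0)},\mathbb{R}^d)$, and compactness follows from the compact embedding $\mathcal{C}^{0,1/2}([0,T^*]\times\overline{B_R(0)},\mathbb{R}^d)\hookrightarrow\mathcal{C}([0,T^*]\times\overline{B_R(0)},\mathbb{R}^d)$.

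Next I would show that $\Gamma:=\{u:u=r\,\cT_R^{\kappa}u,\ r\in[0,1]\}$ is bounded in $\mathcal{C}$. The only point where convexity of $\cV$ was used in Proposition~\ref{prop:11} was Lemma~\ref{lem:111}, and it can be bypassed by an elementary estimate valid for any compact $\cV$: since $f(w)\geq\underbar f$ for all $w$, since $\Cone+\underbar f\geq0$, and since every nearest point $V^*(v)\in\cV$ of $v$ satisfies $f(V^*(v))=\underbar f$, one gets
\[
A(w,v)-A(V^*(v),v)\ \geq\ \normmsq{v-w}-\dist{v,\cV}^2\qquad\text{for all }w,v\in\mathbb{R}^d;
\]
using $0\in\cV$ (Remark~\ref{rem:0in}), so that $\dist{v,\cV}\leq\normm{v}\leq R$ on $B_R(0)$, this gives $A(w,v)-A(V^*(v),v)\geq\tfrac1{10}\normmsq{w}$ once $\normm{w}$ exceeds a constant depending only on $R$ and $\diam(\cV)$, while the matching upper bound $A(w,v)-A(V^*(v),v)\leq\Ceight(R,L,\varkappa)(1+\normmsq{w})$ is immediate from Assumption~\ref{asp:22}. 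With these two bounds, \cite[Lemma~3.3]{carrillo2018analytical} yields $\normmsq{v_{\alpha}(\rho_t^{\kappa,R},\cP(v))}\leq\Cnine\Ep{\normmsq{V_t^{\kappa,R}}}+\Cten$, and the third step of the proof of \cite[Theorem~3.2]{carrillo2018analytical} closes the boundedness of $\Gamma$. The Leray--Schauder theorem then produces a fixed point of $\cT_R^{\kappa}$, hence a strong solution of \eqref{eq:92929292}, and It\^o's formula applied to this solution shows that $\rho_t^{\kappa,R}$ solves the associated Fokker--Planck equation in the weak sense. (For fixed $R$ one also gets uniqueness by the standard argument of the fourth step of \cite[Theorem~3.1]{carrillo2018analytical}, but only existence is recorded, uniqueness being lost in the limit $R\to\infty$, cf.\ Remark~\ref{rem:uniq}.)

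I do not expect a genuine obstacle here; the work is essentially bookkeeping. The one thing to keep in mind is that under Assumption~\ref{asp:44} the nearest-point map $v\mapsto V^*(v)$ is only defined off the measure-zero interface between the zones $\zonea$ and $\zoneb$, but this is irrelevant for existence: the consensus point $v_{\alpha}(\rho,\cdot)$ is built from the strictly positive weight $e^{-\alpha A(w,v)}$ and involves no selection of a minimizer, so every estimate above makes literal sense pointwise in $v$. The genuinely new difficulties of the multi-component case — the zone decomposition, the symmetry of $\nabla V^*$ on each component, and the role of $\kappa>0$ in allowing trajectories to leave $\partial\zonea$ — enter only in the large-time analysis of Theorem~\ref{prop:66}, not in this existence statement.
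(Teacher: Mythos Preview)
Your proposal is correct and follows the same approach as the paper: the paper's own proof is literally the one-line statement ``The proof is the same as the proof of Proposition~\ref{prop:11}, just notice the additional $\kappa$,'' and you have reproduced precisely that argument in detail. Your observation that the appeal to Lemma~\ref{lem:111} in the proof of Proposition~\ref{prop:11} can be replaced by the elementary lower bound $A(w,v)-A(V^*(v),v)\geq\normmsq{v-w}-\dist{v,\cV}^2$ (which needs only compactness of $\cV$, not convexity) is a useful clarification, since both Proposition~\ref{prop:11} and Proposition~\ref{prop:333} are stated under Assumption~\ref{asp:22} alone.
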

	\begin{proof}
		The proof is the same as the proof of Proposition \ref{prop:11}, just notice the additional $\kappa$.
	\end{proof}
	\begin{lemma}\label{lem:6666} Under Assumption \ref{asp:22} and Assumption \ref{asp:44}, when $R\in (0,\infty),\kappa> 0,\rho_0^{\kappa,R}=\rho_0\in \mathcal{C}^2_b(\mathbb{R}^d)$, we have 
		\begin{equation}
			\begin{aligned}
				\frac{d}{dt}\int\normmsq{v-V^*(v)}d\rho^{\kappa,R}_t(v)
				&\leq-(\lambda-2\sigma^2d)\int\normmsq{v-V^*(v)}d\rho_t^{\kappa,R}(v)\\
				&\quad+(\lambda+2\sigma^2d)\int\left(\normm{V^*(v)-v_{\alpha}(\rho_t^{\kappa,R},\cP(v))}+\kappa\right)^2d\rho_t^{\kappa,R}(v).
			\end{aligned}
		\end{equation}
	\end{lemma}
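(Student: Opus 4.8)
The plan is to mirror the proof of Lemma~\ref{lem:66}, applying It\^o's formula (equivalently, testing the Fokker--Planck equation associated with \eqref{eq:92929292}) to the function $v\mapsto\normmsq{v-V^*(v)}=\dist{v,\cV}^2$ along the solution $V_t^{\kappa,R}$ of \eqref{eq:92929292}, and then collecting the resulting contributions. On the open zone $\zonea$ the map $V^*$ coincides with $V^*_1$, and on $\zoneb$ with $V^*_2$; since $\zonea\cup\zoneb$ has full Lebesgue measure, the a.e.\ identities used in Lemma~\ref{lem:66} are available with the relevant component $\cV_i$ in place of $\cV$: $\dist{\cdot,\cV_i}^2$ is of class $\mathcal{C}^{1,1}$, $\nabla V^*$ is symmetric with $0\preccurlyeq\nabla V^*\preccurlyeq I$ (Remark~\ref{rmk:10} applied to the convex component), and $(v-V^*(v))\nabla V^*(v)=0$ as in \eqref{eq:17} --- the line segment used there stays in the same open zone for small times since each $\cV_i$ is convex. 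In particular $\nabla\normmsq{v-V^*(v)}=2(v-V^*(v))$ and $\Delta\normmsq{v-V^*(v)}=2\operatorname{tr}(I-\nabla V^*(v))\le 2d$ for a.e.\ $v$.

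The genuinely new point is the interface $\partial\zonea=\partial\zoneb$ (points equidistant from $\cV_1$ and $\cV_2$), across which $V^*$ jumps and $\dist{\cdot,\cV}^2$ is not $\mathcal{C}^2$. I would treat it through the representation $\dist{v,\cV}^2=\min\{\dist{v,\cV_1}^2,\dist{v,\cV_2}^2\}$: each $\dist{\cdot,\cV_i}^2$ is \emph{convex} with distributional Hessian $\preccurlyeq 2I$, so $\dist{\cdot,\cV}^2-\normmsq{\cdot}$ is a minimum of concave functions, hence concave; thus $\dist{\cdot,\cV}^2$ is semiconcave, its distributional Hessian consisting of an absolutely continuous part $\preccurlyeq 2I$ a.e.\ plus a nonpositive singular part supported on the interface. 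Concretely I would mollify, $g_\delta:=\dist{\cdot,\cV}^2*\eta_\delta\in\mathcal{C}^\infty$ (so that $g_\delta\to\dist{\cdot,\cV}^2$ locally uniformly, $\nabla g_\delta\to 2(v-V^*(v))$ a.e., and $\nabla^2 g_\delta\preccurlyeq 2I$ on all of $\mathbb{R}^d$), apply the classical It\^o formula to $g_\delta(V_t^{\kappa,R})$, take expectations --- the It\^o integral is a martingale because \eqref{eq:92929292} has at most linearly growing coefficients, hence finite moments of every order (cf.\ Proposition~\ref{prop:333}) --- and then let $\delta\to0$. Since $\dist{\cdot,\cV}^2$ grows at most quadratically and, $\kappa>0$ making \eqref{eq:92929292} uniformly parabolic, $\rho_t^{\kappa,R}$ admits a (continuous) density, the a.e.\ limits of $\nabla g_\delta$ and $\Delta g_\delta$ may be passed under $\rho_t^{\kappa,R}$ by dominated convergence, while the nonpositive singular contribution of $\Delta\dist{\cdot,\cV}^2$ only improves the inequality and is discarded.

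It then remains to do the bookkeeping, exactly as in Lemma~\ref{lem:66}. Using $\nabla\normmsq{v-V^*(v)}=2(v-V^*(v))$, the drift of \eqref{eq:92929292} contributes
\begin{align*}
-2\lambda\inner{v-V^*(v)}{v-v_\alpha(\rho_t^{\kappa,R},\cP(v))}
&=-2\lambda\normmsq{v-V^*(v)}\\
&\quad-2\lambda\inner{v-V^*(v)}{V^*(v)-v_\alpha(\rho_t^{\kappa,R},\cP(v))},
\end{align*}
whose second term is bounded in absolute value, by Cauchy--Schwarz, by $\lambda\normmsq{v-V^*(v)}+\lambda\big(\normm{V^*(v)-v_\alpha(\rho_t^{\kappa,R},\cP(v))}+\kappa\big)^2$; the diffusion, with coefficient $\sigma\big(\normm{V_t^{\kappa,R}-v_\alpha(\rho_t^{\kappa,R},\cP(V_t^{\kappa,R}))}+\kappa\big)$, contributes $\tfrac{\sigma^2}{2}\Delta\normmsq{v-V^*(v)}\big(\normm{v-v_\alpha(\rho_t^{\kappa,R},\cP(v))}+\kappa\big)^2\le\sigma^2 d\big(\normm{v-v_\alpha(\rho_t^{\kappa,R},\cP(v))}+\kappa\big)^2$, which by the triangle inequality $\normm{v-v_\alpha(\rho_t^{\kappa,R},\cP(v))}\le\normm{v-V^*(v)}+\normm{V^*(v)-v_\alpha(\rho_t^{\kappa,R},\cP(v))}$ is at most $2\sigma^2 d\normmsq{v-V^*(v)}+2\sigma^2 d\big(\normm{V^*(v)-v_\alpha(\rho_t^{\kappa,R},\cP(v))}+\kappa\big)^2$. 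Integrating against $\rho_t^{\kappa,R}$ and summing the three contributions yields
\begin{align*}
\frac{d}{dt}\int\normmsq{v-V^*(v)}\,d\rho_t^{\kappa,R}
&\le-(\lambda-2\sigma^2d)\int\normmsq{v-V^*(v)}\,d\rho_t^{\kappa,R}\\
&\quad+(\lambda+2\sigma^2d)\int\big(\normm{V^*(v)-v_{\alpha}(\rho_t^{\kappa,R},\cP(v))}+\kappa\big)^2\,d\rho_t^{\kappa,R},
\end{align*}
which is the claim.

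The main obstacle is the rigorous justification of the generalized It\^o formula across the non-smooth interface: this is precisely where the convexity of the components is used (a minimum of convex functions is semiconcave, so the ``corner'' opens downward and the singular It\^o correction has the favorable sign), and where $\kappa>0$ enters, both to keep the driving diffusion non-degenerate and to ensure the absolute continuity of $\rho_t^{\kappa,R}$. Everything else --- the symmetry and spectral bounds for $\nabla V^*$, and the Cauchy--Schwarz and triangle-inequality bookkeeping --- is a routine adaptation of Lemma~\ref{lem:66}.
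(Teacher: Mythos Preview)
Your proposal is correct and reaches the same inequality, but the route differs from the paper's in an interesting way.

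The paper regularizes the \emph{equation}: it replaces the drift and diffusion coefficients in the Fokker--Planck equation by smoothed, bounded versions $\chi_{i,t}$, so that parabolic regularity theory (Bogachev et al.) yields a $\mathcal{C}^{2,1}$ density $\rho_t^{\kappa,R,i}$. With a classical density in hand, it splits the integral over $\zonea$ and $\zoneb$, applies Green's identity on each piece, and tracks the boundary terms on $\partial\zonea$ explicitly. The drift boundary terms cancel because $\normmsq{v-V_1^*(v)}=\normmsq{v-V_2^*(v)}$ there and $n_1=-n_2$; for the diffusion, one boundary term survives, namely $-\sigma^2\int_{\partial\zonea}\inner{V_2^*(v)-V_1^*(v)}{n_1}(\cdots)\,dS$, and the paper shows $\inner{V_2^*(v)-V_1^*(v)}{n_1}\ge 0$ by observing that $n_1$ is the gradient direction of $K(v)=\normmsq{v-V_1^*(v)}-\normmsq{v-V_2^*(v)}$. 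Finally it passes $i\to\infty$ via $W_2$-convergence of the regularized densities.

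You instead regularize the \emph{test function}: mollify $\dist{\cdot,\cV}^2$ and exploit that $\dist{\cdot,\cV}^2-\normmsq{\cdot}$ is a minimum of concave functions (each $\dist{\cdot,\cV_i}^2$ being convex with Hessian $\preccurlyeq 2I$), hence concave. This gives $\nabla^2 g_\delta\preccurlyeq 2I$ globally, so $\Delta g_\delta\le 2d$ uniformly in $\delta$, which is all you need for the diffusion bound; the singular part of the distributional Laplacian on $\partial\zonea$ is then manifestly nonpositive and simply discarded. The favorable sign that the paper extracts by a direct geometric computation is, in your argument, a consequence of semiconcavity---the ``kink'' of a minimum opens downward. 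Your approach avoids both the PDE regularization and the subsequent $W_2$-limit, at the price of invoking a generalized It\^o formula (or rather, the uniform bound $\Delta g_\delta\le 2d$ together with dominated convergence for the drift term and absolute continuity of $\rho_t^{\kappa,R}$ from $\kappa>0$). Both proofs hinge on the same structural fact; yours packages it more conceptually, the paper's more explicitly.
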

\begin{proof}

    In this case, $V^*(v)$ is smooth inside $\zonea,\zoneb$, but { may not be  smooth enough on the whole space} $\mathbb{R}^d$. For this reason, in the course of the proof  we would need to discuss { \begin{equation}
\int_{\zonea}\normmsq{v-V^*(v)} \operatorname{diff}(\rho^{\kappa,R}_t)(v) dv, \text { and } \int_{\zoneb}\normmsq{v-V^*(v)}\operatorname{diff}(\rho^{\kappa,R}_t)(v) dv,        
    \end{equation} respectively, for suitable differential operators $\operatorname{diff}$. In particular, we would need $\rho_t^{\kappa,R}$ to be  $\mathcal{C}^2$ in the space variable, and thus the possibility of applying the Green's identity. Unfortunately, here we can not immediately ensure $\rho_t^{\kappa,R}\in\mathcal{C}^2$ in the space variable, so instead of approaching directly the calculation of the two terms, we consider first a regularization and then a limiting process.} Therefore, we first study the following regularized Fokker--Planck equation:
    \newcommand{\na}{\chi}
    \begin{alignat}{2}
{\partial_t}\rho_t^{\kappa,R,i}&=\lambda\operatorname{div}\left(\na_{i,t}(v)\rho_t^{\kappa,R,i}\right)+\frac{\sigma^2}{2}\Delta\left(({\normm{\na_{i,t}(v)}+\kappa})^2\rho_t^{\kappa,R,i}\right)\label{eq:923},\\
\rho_0^{\kappa,R,i}&=\rho_0\in \mathcal{C}^2_b(\mathbb{R}^d),
    \end{alignat}
    here
\begin{equation}
        \na_{i,t}(v)=\Phi_{i}(v-\varphi_{2^{-i}}* v_{\alpha}(\rho^{\kappa,R}_t,\cP(v))),
    \end{equation}
    where $*$ \revised{denotes} the convolution, $\varphi_\epsilon$ is the typical mollifier and $\Phi_i:\mathbb{R}^d\to\mathbb{R}^d$ is a smooth map with $\Phi_i(v)\in B_{2^i}(0), $ for all $ v\in\mathbb{R}^d$, $\Phi_i(v)=v,$ for all $ v\in B_{2^{i-1}}(0)$ and has bounded $\mathcal{C}^3$ norm and $\|\nabla \Phi_i\|_{\mathrm{op}}\leq 1$~(it is not hard to construct such $\Phi_i$, here we omit the construction of $\Phi_i$), thus we have $\lim_{i\to\infty}\na_{i,t}(v)=v-v_{\alpha}(\rho_t^{\kappa,R},\cP(v))$, for all  $v\in\mathbb{R}^d,t\in[0,T^*]$. Since now $\na_{i,t}$ has bounded $\mathcal{C}^3$ norm and $\na_{i,t}(v)$ is H\"older continuous in terms of $t$ due to the $1/2$-H\"older continuity of $v_{\alpha}(\rho_t^{\kappa,R},\cP(v))$ with respect to $t$~(by a similar analysis as in \eqref{eq:h29}),  by an application of  \cite[Theorem 6.6.1]{bogachev2022fokker}, we have $\rho_t^{\kappa,R,i}\in\mathcal{C}^{2,1}(\mathbb{R}^d\times [0,T^*])$~(which \revised{denotes} it has continuous derivatives up to second order in the space variable and continuous first order derivative in the time variable).

    In the rest of \revised{the} proof, for simplicity, we will omit $\kappa,R$,
unless otherwise stated. By the Fokker--Planck equation~\eqref{eq:923}, since now $\rho_t^{i}\in\mathcal{C}^{2,1}(\mathbb{R}^d\times [0,T^*])$, we have
		\begin{equation}\label{eq:9494}
			\begin{aligned}
				&\frac{d}{dt}\int\normmsq{v-V^*(v)}d\rho^i_t(v)\\
				&=\underbrace{\lambda\int\normmsq{v-V^*(v)}\operatorname{div}(\na_{i,t}(v)\rho^i_t(v))dv}_{\text{drift}}+\underbrace{\frac{\sigma^2}{2}\int\normmsq{v-V^*(v)}\Delta((\normm{\na_{i,t}(v)}+\kappa)^2\rho^i_t(v))dv}_{\text{diffusion}}.
			\end{aligned}
		\end{equation}
\textbf{Drift Term:}
		In $\zonea$, we have $V^*(v)=V^*_1(v)$ and
		\begin{equation}\label{eq:9595}
			\begin{aligned}
				&\lambda\int_{\zonea}\normmsq{v-V^*_1(v)}\operatorname{div}(\na_{i,t}(v)\rho_t^i(v))dv\\
				&=-\lambda\int_{\zonea}\inner{\nabla \normmsq{v-V_1^*(v)}}{\na_{i,t}(v)}d\rho_t^i(v)\\
				&\quad +\int_{\partial \zonea}\inner{\normmsq{v-V^*_1(v)}\na_{i,t}(v)\rho_t^i(v)}{n_1}dS,
			\end{aligned}
		\end{equation}
		where $n_1$ is the outer unit normal vector of $\partial \zonea$ at \revised{a} point $v$. Similarly, we have on $\zoneb$, $V^*(v)=V^*_2(v)$ and 
		\begin{equation}\label{eq:9696}
			\begin{aligned}
				&\lambda\int_{\zoneb}\normmsq{v-V^*_2(v)}\operatorname{div}(\na_{i,t}(v)\rho_t^i(v))dv\\
				&=-\lambda\int_{\zoneb}\inner{\nabla \normmsq{v-V_2^*(v)}}{\na_{i,t}(v)}d\rho_t^i(v)\\
				&\quad +\int_{\partial \zoneb}\inner{\normmsq{v-V^*_2(v)}\na_{i,t}(v)\rho_t^i(v)}{n_2}dS,
			\end{aligned}
		\end{equation}
		where $n_2$ is the outer unit normal vector of $\partial \zoneb$ at \revised{a} point $v$, we have $n_1(v)=-n_2(v)$ for \revised{a} point $v$ on $\partial \zonea~(=\partial \zoneb)$.
		Combining \eqref{eq:9595} and \eqref{eq:9696}, we have
		\begin{equation}\label{eq:288}
			\begin{aligned}
				&\lambda\int_{\mathbb{R}^d}\normmsq{v-V^*(v)}\operatorname{div}(\na_{i,t}(v)\rho_t^i(v))dv\\
				&=\lambda\left[\int_{\zonea}\normmsq{v-V^*_1(v)}\operatorname{div}(\na_{i,t}(v)\rho_t^i(v))dv+\int_{\zoneb}\normmsq{v-V^*_2(v)}\operatorname{div}(\na_{i,t}(v)\rho_t^i(v))dv\right]\\
				&=-\lambda\left[\int_{\zonea}\inner{\nabla \normmsq{v-V_1^*(v)}}{\na_{i,t}(v)}d\rho_t^i(v)+\int_{\zoneb}\inner{\nabla \normmsq{v-V_2^*(v)}}{\na_{i,t}(v)}d\rho_t^i(v)\right]\\
				&=-\lambda\int_{\mathbb{R}^d}\inner{\nabla \normmsq{v-V^*(v)}}{\na_{i,t}(v)}d\rho_t^i(v),
			\end{aligned}
		\end{equation}
		the two boundary terms cancel, since on $\partial \zonea$, we have $\normmsq{v-V_1^*(v)}=\normmsq{v-V_2^*(v)}$ by the definition of $\zonea$ and $\zoneb$, and $n_1=-n_2$.  For the last line in \eqref{eq:288}, we can deal with it just like the case where $\cV$ is  convex, compact and has smooth boundary~(we can ignore the points $v$ on the $\partial \zonea$, since $\partial \zonea$ has dimension less than $d$ and so $\rho_t^i(\partial \zonea)=0$ since $\rho_t^i$ is absolutely continuous).

  \textbf{Diffusion Term:}
		In $\zonea$, we have $V^*(v)=V^*_1(v)$  and
		\begin{equation}\label{eq:9797}
			\begin{aligned}
				&\frac{\sigma^2}{2}\int_{\zonea}\normmsq{v-V^*_1(v)}\Delta((\normm{\na_{i,t}(v)}+\kappa)^2\rho_t^i(v))dv\\
				&=\frac{\sigma^2}{2}\int_{\zonea}\Delta(\normmsq{v-V^*_1(v)})(\normm{\na_{i,t}(v)}+\kappa)^2d\rho_t^i(v)\\
				&\quad+\frac{\sigma^2}{2}\int_{\partial \zonea}\normmsq{v-V^*_1(v)}\frac{\partial((\normm{\na_{i,t}(v)}+\kappa)^2\rho_t^i(v))}{\partial n_1}dS\\
				&\quad -\frac{\sigma^2}{2}\int_{\partial \zonea}\frac{\partial \normmsq{v-V_1^*(v)}}{\partial n_1}(\normm{\na_{i,t}(v)}+\kappa)^2\rho_t^i(v)dS,
			\end{aligned}
		\end{equation}
		in the above we used \revised{Green's} identity and $n_1$ is the unit outer normal vector of 
		$\partial \zonea$ at a point $v$; similarly, on $\zoneb$, we have $V^*(v)=V^*_2(v)$ and 
		\begin{equation}\label{eq:9898}
			\begin{aligned}
				&\frac{\sigma^2}{2}\int_{\zoneb}\normmsq{v-V^*_2(v)}\Delta((\normm{\na_{i,t}(v)}+\kappa)^2\rho_t^i(v))dv\\
				&=\frac{\sigma^2}{2}\int_{\zoneb}\Delta(\normmsq{v-V^*_2(v)})(\normm{\na_{i,t}(v)}+\kappa)^2d\rho_t^i(v)\\
				&\quad+\frac{\sigma^2}{2}\int_{\partial \zoneb}\normmsq{v-V^*_2(v)}\frac{\partial((\normm{\na_{i,t}(v)}+\kappa)^2\rho_t^i(v))}{\partial n_2}dS\\
				&\quad -\frac{\sigma^2}{2}\int_{\partial \zoneb}\frac{\partial \normmsq{v-V_2^*(v)}}{\partial n_2}(\normm{\na_{i,t}(v)}+\kappa)^2\rho_t^i(v)dS,
			\end{aligned}
		\end{equation}
		where $n_2=-n_1$ is the unit outer normal vector of $\partial \zoneb$ at a point $v$. Combining \eqref{eq:9797} and \eqref{eq:9898}, we have 
		\begin{equation}\label{eq:311}
			\begin{aligned}
				&\frac{\sigma^2}{2}\int_{\mathbb{R}^d}\normmsq{v-V^*(v)}\Delta((\normm{\na_{i,t}(v)}+\kappa)^2\rho_t^i(v))dv\\
				&=\frac{\sigma^2}{2}\left[ \int_{\zonea}\Delta(\normmsq{v-V^*_1(v)})(\normm{\na_{i,t}(v)}+\kappa)^2d\rho_t^i(v)\right.\\
				&\quad\left.+\int_{\zoneb}\Delta(\normmsq{v-V^*_2(v)})(\normm{\na_{i,t}(v)}+\kappa)^2d\rho_t^i(v)\right]\\
				&\quad+\frac{\sigma^2}{2}\left[\int_{\partial \zonea}\normmsq{v-V^*_1(v)}\frac{\partial((\normm{\na_{i,t}(v)}+\kappa)^2\rho_t^i(v))}{\partial n_1}dS\right.\\
				&\quad\left.+\int_{\partial \zoneb}\normmsq{v-V^*_2(v)}\frac{\partial((\normm{\na_{i,t}(v)}+\kappa)^2\rho_t^i(v))}{\partial n_2}dS\right]\\
				&\quad-\frac{\sigma^2}{2}\left[\int_{\partial \zonea}\frac{\partial \normmsq{v-V_1^*(v)}}{\partial n_1}(\normm{\na_{i,t}(v)}+\kappa)^2\rho_t^i(v)dS\right.\\
				&\quad\left.-\int_{\partial \zonea}\frac{\partial \normmsq{v-V_2^*(v)}}{\partial n_1}(\normm{\na_{i,t}(v)}+\kappa)^2\rho_t^i(v)dS\right],
			\end{aligned}
		\end{equation}
		\revised{the first term in the right hand side of \eqref{eq:311} becomes $\frac{\sigma^2}{2}\int_{\mathbb{R}^d}\Delta\normmsq{v-V^*(v)}(\normm{\na_{i,t}(v)}+\kappa)^2d\rho_t^i(v)$; the second term vanishes due to $\partial\zonea=\partial \zoneb, n_1=-n_2$, $\normm{v-V^*_1(v)}=\normmsq{v-V^*_2(v)}$ for any $v\in \partial \zonea$; in the third term of the right hand side of \eqref{eq:311}, the partial derivatives can be directly computed. So combining all three terms, we have}
		\begin{equation}
		\begin{aligned}
			&\frac{\sigma^2}{2}\int_{\mathbb{R}^d}\normmsq{v-V^*(v)}\Delta((\normm{\na_{i,t}(v)}+\kappa)^2\rho_t^i(v))dv\\
			&=\frac{\sigma^2}{2}\int_{\mathbb{R}^d}\Delta\normmsq{v-V^*(v)}(\normm{\na_{i,t}(v)}+\kappa)^2d\rho_t^i(v)\\
		&\quad-{\sigma^2}\left[\int_{\partial \zonea}\inner{(I-\nabla V^*_1(v)(v-V^*_1(v))^{\top})}{n_1}(\normm{\na_{i,t}(v)}+\kappa)^2\rho_t^i(v)dS\right.\\
		&\quad\left.-\int_{\partial\zonea}\inner{(I-\nabla V^*_2(v)(v-V^*_2(v))^{\top})}{n_1}(\normm{\na_{i,t}(v)}+\kappa)^2\rho_t^i(v)dS\right]\\
		&=\frac{\sigma^2}{2}\int_{\mathbb{R}^d}\Delta\normmsq{v-V^*(v)}(\normm{\na_{i,t}(v)}+\kappa)^2d\rho_t^i(v)\\
		&-\sigma^2\int_{\partial \zonea}\inner{V_2^*(v)-V^*_1(v)}{n_1}(\normm{\na_{i,t}(v)}+\kappa)^2\rho_t^i(v)dS\\
		&\leq \frac{\sigma^2}{2}\int_{\mathbb{R}^d}\Delta\normmsq{v-V^*(v)}(\normm{\na_{i,t}(v)}+\kappa)^2d\rho_t^i(v),
		\end{aligned}
		\end{equation}
  where we used $\nabla V^*_i(v)(v-V^*_i(v))=0$ for $v\in \mathbb{R}^d,i=1,2$~(see \eqref{eq:17}). The last inequality can be obtained as follows: we know $\normmsq{v-V^*_1(v)}-\normmsq{v-V^*_2(v)}=0$ for any $v\in \partial\zonea$, so we know $\partial\zonea$ is a contour of \revised{the}
		function $K(v):=\normmsq{v-V^*_1(v)}-\normmsq{v-V^*_2(v)}$ and $K(v+\epsilon n_1)>0$  for sufficiently small $\epsilon>0$, so we know
		$n_1$ is the gradient direction of the function $K(\cdot)$ at a point $v$, that is $\nabla K(v)=a n_1$ for some \revised{non-negative} $a$, so we have
		\begin{equation}
			\nabla K(v)=2(I-\nabla V^*_1(v))(v-V^*_1(v))-2(I-\nabla V^*_2(v))(v-V^*_2(v))=2(V^*_2(v)-V^*_1(v))= an_1,
		\end{equation}
		and so $\inner{V^*_2(v)-V^*_1(v)}{n_1}\geq 0$.
  
  By combining now the estimates for the diffusion and drift terms, we have 
  \begin{equation}
    \begin{aligned}
        &\frac{d}{dt}\int\normmsq{v-V^*(v)}d\rho_t^{i}(v)\\
        &\leq-\lambda\int \inner{\nabla \normmsq{v-V^*(v)}}{\na_{i,t}(v)}d\rho_t^{i}(v)+\frac{\sigma^2}{2}\int\Delta\normmsq{v-V^*(v)}(\normm{\na_{i,t}(v)}+\kappa)^2d\rho_t^{i}(v),
    \end{aligned}
\end{equation}
in the integral form, we have for any $t_1\leq t_2 \in [0,T^*]$, that
\begin{equation}\label{eq:103103}
    \begin{aligned}
        &\int\normmsq{v-V^*(v)}d\rho_{t_2}^{i}(v)-\int\normmsq{v-V^*(v)}d\rho^i_{t_1}(v)\\
        &\leq -\lambda\int_{t_1}^{t_2}\int \inner{\nabla \normmsq{v-V^*(v)}}{\na_{i,s}(v)}d\rho_s^{i}(v)ds\\
        &\quad+\frac{\sigma^2}{2}\int_{t_1}^{t_2}\int\Delta\normmsq{v-V^*(v)}(\normm{\na_{i,s}(v)}+\kappa)^2d\rho_s^{i}(v)ds.
    \end{aligned}
\end{equation}
Next, we will show that 
\begin{equation*}
    \normmsq{v-V^*(v)}-\normmsq{w-V^*(w)}\leq \Cnineteen(1+\normm{w}+\normm{v})\normm{v-w},
\end{equation*}
for some constant $\Cnineteen$ only depending on $\cV$: when $v,w$ are both in $\zonea$ or $\zoneb$, we have 
$\normmsq{V^*(v)-V^*(w)}\leq \normmsq{v-w}$ \revised{due to the convexity of $\zonea,\zoneb$,} then 
\begin{equation}\label{eq:tmpp1}
    \begin{aligned}
        \normmsq{v-V^*(v)}-\normmsq{w-V^*(w)}&=\inner{v-V^*(v)-w+V^*(w)}{v-V^*(v)+w-V^*(w)}\\
        &\leq \normm{v-V^*(v)+w-V^*(w)}\normm{v-V^*(v)-w+V^*(w)}\\
        &\leq \Ceightteen(1+\normm{w}+\normm{v})(\normm{v-w}+\normm{V^*(v)-V^*(w)})\\
        &\leq 2\Ceightteen(1+\normm{w}+\normm{v})\normm{v-w},
    \end{aligned}
\end{equation}
where $\Ceightteen$ only depends on $\cV$;
when $v\in\zonea$ and $w\in\zoneb$~(the other case $v\in\zoneb$ and $w\in\zonea$ can be considered similarly), assume $u(v,w)\in\partial\zonea$~(for simplicity, we will denote it as $u$ in the next) and satisfies $\normm{v-w}=\normm{v-u}+\normm{u-w}$, then, \revised{with slight abuse of notation in the use of $V^*(u)$}, we have
\begin{equation}
    \begin{aligned}
    \normmsq{v-V^*(v)}-\normmsq{w-V^*(w)}&=\normmsq{v-V^*(v)}-\normmsq{u-V^*(u)}+\normmsq{u-V^*(u)}-\normm{w-V^*(w)}\\
    &\leq 2\Ceightteen(1+\normm{v}+\normm{u})\normm{v-u}+ 2\Ceightteen(1+\normm{w}+\normm{u})\normm{w-u}\\
    &\leq4\Ceightteen(1+\normm{v}+\normm{w})(\normm{v-u}+\normm{u-w})\\
    &\leq 4\Ceightteen(1+\normm{v}+\normm{w})\normm{v-w},
    \end{aligned}
\end{equation}
the first inequality is due to \eqref{eq:tmpp1} and the second inequality is due to $\normm{u}\leq \max\{\normm{v},\normm{w}\}$; and so combine the above analsyis together, we have, for any $v,w\in\mathbb{R}^d$, that
\begin{equation}\label{eq:lala}
    \normmsq{v-V^*(v)}-\normmsq{w-V^*(w)}\leq \Cnineteen(1+\normm{w}+\normm{v})\normm{v-w},
\end{equation}
where $\Cnineteen$ depends on $\cV$.

Following the proof of Lemma~\ref{lem:444}, we can show that $\lim_{i\to\infty}W_2^2(\rho_s^i,\rho_s^{\kappa,R})=0$, for all $s\in [0,T^*]$. Let $\pi^i_s$ be the optimal { transport} coupling between $\rho_s^i$ and $\rho_s^{\kappa,R}$, then we have 
\begin{equation}\label{eq:104104}
    \begin{aligned}
       &\Big|\int\normmsq{v-V^*(v)}d\rho_s^{i}(v)-\int\normmsq{w-V^*(w)}d\rho_s^{\kappa,R}(w)\Big|\\
       &\leq \Big|\iint\left[\normmsq{v-V^*(v)}-\normmsq{w-V^*(w)}\right]d\pi^i_s(v,w)\Big|\\
      &\leq \Big|\iint \Cnineteen(1+\normm{w}+\normm{v})\normm{v-w}d\pi^i_s(v,w)\Big|\\
      &\leq \Cnineteen\left(1+\sqrt{\int\normmsq{w}d\rho_s^{\kappa,R}}+\sqrt{\int\normmsq{v}d\rho_s^i(v)}\right)W_2(\rho^i_s,\rho_s^{\kappa,R})\\
       &\leq \Ctwenty W_2(\rho^i_s,\rho_s^{\kappa,R})\to 0\quad \text{as $i\to\infty$},
    \end{aligned}
\end{equation}
in the above, the constant $\Ctwenty$ only depends on the second momentum of $\rho_s^i, \rho_s^{\kappa,R}$ and $\cV$ which are all bounded. So we have shown the left hand side of $\eqref{eq:103103}$ converges to \begin{equation}
    \int\normmsq{v-V^*(v)}d\rho_{t_2}^{\kappa,R}(v)-\int\normmsq{v-V^*(v)}d\rho^{\kappa,R}_{t_1}(v).
\end{equation} By the explicit formulas for $\nabla \normmsq{v-V^*(v)}$ and $\Delta\normmsq{v-V^*(v)}$~(see the proof of Lemma~\ref{lem:66}), the fact that $\lim_{i\to\infty}\normm{\na_{i,s}(v)}=\normm{v-v_{\alpha}(\rho_s^{\kappa,R},\cP(v))}$, the $1$-Lipschitz continuity of $\Phi_i$, and a similar analysis as in \eqref{eq:104104}, we can also show, for any $s\in [0,T^*]$, that
\begin{equation}
    \begin{aligned}
        &\lim_{i\to\infty}\int \inner{\nabla \normmsq{v-V^*(v)}}{\na_{i,s}(v)}d\rho_s^{i}(v)\\
        &=\int \inner{\nabla \normmsq{v-V^*(v)}}{v-v_{\alpha}(\rho_s^{\kappa,R},\cP(v))}d\rho_s^{\kappa,R}(v)
    \end{aligned}
\end{equation}
and
\begin{equation}
    \begin{aligned}
        &\lim_{i\to\infty}\int\Delta\normmsq{v-V^*(v)}(\normm{\na_{i,s}(v)}+\kappa)^2d\rho_s^{i}(v)\\
        &=\int\Delta\normmsq{v-V^*(v)}(\normm{v-v_{\alpha}(\rho_s^{\kappa,R},\cP(v))}+\kappa)^2d\rho_s^{\kappa,R}(v)
    \end{aligned}
\end{equation}
and  both terms are uniformly bounded for $s\in[0,T^*]$. Thus by \revised{the} dominated convergence theorem, we have that the right hand side of \eqref{eq:103103} converges to 
\begin{equation}
    \begin{aligned}
        &-\lambda\int_{t_1}^{t_2}\int \inner{\nabla \normmsq{v-V^*(v)}}{v-v_{\alpha}(\rho_s^{\kappa,R},\cP(v))}d\rho_s^{\kappa,R}(v)ds\\
        &\quad+\frac{\sigma^2}{2}\int_{t_1}^{t_2}\int\Delta\normmsq{v-V^*(v)}(\normm{v-v_{\alpha}(\rho_s^{\kappa,R},\cP(v))}+\kappa)^2d\rho_s^{\kappa,R}(v)ds.
    \end{aligned}
\end{equation}
So combine the above analysis and we have, for any $t_1\leq t_2 \in [0,T^*]$, that
\begin{equation}
    \begin{aligned}
        &\int\normmsq{v-V^*(v)}d\rho_{t_2}^{\kappa,R}(v)-\int\normmsq{v-V^*(v)}d\rho^{\kappa,R}_{t_1}(v)\\
        &\leq -\lambda\int_{t_1}^{t_2}\int \inner{\nabla \normmsq{v-V^*(v)}}{v-v_{\alpha}(\rho_s^{\kappa,R},\cP(v))}d\rho_s^{\kappa,R}(v)ds\\
        &\quad+\frac{\sigma^2}{2}\int_{t_1}^{t_2}\int\Delta\normmsq{v-V^*(v)}(\normm{v-v_{\alpha}(\rho_s^{\kappa,R},\cP(v))}+\kappa)^2d\rho_s^{\kappa,R}(v)ds,
    \end{aligned}
\end{equation}
which is equivalent to 
 \begin{equation}
    \begin{aligned}
        &\frac{d}{dt}\int\normmsq{v-V^*(v)}d\rho_t^{\kappa,R}(v)\\
        &\leq-\lambda\int \inner{\nabla \normmsq{v-V^*(v)}}{v-v_{\alpha}(\rho^{\kappa,R}_t,\cP(v))}d\rho_t^{\kappa,R}(v)\\
        &\quad+\frac{\sigma^2}{2}\int\Delta\normmsq{v-V^*(v)}(\normm{v-v_{\alpha}(\rho^{\kappa,R}_t,\cP(v))}+\kappa)^2d\rho_t^{\kappa,R}(v).
    \end{aligned}
\end{equation}
In the following, similar to the proof of Lemma \ref{lem:66}, we finally have
\begin{equation}
    \begin{aligned}
        &\frac{d}{dt}\int\normmsq{v-V^*(v)}d\rho_t^{\kappa,R}(v)\\
         &\leq-\lambda\int \inner{\nabla \normmsq{v-V^*(v)}}{v-v_{\alpha}(\rho_t^{\kappa,R},\cP(v))}d\rho_t^{\kappa,R}(v)\\
         &\quad+\frac{\sigma^2}{2}\int\Delta\normmsq{v-V^*(v)}(\|v-v_{\alpha}(\rho_t^{\kappa,R},\cP(v))\|+\kappa)^2d\rho_t^{\kappa,R}(v)\\
        &=-2\lambda\int (v-V^*(v))(I-\nabla V^*(v))(v-V^*(v))^{\top}d\rho_t^{\kappa,R}(v)\\
        &\quad -2\lambda\int (V^*(v)-v_{\alpha}(\rho_t^{\kappa,R},\cP(v)))(I-\nabla V^*(v)(v-V^*(v))d\rho_t^{\kappa,R}(v)\\
        &\quad +\sigma^2\int \operatorname{tr}(I-\nabla V^*(v))(\|v-v_{\alpha}(\rho_t^{\kappa,R},\cP(v))\|+\kappa)^2d\rho_t^{\kappa,R}(v)\\
        &\leq -2\lambda\int\normmsq{v-V^*(v)}d\rho_t^{\kappa,R}(v)\\
        &\quad+2\lambda\sqrt{\int\normmsq{v-V^*(v)}d\rho_t^{\kappa,R}(v)}\sqrt{\int \normmsq{V^*(v)-v_{\alpha}(\rho_t^{\kappa,R},\cP(v))}d\rho_t^{\kappa,R}(v)}\\
        &\quad+2\sigma^2d\int\normmsq{v-V^*(v)}d\rho_t^{\kappa,R}(v)+2\sigma^2d\int \left (\normm{V^*(v)-v_{\alpha}(\rho_t^{\kappa,R},\cP(v))}+\kappa \right )^2d\rho_t^{\kappa,R}(v)\\
        &\leq-(\lambda-2\sigma^2d)\int\normmsq{v-V^*(v)}d\rho_t^{\kappa,R}(v)\\
        &\quad+(\lambda+2\sigma^2d)\int \left(\normm{V^*(v)-v_{\alpha}(\rho_t^{\kappa,R},\cP(v))}+\kappa \right )^2d\rho_t^{\kappa,R}(v).
    \end{aligned}
\end{equation} 
\end{proof}

\revised{\begin{remark}When ${\mm}>2$, the integration should be considered in each zone $\mathcal{Z}_i$, the boundary is separated into $\mathcal{Z}_i\cap\mathcal{Z}_j,i\not=j$,  thus the surface integration should be considered in each $\mathcal{Z}_i\cap\mathcal{Z}_j$~($=\mathcal{Z}_j\cap\mathcal{Z}_i$) and $n_i(v)$ for a point $v$ on $\mathcal{Z}_i\cap\mathcal{Z}_j$ equals $-n_j(v)$ for the same point on $\mathcal{Z}_j\cap\mathcal{Z}_i$. For example, \eqref{eq:9595} should be 
		\begin{equation}
		\begin{aligned}
		&\lambda\int_{\mathcal{Z}_1}\normmsq{v-V^*_1(v)}\operatorname{div}(\chi_{i,t}(v)\rho_t^i(v))dv\\
		&=-\lambda\int_{\zonea}\inner{\nabla \normmsq{v-V_1^*(v)}}{\chi_{i,t}(v)}d\rho_t^i(v)\\
		&\quad +\sum_{j=2}^{{\mm}}\int_{\mathcal{Z}_1\cap\mathcal{Z}_j}\inner{\normmsq{v-V^*_1(v)}\chi_{i,t}(v)\rho_t^i(v)}{n_1}dS.
		\end{aligned}
		\end{equation}
 
\end{remark}}

In the following, we will analyze the properties of \revised{the} function $A(w,v):= \frac{1}{\varkappa}f(w)(f(v)+\Cone)+\normmsq{v-w}$, here $\Cone\geq\sml \operatorname{diam}(\cV)^p-\underbar{f}$. 

\revised{\begin{lemma}
	\label{lem:999}	
	Under Assumption \ref{asp:11} and Assumption \ref{asp:44}, let $\Cone\geq \sml\operatorname{diam}(\cV)^p-\underbar{f}$, then we have 
	\begin{equation}\label{eq:8877}
		A(w,v)-A(V_1^*(v),v)\geq \normmsq{w-V_1^*(v)}-2(\frac{2^p\varkappa}{\sml^2})^{\frac{1}{p-1}},\quad \text{for all } w\in \zonea,
	\end{equation}
	and 
	\begin{equation}
		A(w,v)-A(V_2^*(v),v)\geq \normmsq{w-V_2^*(v)}-2(\frac{2^p\varkappa}{\sml^2})^{\frac{1}{p-1}},\quad \text{for all } w\in \zoneb.
	\end{equation}
\end{lemma}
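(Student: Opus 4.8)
The plan is to transcribe the proof of Lemma~\ref{lem:111} to the single convex piece $\cV_1$, supplying three ingredients that were trivial or absent in the connected case: (i) for $w\in\zonea$ one has $\dist{w,\cV}=\dist{w,\cV_1}$, so Assumption~\ref{asp:11} still gives $f(w)-\underbar{f}\ge\sml\dist{w,\cV_1}^p$; (ii) the variational characterization of the projection onto the convex set $\cV_1$ still furnishes the one-sided bound on $\dist{w,\cV_1}$ used in \eqref{ieq:29}; and (iii) a lower bound on $f(v)+\Cone$ in terms of $\normm{v-V_1^*(v)}$, which is no longer immediate since $v$ may be closer to $\cV_2$ than to $\cV_1$. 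I prove the first inequality only; the second follows verbatim after exchanging $\cV_1\leftrightarrow\cV_2$ and $\zonea\leftrightarrow\zoneb$, and the argument is identical for each component when $\mm>2$.

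First I would fix $v$, set $a:=V_1^*(v)$ and $t:=\normm{v-a}=\dist{v,\cV_1}$, choose an orthonormal basis $\{e_i\}$ with $v-a=te_d$, and write $w=a+u$ with $u=\sum_iu_ie_i$. Since $a$ is the projection of $v$ onto the convex $\cV_1$, $\inner{v-a}{x-a}\le0$ for all $x\in\cV_1$, hence $\inner{x-a}{e_d}\le0$ on $\cV_1$ and therefore $\dist{a+u,\cV_1}\ge u_d$ whenever $u_d\ge0$. Using this together with $\dist{a+u,\cV}=\dist{a+u,\cV_1}$ (as $w\in\zonea$), $f(a)=\underbar{f}$ (as $a\in\cV$), and Assumption~\ref{asp:11}, the expansion of Lemma~\ref{lem:111} gives, for $u_d\ge0$,
\begin{equation*}
A(a+u,v)-A(a,v)=\tfrac1\varkappa\big(f(a+u)-\underbar{f}\big)\big(f(v)+\Cone\big)+\normmsq{u}-2t u_d\ \ge\ \tfrac{\sml}{\varkappa}u_d^{\,p}\big(f(v)+\Cone\big)+\normmsq{u}-2t u_d ,
\end{equation*}
whereas for $u_d<0$ the first term is nonnegative (using $\underbar{f}+\Cone\ge\sml\operatorname{diam}(\cV)^p\ge0$, so also $f(v)+\Cone\ge0$) and $-2tu_d\ge0$, giving directly $A(a+u,v)-A(a,v)\ge\normmsq{u}$. (If $t=0$ then $v\in\cV_1$ and one simply drops the first, nonnegative, term.)

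The crux is step (iii): I claim $f(v)+\Cone\ge 2^{1-p}\sml\,t^p$ for every $v$. Choosing $y\in\cV$ with $\normm{v-y}=\dist{v,\cV}$ and any $y_1\in\cV_1$, the bound $\normm{y-y_1}\le\operatorname{diam}(\cV)$ yields $\dist{v,\cV_1}\le\dist{v,\cV}+\operatorname{diam}(\cV)$, hence $\dist{v,\cV}\ge\max\{0,\,t-\operatorname{diam}(\cV)\}$. Combining with $f(v)-\underbar{f}\ge\sml\dist{v,\cV}^p$, with $\underbar{f}+\Cone\ge\sml\operatorname{diam}(\cV)^p$, and with the elementary inequality $a^p+b^p\ge 2^{1-p}(a+b)^p$ ($a,b\ge0$, $p\ge1$) applied to $a=\max\{0,t-\operatorname{diam}(\cV)\}$, $b=\operatorname{diam}(\cV)$ (so $a+b\ge t$) proves the claim. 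Substituting, for $u_d\ge0$,
\begin{equation*}
A(a+u,v)-A(a,v)\ \ge\ \normmsq{u}+\frac{\sml^2}{2^{p-1}\varkappa}(u_dt)^p-2\,u_dt ,
\end{equation*}
and, writing $s:=u_dt\ge0$, the same two-case estimate as in Lemma~\ref{lem:111} shows $\tfrac{\sml^2}{2^{p-1}\varkappa}s^p-2s\ge -2(2^p\varkappa/\sml^2)^{1/(p-1)}$ (it is nonnegative once $s\ge(2^p\varkappa/\sml^2)^{1/(p-1)}$, and bounded below by $-2s$ otherwise). Combining the two cases for $u_d$ yields $A(w,v)-A(V_1^*(v),v)\ge\normmsq{w-V_1^*(v)}-2(2^p\varkappa/\sml^2)^{1/(p-1)}$.

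The one genuinely new obstacle relative to Lemma~\ref{lem:111} is exactly the estimate $f(v)+\Cone\ge2^{1-p}\sml\,t^p$: in the connected case $\dist{v,\cV}=\normm{v-V^*(v)}$ for all $v$, while here $\normm{v-V_1^*(v)}$ can exceed $\dist{v,\cV}$ by as much as $\operatorname{diam}(\cV)$ when $v$ sits near a different component, and it is precisely the strengthened hypothesis $\Cone\ge\sml\operatorname{diam}(\cV)^p-\underbar{f}$ (entering through $\dist{v,\cV_1}\le\dist{v,\cV}+\operatorname{diam}(\cV)$) that closes this gap. Everything else is a line-by-line reprise of the convex case, now invoking convexity of the single piece $\cV_1$ in place of $\cV$.
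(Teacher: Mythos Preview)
Your proof is correct and follows essentially the same route as the paper: project onto the convex piece $\cV_1$, use the variational characterisation to get $\dist{a+u,\cV_1}\ge u_d$, and bridge the gap between $\dist{v,\cV}$ and $\dist{v,\cV_1}$ via the elementary inequality $a^p+b^p\ge 2^{1-p}(a+b)^p$ together with the hypothesis $\Cone+\underbar{f}\ge\sml\operatorname{diam}(\cV)^p$. The only organisational difference is that the paper assumes without loss of generality $v\in\zonea$ and then treats the two target zones separately, whereas you keep $v$ arbitrary and handle the first inequality uniformly through $\dist{v,\cV_1}\le\dist{v,\cV}+\operatorname{diam}(\cV)$; the mathematical content is identical.
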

}
\begin{proof}
\revised{The proof is similar to the proof of Lemma \ref{lem:2}. We will assume $v\in\zonea$ and so $V^*(v)=V^*_1(v)$.
	For simplicity, we will denote $a:=V_2^*(v)$.
	Let $\left\{e_i\right\}_{i=1}^d$ be some basis of $\mathbb{R}^d$, such that  $v-a=v-V_2^*(v)=te_d$ for some $t\geq 0$ and we denote $w=\sum_{i=1}^dw_ie_i$ for any $w\in\mathbb{R}^d$ such that $w+a\in\zoneb$, we know $\inner{v-a}{w}=(v_d-a_d)w_d$.
	When $w_d\geq 0$, we have
	\begin{equation}
		\begin{aligned}
			&A(a+w,v)-A(a,v)\\
			&=\frac{1}{\varkappa}(f(a+w)-f(a))(f(v)+\Cone)+\normmsq{a+w-v}-\normmsq{a-v}\\
			&\geq\frac{\sml}{\varkappa}\dist{a+w,\cV_2}^p({\sml}\normm{v-V_1^*(v)}^p+\Cone+\underbar{f})+\normmsq{w}-2|a_d-v_d|w_d\\
			&\geq \frac{\sml}{\varkappa}w_d^p({\sml}2^{1-p}\normm{v-a}^p-{\sml}\normm{{V_2^*(v)}-V_1^*(v)}^p+\Cone+\underbar{f})+\normmsq{w}-2|a_d-v_d|w_d\\
			&\geq \normmsq{w}+\frac{\sml^2}{\varkappa}2^{1-p}\normm{v-a}^pw_d^p-2|a_d-v_d|w_d\\
			&\geq \normmsq{w}-2(\frac{2^p\varkappa}{\sml^2})^{\frac{1}{p-1}},
		\end{aligned}
	\end{equation}
 in the above, the second inequality is due to $|c-b|^p+b^p\geq 2^{1-p}c^p$ for $c,b>0$, the third inequality is due to $\Cone\geq {\sml}\operatorname{diam}(\cV)^p-\underbar{f}$ and so $\Cone+\underbar{f}-{\sml}\normm{{V_1^*(v)}-V_2^*(v)}^p\geq 0$, the derivation of the fourth inequality is the same as the one in \eqref{ieq:29}.} When $w_d<0$, since we already have $f(a+w)-f(a)\geq 0$, we obtain
	\begin{equation}
		\begin{aligned}
			A(a+w,v)-A(a,v)&\geq  \normmsq{a+w-v}-\normmsq{a-v}\\
			&=\normmsq{w}-2|a_d-v_d|w_d\\
			&\geq \normmsq{w}.
		\end{aligned}
	\end{equation}
	So, for any $w\in \zoneb$, we have
	\begin{equation}
		A(w,v)-A(V_2^*(v),v)\geq \normmsq{w-a}-2(\frac{2^p\varkappa}{\sml^2})^{\frac{1}{p-1}}.
	\end{equation}
	
	Similarly, denote $a':=V_1^*(v)$.
	Let $\left\{e'_i\right\}_{i=1}^d$ be some basis of $\mathbb{R}^d$, such that $v-a'=v-V_1^*(v)=t'e'_d$ for some $t'\geq 0$ and we denote $w'=\sum_{i=1}^dw'_ie'_i$ for any $w\in\mathbb{R}^d$ such that $w'+a'\in\zonea$, we know $\inner{v-a'}{w}=(v_d-a'_d)w_d$.
	When $w'_d\geq 0$, we have
	\begin{equation}
		\begin{aligned}
			&A(a'+w',v)-A(a',v)\\
			&=\frac{1}{\varkappa}(f(a'+w')-f(a'))(f(v)+\Cone)+\normmsq{a'+w'-v}-\normmsq{a'-v}\\
			&\geq \frac{\sml}{\varkappa}\dist{a'+w',\cV_1}^p({\sml}\normm{v-V_1^*(v)}^p+\Cone+\underbar{f})+\normmsq{w'}-2|a'_d-v_d|w'_d\\
			&\geq \frac{\sml}{\varkappa}{w'_d}^p({\sml}\normm{v-a'}^p+\Cone+\underbar{f})+\normmsq{w'}-2|a'_d-v_d|w'_d\\
			&\geq \normmsq{w'}+\frac{\sml^2}{\varkappa}\normm{v-a'}^p{w'_d}^p-2|a'_d-v_d|w'_d\\
			&\geq \normmsq{w'}-2(\frac{2\varkappa}{\sml^2})^{\frac{1}{p-1}}\\
   &\geq \normmsq{w'}-2(\frac{2^p\varkappa}{\sml^2})^{\frac{1}{p-1}}
		\end{aligned}
	\end{equation}
	When $w'_d<0$, we have
	\begin{equation}
		\begin{aligned}
			A(a'+w',v)-A(a',v)&\geq  \normmsq{a'+w'-v}-\normmsq{a'-v}\\
			&=\normmsq{w'}-2|a'_d-v_d|w'_d\\
			&\geq \normmsq{w'}.
		\end{aligned}
	\end{equation}
	So, for any $w\in \zonea$,we have
	\begin{equation}
		A(w,v)-A(V_1^*(v),v)\geq \normmsq{w-a}-2(\frac{2^p\varkappa}{\sml^2})^{\frac{1}{p-1}}.
	\end{equation}
	
	When $v\in\zoneb$, the analysis is the same, so all in all we finished the proof.

\end{proof}
In the following, we will adopt the notations of constants from Section \ref{sec:LTA1}.
With the above lemma and the fact that
\begin{alignat}{2}
A_{1,r}&:=\sup_{w\in B_r(V^*_1(v))}A(w,v)-A(V^*_1(v),v)\leq \Celeven(1+\normmsq{v-V^*(v)})r,\quad r\in (0,1),\label{eq:1185}\\
A_{2,r}&:=\sup_{w\in B_r(V^*_2(v))}A(w,v)-A(V^*_2(v),v)\leq \Celeven(1+\normmsq{v-V^*(v)})r,\quad r\in (0,1),\label{eq:1186}
\end{alignat}
where $\Celeven=\Celeven(L,\varkappa,\operatorname{diam}(\cV))$ by Assumption~\ref{asp:22}, then using Lemma~\ref{lem:9090}, we have
\begin{equation}\label{ieq:198}
	\begin{aligned}
		&\normmsq{v_{\alpha}(\rho_t^{\kappa,R},\cP(v))-V^*(v)}\\&\leq \left(2q^{1/2}+2\Celeven r^{1/2}(1+\normm{v-V^*(v)})+\mathrm{diam}(\cV)E^{\kappa,R}_{t,r,p,q,\sml}(\alpha,\varkappa)\right.\\
  &\quad\left.+E^{\kappa,R}_{t,r,p,q,\sml}(\alpha,\varkappa)\int\normm{v-V^*(v)}d\rho_t^{\kappa,R}+\mathrm{diam}(\cV)\right)^2\\
  &\leq  \Ctwelve\left[r\normmsq{v-V^*(v)}+r+(E^{\kappa,R}_{t,r,p,q,\sml}(\alpha,\varkappa))^2\int \normmsq{v-V^*(v)}d\rho_t^{\kappa,R}(v)\right.\\
  &\quad\left.+(E^{\kappa,R}_{t,r,p,q,\sml}(\alpha,\varkappa))^2+1\right]+8q,
\end{aligned}
\end{equation}
where $E^{\kappa,R}_{t,r,p,q,\sml}(\alpha,\varkappa):=\frac{e^{-\alpha (q-2(2^p\varkappa/\sml^2)^{1/(p-1)})}}{\inf_{w\in\cV}\int_{B_r(w)}\phi_r^{\tau}(u-w)\rho_t^{\kappa,R}(u)}$ and $\Ctwelve$ only depends $L,\operatorname{diam}(\cV)$ and $\varkappa$.
So with Lemma \ref{lem:6666}, we have 
\begin{equation}\label{ieq:144}
	\begin{aligned}
		&\frac{d}{dt}\int\normmsq{v-V^*(v)}d\rho_t^{\kappa,R}(v)\\
		&\leq-(\lambda-2\sigma^2d)\int\normmsq{v-V^*(v)}d\rho_t^{\kappa,R}(v)+(\lambda+2\sigma^2d)\int(\normm{V^*(v)-v_{\alpha}(\rho_t,v)}+\kappa)^2d\rho^{\kappa,R}_t(v)\\
		&\leq -(\lambda-2\sigma^2d)\int\normmsq{v-V^*(v)}d\rho_t^{\kappa,R}(v)+(\lambda+2\sigma^2d)\Ctwelve\left[r\int\normmsq{v-V^*(v)}d\rho_t^{\kappa,R}+r\right.\\
		&\quad\left.+(E^{\kappa,R}_{t,r,p,q,\sml}(\alpha,\varkappa))^2\int \normmsq{v-V^*(v)}d\rho_t^{\kappa,R}(v)+(E^{\kappa,R}_{t,r,p,q,\sml}(\alpha,\varkappa))^2+1+\kappa^2\right]+8(\lambda+\sigma^2d)q\\
		&\leq -\left[(\lambda-2\sigma^2d)-(\lambda+2\sigma^2d)\Ctwelve r\right]\int\normmsq{v-V^*(v)}d\rho_t^{\kappa,R}(v)+(\lambda+2\sigma^2d)\Ctwelve\left[r\right.\\
		&\quad\left.+(E^{\kappa,R}_{t,r,p,q,\sml}(\alpha,\varkappa))^2\int \normmsq{v-V^*(v)}d\rho_t^{\kappa,R}(v)+(E^{\kappa,R}_{t,r,p,q,\sml}(\alpha,\varkappa))^2+1+\kappa^2\right] +8(\lambda+\sigma^2d)q\\
		&\leq -\Cexp\mathcal{V}^{\kappa,R}_t+\tilde{H}^{\kappa,R}_{t,r,p,q,\sml}(\alpha,\varkappa,\mathcal{V}^{\kappa,R}_t),
	\end{aligned}
\end{equation}
where $\Cexp:=(\lambda-2\sigma^2d)-(\lambda+2\sigma^2d)\Ctwelve r$~(as in Section~\ref{sec:LTA1}, we need to choose $r,\lambda,\sigma$ such that $\Cexp>0$), $\mathcal{V}^{\kappa,R}_t:=\int\normmsq{v-V^*(v)}d\rho_t^{\kappa,R}(v)$ and 
\begin{equation}
\begin{aligned}
\tilde{H}^{\kappa,R}_{t,r,p,q,\sml}(\alpha,\varkappa,\mathcal{V}^{\kappa,R}_t)&:=(\lambda+2\sigma^2d)\Ctwelve\Big[r+(E^{\kappa,R}_{t,r,p,q,\sml}(\alpha,\varkappa))^2\int \normmsq{v-V^*(v)}d\rho_t^{\kappa,R}(v)\\
&\quad+(E^{\kappa,R}_{t,r,p,q,\sml}(\alpha,\varkappa))^2+1+\kappa^2\Big]+8(\lambda+2\sigma^2d)q.
\end{aligned}
\end{equation}

With the above preparation, following the proofs of Lemma \ref{lem:44}, Lemma \ref{lem:3} and Theorem \ref{prop:22},  we can show  the following results.

\begin{lemma}\label{lem:10}	
	Under Assumption \ref{asp:11}, Assumption \ref{asp:22} and Assumption \ref{asp:44}, for any $T^*$, by choosing $\varkappa$ small enough depending on $\lambda,\sigma,d,\sml$ and $\alpha$ \revised{large enough depending on $\rho_0,\lambda,\sigma,d,L,\varkappa,T^*,$ $\operatorname{diam}(\cV),$ but independent of $R$},
	we have \begin{equation}
		\normmsq{V^*(v)-v_{\alpha}(\rho_t^{\kappa,R},\cP(v))}\leq \Cctr\normmsq{v-V^*(v)}+\Csmall,\quad\forall t\in [0,T^*]
	\end{equation}
	when $R\in (0,\infty),\kappa>0$, where $\Cctr,\Csmall$ depend on $\kappa,\operatorname{diam}(\cV)$ but are independent of $R$.
\end{lemma}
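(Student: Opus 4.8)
The plan is to transplant the continuation (bootstrap) argument of Lemma~\ref{lem:44} to the two–zone setting. The key point is that the two genuinely new ingredients needed — a differential inequality for $\mathcal{V}^{\kappa,R}_t:=\int\normmsq{v-V^*(v)}d\rho_t^{\kappa,R}(v)$ that survives the loss of smoothness of $V^*$ across $\partial\zonea$, and a quantitative Laplace estimate for $v_\alpha(\rho_t^{\kappa,R},\cP(v))$ adapted to $\cV=\cV_1\cup\cV_2$ — are already in hand: the former is \eqref{ieq:144}, obtained from Lemma~\ref{lem:6666} via the regularization $\rho_t^{\kappa,R,i}$, and the latter is \eqref{ieq:198}, obtained from the geometric Lemma~\ref{lem:999} together with the general-case quantitative Laplace principle Lemma~\ref{lem:9090}. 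So I would fix $T^*>0$, set $q:=3(2^p\varkappa/\sml^2)^{1/(p-1)}$ (so that the exponent $q-2(2^p\varkappa/\sml^2)^{1/(p-1)}$ appearing in $E^{\kappa,R}_{t,r,p,q,\sml}$ is positive), and pick $r$ small, depending on $\lambda,\sigma,d,L,\varkappa,\operatorname{diam}(\cV)$, so that $\Cexp=(\lambda-2\sigma^2d)-(\lambda+2\sigma^2d)\Ctwelve r>0$.

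Next comes the bootstrap, identical in structure to Lemma~\ref{lem:44}. Put $\cc:=C_0T^*+\mathcal{V}^{\kappa,R}_0$ for a constant $C_0=C_0(\kappa,\operatorname{diam}(\cV),\lambda,\sigma,d)$ to be fixed, and $T^o:=\sup\{t\ge0:\mathcal{V}^{\kappa,R}_{t'}\le\cc\ \forall t'\in[0,t]\}$. On $[0,T^*\wedge T^o]$ one has, exactly as in \eqref{eq:2626} and using $0\in\cV$, Lemma~\ref{lem:999} (which yields $A(w,v)-A(V^*(v),v)\ge\tfrac1{10}\normmsq{w}$ for $\normm{w}$ large together with an at most quadratic upper bound from Assumption~\ref{asp:22}), a bound $\normmsq{v_\alpha(\rho_t^{\kappa,R},\cP(v))}\le\Cnine\mathcal{V}^{\kappa,R}_t+\Cten$ with $\Cnine,\Cten$ depending on $\operatorname{diam}(\cV)+r,L,\varkappa$ but \emph{not} on $R$; hence $B:=\sup_{v\in\cV}\sup_{t\in[0,T^*\wedge T^o],\,w\in B_r(v)}\normm{v_\alpha(\rho_t^{\kappa,R},w)-v}$ is bounded uniformly in $R$, so Lemma~\ref{lem:5l} (applicable for $\kappa>0$, in fact with even better lower bounds, cf. Remark~\ref{rmk:7}) gives $\inf_{w\in\cV}\int_{B_r(w)}\phi_r^\tau(u-w)\rho_t^{\kappa,R}(u)\ge\Cfourteen>0$, with $\Cfourteen$ depending only on $\rho_0,r,\lambda,\sigma,d,T^*,\operatorname{diam}(\cV),L,\varkappa$. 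Consequently $E^{\kappa,R}_{t,r,p,q,\sml}(\alpha,\varkappa)\le\Cfourteen^{-1}e^{-\alpha(2^p\varkappa/\sml^2)^{1/(p-1)}}\to0$ as $\alpha\to\infty$, so choosing $\varkappa$ small (depending on $\lambda,\sigma,d,\sml$) and $\alpha$ large (depending on $\rho_0,r,q,\varkappa,\sigma,\lambda,d,T^*,\operatorname{diam}(\cV),L$, all $R$-free) makes $\tilde H^{\kappa,R}_{t,r,p,q,\sml}(\alpha,\varkappa,\mathcal{V}^{\kappa,R}_t)\le C_0$ on $[0,T^*\wedge T^o]$, with $C_0$ essentially $(\lambda+2\sigma^2d)\Ctwelve(1+\kappa^2)$ up to negligible terms. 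The usual contradiction — if $T^o<T^*$ then $\mathcal{V}^{\kappa,R}_t-\mathcal{V}^{\kappa,R}_0\le C_0t$, forcing $\cc=C_0T^*+\mathcal{V}^{\kappa,R}_0\le C_0T^o+\mathcal{V}^{\kappa,R}_0$, i.e. $T^*\le T^o$ — then gives $T^o\ge T^*$.

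Finally, with the bound on $E^{\kappa,R}_{t,r,p,q,\sml}$ now valid on all of $[0,T^*]$, I would reinsert it into \eqref{ieq:198} to obtain $\normmsq{V^*(v)-v_\alpha(\rho_t^{\kappa,R},\cP(v))}\le\Cctr\normmsq{v-V^*(v)}+\Csmall$ with $\Cctr=\Ctwelve r$ as small as desired and $\Csmall$ of order $\Ctwelve(1+\kappa^2)+q+\operatorname{diam}(\cV)^2$, hence depending on $\kappa$ and $\operatorname{diam}(\cV)$ but independent of $R$; a parallel adaptation of Lemma~\ref{lem:3} then supplies the $R$-uniform fourth moment bound, as needed downstream. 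I expect the main obstacle to be not any single estimate but keeping every constant uniform in the projection radius $R$ throughout the bootstrap — in particular verifying that the moment bound behind \eqref{eq:2626} and the lower bound $\Cfourteen$ from Lemma~\ref{lem:5l} never see $R$ — and, at the structural level, the fact that, unlike in the one-component case of Lemma~\ref{lem:44}, the Gibbs weight $e^{-\alpha A(\cdot,v)}$ charges all of $\cV=\cV_1\cup\cV_2$; this is exactly what produces the $\operatorname{diam}(\cV)$ term in \eqref{ieq:198} and is the reason $\Csmall$ cannot be made arbitrarily small here, a point that has to be revisited — exploiting $\kappa>0$ to keep the two zones apart — in the proof of the final convergence result Theorem~\ref{prop:66}.
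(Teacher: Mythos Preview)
Your proposal is correct and matches the paper's approach: the paper itself simply states that Lemma~\ref{lem:10} follows ``following the proofs of Lemma~\ref{lem:44}'' using the preparation already in place (Lemma~\ref{lem:6666}, Lemma~\ref{lem:999}, and inequalities~\eqref{ieq:198}--\eqref{ieq:144}), which is precisely the bootstrap you describe. Your closing remark that $\Csmall$ inherits an unavoidable $\operatorname{diam}(\cV)$-contribution here, to be refined later via the zone decomposition in Lemma~\ref{lem:12}, is also exactly right.
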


\begin{lemma}\label{lem:11} Under Assumption \ref{asp:11}, Assumption \ref{asp:22} and Assumption \ref{asp:44}, for any $T^*$, by choosing $\varkappa$ small enough depending on $\lambda,\sigma,d,\sml$ and $\alpha$ \revised{large enough depending on $\lambda,\sigma,d,L,\varkappa,T^*,$ $\operatorname{diam}(\cV),$ but independent of $R$}, we have
	\begin{equation}
		\Ep{|{\bold{V}^{\kappa,R}_t}|^4}\leq \Cd,\quad t\in [0,T^*],
	\end{equation}when $R\in (0,\infty),\kappa>0$, where $\bold{V}^{\kappa,R}_t:=\sup_{s\in[0,t]}\normm{V^{\kappa,R}_s}$ and $\Cd$ only depends on $T^*,\lambda,\sigma,d,\kappa,$
 $\operatorname{diam}(\cV),\rho_0.$
\end{lemma}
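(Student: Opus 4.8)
The plan is to run the argument of Lemma~\ref{lem:3} essentially verbatim, the only new features being the extra additive constant $\kappa$ in the diffusion coefficient of \eqref{eq:92929292} and the piecewise definition of $V^*$ on the zones $\zonea,\zoneb$; neither affects the moment estimate. First I would invoke Lemma~\ref{lem:10}: for $\varkappa$ small and $\alpha$ large (both independent of $R$) one has, for all $t\in[0,T^*]$ and all $v$,
\begin{equation*}
\normmsq{v_{\alpha}(\rho_t^{\kappa,R},\cP(v))-V^*(v)}\leq \Cctr\normmsq{v-V^*(v)}+\Csmall ,
\end{equation*}
with $\Cctr,\Csmall$ independent of $R$. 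Since $\cV=\cup_i\cV_i$ is compact, $\normm{V^*(v)}\leq\sup_{w\in\cV}\normm{w}=:D_0<\infty$ for every $v$, so by the triangle inequality
\begin{equation*}
\normm{v-v_{\alpha}(\rho_t^{\kappa,R},\cP(v))}+\kappa\leq \Ceightteen\bigl(1+\normm{v}\bigr),\qquad t\in[0,T^*],
\end{equation*}
where $\Ceightteen$ depends only on $\Cctr,\Csmall,D_0,\kappa$, hence only on $\lambda,\sigma,d,L,\varkappa,T^*,\operatorname{diam}(\cV),\kappa$, and in particular not on $R$. Thus the drift $-\lambda(v-v_{\alpha}(\rho_t^{\kappa,R},\cP(v)))$ and the diffusion coefficient $\sigma(\normm{v-v_{\alpha}(\rho_t^{\kappa,R},\cP(v))}+\kappa)$ of \eqref{eq:92929292} grow at most linearly in $v$, uniformly in $R$ and $t\in[0,T^*]$.

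Next I would write $V^{\kappa,R}_h$ in integral form, raise to the fourth power, apply $(a+b+c)^4\leq27(a^4+b^4+c^4)$, pass to the supremum over $h\in[0,t]$ and take expectations, exactly as in the displayed chain in the proof of Lemma~\ref{lem:3}. For the drift integral I would use Jensen/Hölder, $\sup_{h\le t}\normm{\int_0^h(\cdot)\,ds}^4\leq t^3\int_0^t\normm{\cdot}^4\,ds$, and for the stochastic integral the Burkholder--Davis--Gundy inequality, $\Ep{\sup_{h\le t}\normm{\int_0^h(\cdot)\,dB_s}^4}\leq C_{\mathrm d}\,\Ep{\bigl(\int_0^t\normmsq{\cdot}\,ds\bigr)^2}\leq C_{\mathrm d}\,t\int_0^t\Ep{\normm{\cdot}^4}\,ds$. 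Plugging in the affine bound from the previous paragraph and $\Ep{|\bold{V}^{\kappa,R}_0|^4}=\int\normm{v}^4\,d\rho_0(v)<\infty$ yields
\begin{equation*}
\Ep{|\bold{V}^{\kappa,R}_t|^4}\leq \Cfifteen\Bigl(1+\int_0^t\Ep{|\bold{V}^{\kappa,R}_s|^4}\,ds\Bigr),\qquad t\in[0,T^*],
\end{equation*}
with $\Cfifteen$ depending only on $\lambda,\sigma,d,\rho_0,T^*,\operatorname{diam}(\cV),\kappa$. Grönwall's inequality then gives $\Ep{|\bold{V}^{\kappa,R}_t|^4}\leq\Cd$ with $\Cd$ of the stated dependence.

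A minor technical caveat is that Grönwall requires the a priori finiteness of $t\mapsto\Ep{|\bold{V}^{\kappa,R}_t|^4}$ on $[0,T^*]$; for fixed $R$ this is provided by the moment bounds already used in Proposition~\ref{prop:333} (cf.\ \cite[Chapter~7]{arnold1974stochastic}), or one may instead run the whole computation with the stopping times $\tau_M:=\inf\{t:\normm{V^{\kappa,R}_t}\geq M\}$, derive the integral inequality for $\Ep{|\bold{V}^{\kappa,R}_{t\wedge\tau_M}|^4}$ with constants independent of $M$, apply Grönwall, and let $M\to\infty$ by monotone convergence. I expect the only genuinely delicate point is the bookkeeping that keeps all constants independent of $R$ — but this is entirely inherited from the uniform-in-$R$ affine bound of Lemma~\ref{lem:10}, so there is no real obstacle beyond what is already settled there.
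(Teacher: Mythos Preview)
Your proposal is correct and follows precisely the route the paper intends: the paper states Lemma~\ref{lem:11} without a separate proof, simply noting that it follows by repeating the argument of Lemma~\ref{lem:3}, which is exactly what you do (invoke Lemma~\ref{lem:10} for the uniform-in-$R$ affine bound on the coefficients, then run the integral form/BDG/Gr\"onwall computation). Your added remark on the a~priori finiteness needed for Gr\"onwall is a welcome clarification the paper omits.
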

{ We are ready now to state the main existence result whose proof follows precisely the same limiting argument as in Theorem \ref{prop:22}. As in Remark \ref{rem:uniq}, the uniqueness remains an open problem and it does not easily follow from standard stability arguments.}

\begin{theorem}\label{prop:2222}Under Assumption \ref{asp:11},Assumption \ref{asp:22}, and Assumption \ref{asp:44}, for any $T^*,\kappa>0,$ by choosing $\varkappa>0$ small enough and $\alpha>0$ large enough, the SDE~\eqref{eq:n1} has a strong solution and its law $\rho^{\kappa}_t$ satisfies the related Fokker--Planck equation \eqref{eq:n3} in the weak sense.
\end{theorem}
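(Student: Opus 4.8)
The plan is to repeat, almost verbatim, the limiting construction behind the proof of Theorem~\ref{prop:22}, now applied to the $\kappa$-truncated family \eqref{eq:92929292} in place of \eqref{eq:1919}. First I would invoke Proposition~\ref{prop:333} to obtain, for every $R\in(0,\infty)$, a strong solution $V_t^{\kappa,R}$ of the projected SDE \eqref{eq:92929292} with law $\rho_t^{\kappa,R}$ solving the corresponding Fokker--Planck equation weakly. Then I would gather the two $R$-uniform a priori bounds already established in this section: Lemma~\ref{lem:11} gives $\Ep{|\mathbf{V}^{\kappa,R}_t|^4}\le\Cd$ on $[0,T^*]$ with $\Cd$ independent of $R$, while Lemma~\ref{lem:2} combined with this moment bound makes $v\mapsto v_{\alpha}(\rho_t^{\kappa,R},v)$ Lipschitz on each $B_R(0)$ with a constant $\Cfour(\alpha,R,\Cd,L,\varkappa)$ that depends on $R$ only through the radius of the ball, not through the truncation level. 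Lemma~\ref{lem:10} supplies, in turn, $\normmsq{V^*(v)-v_{\alpha}(\rho_t^{\kappa,R},\cP(v))}\le\Cctr\normmsq{v-V^*(v)}+\Csmall$ with $\Cctr,\Csmall$ independent of $R$; this estimate is precisely what renders Lemma~\ref{lem:11} uniform in $R$, and it is the place where the multi-zone differential inequality \eqref{ieq:144}, hence Lemma~\ref{lem:6666} and the Laplace bound \eqref{ieq:198}, enter. With these ingredients, for each fixed $t$ I would run the diagonal / Ascoli--Arzel\`a extraction along an increasing sequence $R_i\ge 2^i$ to produce a limit map $v_{t,\infty}(\cdot)$ obeying the quantitative bound of \eqref{eq:5959}, whose exotic right-hand side is tuned so that a subsequent Gr\"onwall step closes.

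Next I would apply \cite[Corollary~6.3.1]{arnold1974stochastic} to get a strong solution $V_t^{\kappa,\infty}$ of the decoupled SDE with drift coefficient $-\lambda(v-v_{t,\infty}(v))$ and noise coefficient $\sigma(\normm{v-v_{t,\infty}(v)}+\kappa)$, whose law $\rho_t^{\kappa,\infty}$ solves the associated Fokker--Planck equation weakly against $C_c^\infty([0,T^*]\times\mathbb{R}^d)$. The core of the argument is then the self-consistency relation $v_{\alpha}(\rho_\cdot^{\kappa,\infty},\cdot)=v_{\cdot,\infty}(\cdot)$, i.e.\ that $v_{\cdot,\infty}$ is a fixed point of the formal map $\cT_\infty$. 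Setting $Z_t^i:=V_t^{\kappa,\infty}-V_t^{\kappa,R_i}$ and localizing with the indicator $I_{2^i}(t)$ of \eqref{eq:6666}, I would observe that the additive $\kappa$ cancels in the difference of the two diffusion coefficients, so the It\^o-isometry / Jensen estimate is identical to the one in the proof of Theorem~\ref{prop:22}; Gr\"onwall then yields $\Ep{\normmsq{Z_t^i}I_{2^i}(t)}\le\Csixteen(\lambda,\sigma,d,T^*)\,2^{-i}$. Combining this with the uniform fourth-moment bound (Lemma~\ref{lem:11}, extended to $V_t^{\kappa,\infty}$ exactly as in Remark~\ref{rmk:22}) and the truncation trick of Lemma~\ref{lem:444}, one gets $W_2^2(\rho_t^{\kappa,R_i},\rho_t^{\kappa,\infty})\to 0$ for every $t\in[0,T^*]$. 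Finally, applying \cite[Lemma~3.2]{carrillo2018analytical} to the Gibbs weight $e^{-\alpha A(\cdot,v)}$ --- admissible thanks to the growth and local Lipschitz bounds of Assumption~\ref{asp:22} together with Lemma~\ref{lem:999} --- turns this Wasserstein convergence into $\normm{v_{\alpha}(\rho_t^{\kappa,R_i},v)-v_{\alpha}(\rho_t^{\kappa,\infty},v)}\to 0$ for each $v$; since $v_{\alpha}(\rho_t^{\kappa,R_i},\cdot)$ is left invariant by $\cT_{R_i}$, this forces $v_{t,\infty}(v)=v_{\alpha}(\rho_t^{\kappa,\infty},v)$. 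Hence $V_t^{\kappa,\infty}$ solves \eqref{eq:n1} and $\rho_t^{\kappa}:=\rho_t^{\kappa,\infty}$ solves \eqref{eq:n3} in the weak sense, once $\varkappa$ is chosen small and $\alpha$ large so that Lemmas~\ref{lem:10} and \ref{lem:11} hold on $[0,T^*]$.

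The step I expect to be the genuine obstacle is the pair of $R$-uniform a priori estimates, Lemma~\ref{lem:10} and Lemma~\ref{lem:11}: everything downstream of them is routine, but they rely on the differential inequality \eqref{ieq:144}, whose derivation in the multi-zone setting requires the delicate boundary-cancellation computation of Lemma~\ref{lem:6666} across the interfaces $\partial\zonea$ --- where $V^*$ is not smooth, forcing the regularize-then-pass-to-the-limit device used in that lemma --- together with the quantitative Laplace bound \eqref{ieq:198} built on Lemma~\ref{lem:999}. By contrast, the additive $\kappa$ is harmless: being bounded it affects none of the moment or Lipschitz estimates, it cancels in every difference computation, and (cf.\ Remark~\ref{rmk:7}) it only improves the lower bound in Lemma~\ref{lem:5l}. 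As noted in Remark~\ref{rem:uniq}, compactness is lost when $R_i\to\infty$, so this route gives existence but not uniqueness, which remains open.
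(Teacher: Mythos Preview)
Your proposal is correct and follows precisely the paper's approach: the paper itself states that the proof of Theorem~\ref{prop:2222} ``follows precisely the same limiting argument as in Theorem~\ref{prop:22},'' and you have faithfully reconstructed that argument, correctly identifying Lemmas~\ref{lem:10} and~\ref{lem:11} as the $R$-uniform a priori inputs (replacing Lemmas~\ref{lem:44} and~\ref{lem:3}) and tracing their dependence back to the multi-zone differential inequality~\eqref{ieq:144} via Lemma~\ref{lem:6666} and the Laplace estimate~\eqref{ieq:198}. Your observation that the additive $\kappa$ cancels in all difference computations and is otherwise harmless, and that uniqueness remains open for the same reason as in Remark~\ref{rem:uniq}, is also exactly what the paper records.
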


\subsection{Large time asymptotics}

{ Because of the necessity of using the Lemma \ref{lem:9090} which is a variant of the quantitative Laplace principle, the large time asymptotics for the solution $\rho_t$ requires a specific adaptation with respect to the analysis in Section \ref{sec:LTA1}. Let us start with the following observation.
}

\begin{remark}
	Let $R\to\infty$ in Lemma \ref{lem:6666}, we also have 
	\begin{equation}
		\begin{aligned}
			\frac{d}{dt}\int\normmsq{v-V^*(v)}d\rho^{\kappa}_t(v)
			&\leq-(\lambda-2\sigma^2d)\int\normmsq{v-V^*(v)}d\rho_t^{\kappa}(v)\\
			&\quad+(\lambda+2\sigma^2d)\int\left(\normm{V^*(v)-v_{\alpha}(\rho_t^\kappa,v)}+\kappa\right)^2d\rho_t^{\kappa}(v).
		\end{aligned}
	\end{equation}
\end{remark}
In the following, we  show that the functional 
\begin{equation}
    {\cv^\kappa_t:= \int\dist{v,\cV}^2 d\rho_{t}^{\kappa}= \int\normmsq{v-V^*(v)}d\rho^{\kappa}_t(v)},
\end{equation}
is exponentially decreasing and for any error bound $\epsilon>0$, by choosing proper $\lambda,\sigma>0$, $\kappa,\varkappa>0$ small enough, and $\alpha>0$ large enough, we can achieve 
\begin{equation}
	\cv^\kappa_{T^*}=\int\normmsq{v-V^*(v)}d\rho_{T^*}^{\kappa}\leq 2\epsilon,
\end{equation}
for some $T^*$ defined later. First, we need to analyze the dynamics of $\cv^\kappa_t$.

\begin{lemma}\label{lem:12} For any $T^*,\kappa>0$,
	by choosing $\varkappa$ small enough depending on $\lambda,\sigma,d,\sml,\rho_0,T^*,$\\$\operatorname{diam}(\cV),\kappa,L$; $r$ small enough depending on $\rho_0,\kappa,\lambda,\sigma,d,L,\operatorname{diam}(\cV),T^*,\varkappa$; $\alpha$ large enough depending on $r,\varkappa,\kappa,\lambda,\sigma,d,L,T^*,\rho_0,\operatorname{diam}(\cV),$ we have 
	\begin{equation}
		\frac{d}{dt}\mathcal{V}^\kappa_t\leq -\Cexp\mathcal{V}^\kappa_t+H^{\kappa}_{t,r,p,q,\sml}(\alpha,\varkappa,\mathcal{V}^\kappa_t),\quad t\in [0,T^*],
	\end{equation} 
 where 
  $H^{\kappa}_{t,r,p,q,\sml}(\alpha,\varkappa,\mathcal{V}^\kappa_t):=\Cerr\left[r+(E^{\kappa}_{t,r,p,q,\sml}(\alpha,\varkappa))^2+(E^{\kappa}_{t,r,p,q,\sml}(\alpha,\varkappa))^2\mathcal{V}^\kappa_t+\kappa^2\right]+8(\lambda+2\sigma^2d)q,$ $\mathcal{V}_t^{\kappa}:=\int \normmsq{v-V^*(v)}d\rho_t^{\kappa}$,
   $E^{\kappa}_{t,r,p,q,\sml}(\alpha,\varkappa):=\frac{e^{-\alpha (q-2(2^p\varkappa/\sml^2)^{1/(p-1)})}}{\inf_{w\in\cV}\int_{B_r(w)}\phi_r^{\tau}(u-w)\rho_t^{\kappa}(u)}$, and $\Cexp,\Cerr>0$ only depend on $\lambda,\sigma,d,L,\varkappa,$ $\operatorname{diam}(\cV).$
\end{lemma}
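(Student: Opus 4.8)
\textbf{Proof plan for Lemma \ref{lem:12}.}
The plan is to combine the differential inequality \eqref{ieq:144} (which holds with $R\to\infty$, as recorded in the Remark immediately preceding this lemma) with a lower bound on the denominator $\inf_{w\in\cV}\int_{B_r(w)}\phi_r^\tau(u-w)\,\rho_t^\kappa(u)$ appearing in $E^\kappa_{t,r,p,q,\sml}$, exactly mirroring the bootstrap argument used in the proof of Lemma \ref{lem:44}. Concretely, \eqref{ieq:144} already gives
\begin{equation}
\frac{d}{dt}\mathcal{V}_t^\kappa \leq -\Cexp\mathcal{V}_t^\kappa + \tilde H^{\kappa}_{t,r,p,q,\sml}(\alpha,\varkappa,\mathcal{V}_t^\kappa),
\end{equation}
with $\Cexp>0$ once $r,\lambda,\sigma$ are chosen so that $(\lambda-2\sigma^2d)-(\lambda+2\sigma^2d)\Ctwelve r>0$ (e.g. $\lambda>3\sigma^2 d$ and $r$ small, as in the rule of thumb quoted in Theorem \ref{thm:main0}). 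The only difference between $\tilde H^\kappa$ and the asserted $H^\kappa$ is purely notational: both are of the form $\Cerr[\,r+(E^\kappa)^2+(E^\kappa)^2\mathcal{V}_t^\kappa+\kappa^2\,]+8(\lambda+2\sigma^2d)q$ with $\Cerr=(\lambda+2\sigma^2d)\Ctwelve$, so the inequality in the statement follows once we have shown $E^\kappa_{t,r,p,q,\sml}(\alpha,\varkappa)$ is controlled on $[0,T^*]$.

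First I would set up the a priori bounds that feed the Laplace-principle estimate. By Lemma \ref{lem:11} (with $R\to\infty$, which is legitimate by the limiting construction in Theorem \ref{prop:2222}), we have $\Ep{|\bold V_t^\kappa|^4}\leq \Cd$ on $[0,T^*]$, hence $\sup_{t\in[0,T^*]}\mathcal{V}_t^\kappa<\infty$; and by Lemma \ref{lem:10} we have $\normmsq{V^*(v)-v_\alpha(\rho_t^\kappa,\cP(v))}\leq\Cctr\normmsq{v-V^*(v)}+\Csmall$, which, together with the fourth-moment bound, shows that $B:=\sup_{w\in\cV}\sup_{t\in[0,T^*],\,w'\in B_r(w)}\normm{v_\alpha(\rho_t^\kappa,\cP(w'))-w}$ is finite and, crucially, bounded uniformly in the parameters in the way required below. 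Next, I would invoke Lemma \ref{lem:5l} (applicable to the $\kappa>0$ SDE \eqref{eq:n1} by Remark \ref{rmk:7}, where the $\kappa$-term in fact helps keep the left side positive): it yields
\begin{equation}
\inf_{w\in\cV}\int_{B_r(w)}\phi_r^\tau(u-w)\,d\rho_t^\kappa(u)\geq e^{-q't}\inf_{w\in\cV}\int_{B_r(w)}\phi_r^\tau(u-w)\,d\rho_0(u)=:\Cfourteen>0,
\end{equation}
for all $t\in[0,T^*]$, with $q'$ depending only on $r,\lambda,\sigma,B,d,\tau$ and the positivity of the last integral guaranteed by $\rho_0\in\mathcal{C}_b^2(\mathbb{R}^d)$ having full support in a neighbourhood of each component of $\cV$. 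Therefore $E^\kappa_{t,r,p,q,\sml}(\alpha,\varkappa)\leq \Cfourteen^{-1}e^{-\alpha(q-2(2^p\varkappa/\sml^2)^{1/(p-1)})}$ uniformly on $[0,T^*]$.

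Then the final step is a standard parameter-ordering argument: choose $\varkappa>0$ small enough (depending on $\sml$ and the other listed quantities) so that $2(2^p\varkappa/\sml^2)^{1/(p-1)}<q$ with $q:=3(2^p\varkappa/\sml^2)^{1/(p-1)}$ (so $q-2(2^p\varkappa/\sml^2)^{1/(p-1)}=(2^p\varkappa/\sml^2)^{1/(p-1)}>0$ and $q\to 0$ as $\varkappa\to 0$), then $r$ small enough, then $\alpha$ large enough so that $\Cfourteen^{-1}e^{-\alpha(q-2(2^p\varkappa/\sml^2)^{1/(p-1)})}$ is as small as we like; this makes each of the terms $\Cerr r$, $\Cerr(E^\kappa)^2$, $\Cerr(E^\kappa)^2\mathcal{V}_t^\kappa$ (using $\sup_t\mathcal{V}_t^\kappa<\infty$), $\Cerr\kappa^2$, and $8(\lambda+2\sigma^2d)q$ controlled, yielding the claimed differential inequality with $H^\kappa$ in place of $\tilde H^\kappa$. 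The main obstacle is to make sure the constant $B$ — and hence $q'$ and $\Cfourteen$ — is genuinely \emph{independent} of $\alpha$ (only then can we send $\alpha\to\infty$ afterwards); this is exactly what Lemma \ref{lem:10} provides, since its $\Cctr,\Csmall$ are independent of $R$ and the bound there is uniform over $[0,T^*]$, but one must be careful that the $R\to\infty$ passage preserves these uniformities, which follows from Remark \ref{rmk:22} adapted to the $\kappa>0$ case. A secondary subtlety is the interdependence of the choices (the admissible $\alpha$ depends on $r,\varkappa,\kappa$, which depend on $\lambda,\sigma,d,\sml$, etc.), so the ordering $\varkappa\rightsquigarrow q \rightsquigarrow r \rightsquigarrow \alpha$ must be respected to avoid circularity.
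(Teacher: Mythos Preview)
Your proposal has a genuine gap: the claim that ``the only difference between $\tilde H^\kappa$ and the asserted $H^\kappa$ is purely notational'' is false. Looking at \eqref{ieq:144}, the auxiliary functional reads
\[
\tilde H^{\kappa}_{t,r,p,q,\sml}(\alpha,\varkappa,\mathcal{V}_t^\kappa)=(\lambda+2\sigma^2d)\Ctwelve\Big[r+(E^\kappa)^2\mathcal{V}_t^\kappa+(E^\kappa)^2+{\bf 1}+\kappa^2\Big]+8(\lambda+2\sigma^2d)q,
\]
whereas the $H^\kappa$ in the statement of the lemma has \emph{no} ``$+1$'' term. This extra constant originates from the $\operatorname{diam}(\cV)$ term in the variant Laplace principle (Lemma~\ref{lem:9090}, cf.\ \eqref{ieq:198}): in the multi-component case $v_\alpha(\rho_t^\kappa,v)$ may be a nontrivial convex combination of points near $\cV_1$ and near $\cV_2$, so one can only locate it up to an error of order $\operatorname{diam}(\cV)$. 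Since this ``$+1$'' cannot be driven to zero by any choice of $r,\varkappa,\alpha,\kappa$, simply citing \eqref{ieq:144} would make the subsequent application in Theorem~\ref{prop:66} (where one needs $H^\kappa\leq\Cexp\epsilon$ for arbitrary $\epsilon>0$) impossible.

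The paper eliminates this ``$+1$'' by a three-region decomposition of $\mathbb{R}^d$. On $B_{\tilde R}(0)^c$ with $\tilde R$ large, the term $\Ctwelve(1+\kappa^2)$ is dominated by $\tfrac{1}{100}\normmsq{v-V^*(v)}$ and is absorbed into the contraction constant (this is why the paper's $\Cexp$ carries an extra $-\tfrac{1}{100}(\lambda+2\sigma^2d)$). On a thin tube $B_{\tilde R}(0)\cap(\partial\zonea+B_{\dot r}(0))$ around the zone interface, one invokes Lemma~\ref{lem:13} to make the $\rho_t^\kappa$-mass of this set smaller than $\kappa^2$, so the ``$+1$'' contributes at most $\Ctwelve\kappa^2$. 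Finally, on the good region $B_{\tilde R}(0)\cap(\partial\zonea+B_{\dot r}(0))^c$ there is a uniform gap $C_{\mathrm{gap}}>0$ between $\min_{w\in\zonea}A(w,v)$ and $\min_{w\in\zoneb}A(w,v)$, which allows one to use the \emph{standard} Laplace principle Lemma~\ref{lem:qlp} (not the two-zone variant), yielding a bound without any $\operatorname{diam}(\cV)$ term. Your plan, which relies solely on the crude estimate \eqref{ieq:198} everywhere, misses this entire mechanism and would not prove the lemma as stated.
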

\begin{proof}
	We separate $\mathbb{R}^d$ into three regions: $B_{\R}(0)^c, B_{\R}(0)\cap(\partial\zonea+B_{\rr}(0)),B_{\R}(0)\cap(\partial\zonea+B_{\rr}(0))^c$,  where $\R,\rr>0$ will be defined later.
	
	On $B_{\R}(0)^c$, by \eqref{ieq:198}, we have
	\begin{equation}
		\begin{aligned}
			&\int_{B_{\R}(0)^c}(\normm{V^*(v)-v_{\alpha}(\rho_t^{\kappa},v)}+\kappa)^2d\rho_t^{\kappa}(v)\\
			&\leq  \Ctwelve r\int_{B_{\R}(0)^c} \normmsq{v-V^*(v)}d\rho_t^{\kappa}(v)+
			\Ctwelve\int_{B_{\R}(0)^c} \left ( 1+\kappa^2 \right )d\rho_t^{\kappa}\\
			&\quad+\Ctwelve\int_{B_{\R}(0)^c}\left[r+E^2_{t,r,p,q,\sml}(\alpha,\varkappa)+E^2_{t,r,p,q,\sml}(\alpha,\varkappa)\mathcal{V}_t^{\kappa}\right]d\rho_t^{\kappa}+8\int_{B_{\R}(0)^c}qd\rho_t^\kappa,
		\end{aligned}
	\end{equation}
 here $\Ctwelve$ only depends on $L,\varkappa,\operatorname{diam}(\cV)$. By choosing $\R\geq 20\sqrt{\Ctwelve(1+\kappa^2)}$, we have
	$\normmsq{v-V^*(v)}$\\$\geq 100\Ctwelve\left(1+\kappa^2\right)$ for any $v \in B_{\R}(0)^c$, so
	\begin{equation}
		\Ctwelve\int_{B_{\R}(0)^c}\left (1+\kappa^2 \right )d\rho_t^{\kappa}\leq \frac{1}{100}\int_{B_{\R}(0)^c}\normmsq{v-V^*(v)}d\rho_t^{\kappa}(v),
	\end{equation}
	and  
	\begin{equation}
		\begin{aligned}
			&\int_{B_{\R}(0)^c}(\normm{V^*(v)-v_{\alpha}(\rho_t^{\kappa},v)}+\kappa)^2d\rho_t^{\kappa}(v)\\
			&\leq \left(\Ctwelve r+\frac{1}{100}\right )\int_{B_{\R}(0)^c} \normmsq{v-V^*(v)}d\rho_t^{\kappa}(v)\\
			&\quad+\Ctwelve\int_{B_{\R}(0)^c}\left[r+E^2_{t,r,p,q,\sml}(\alpha,\varkappa)+E^2_{t,r,p,q,\sml}(\alpha,\varkappa)\mathcal{V}_t^{\kappa}\right]d\rho_t^{\kappa}+8\int_{B_{\R}(0)^c}qd\rho_t^\kappa,
		\end{aligned}
	\end{equation}
	
	On $B_{\R}(0)\cap(\partial\zonea+B_{\rr}(0))$, we also have 
	\begin{equation}
		\begin{aligned}
			&\int_{B_{\R}(0)\cap(\partial\zonea+B_{\rr}(0))}(\normm{V^*(v)-v_{\alpha}(\rho_t^{\kappa},v)}+\kappa)^2d\rho_t^{\kappa}(v)\\
			&\leq\Ctwelve r\int_{B_{\R}(0)\cap(\partial\zonea+B_{\rr}(0))} \normmsq{v-V^*(v)}d\rho_t^{\kappa}(v)+\Ctwelve\int_{B_{\R}(0)\cap(\partial\zonea+B_{\rr}(0))}\left  ( 1+\kappa^2 \right )d\rho_t^{\kappa}\\
			&\quad+\Ctwelve\int_{B_{\R}(0)\cap(\partial\zonea+B_{\rr}(0))}\left[r+E^2_{t,r,p,q,\sml}(\alpha,\varkappa)+E^2_{t,r,p,q,\sml}(\alpha,\varkappa)\mathcal{V}_t^{\kappa}\right]d\rho_t^{\kappa}\\
   &\quad+8\int_{B_{\R}(0)\cap(\partial\zonea+B_{\rr}(0))}qd\rho_t^\kappa,
		\end{aligned}
	\end{equation}we will show in Lemma \ref{lem:13} in the Appendix that for any $T^*$, we can choose $\rr$ small enough depends $\rho_0,\lambda,\sigma,d,\kappa,\tilde{R},\operatorname{diam}(\cV),T^*$ such that 
	\begin{equation}
		\int_{B_{\R}(0)\cap(\partial\zonea+B_{\rr}(0))} \left ( 1+\kappa^2 \right )d\rho_t^{\kappa}\leq \kappa^2,\quad \text{for all } t\in [0,T^*],
	\end{equation}
	then we have
	\begin{equation}
		\begin{aligned}
			&\int_{B_{\R}(0)\cap(\partial\zonea+B_{\rr}(0))}(\normm{V^*(v)-v_{\alpha}(\rho_t^{\kappa},v)}+\kappa)^2d\rho_t^{\kappa}(v)\\
			&\leq \Ctwelve r\int_{B_{\R}(0)\cap(\partial\zonea+B_{\rr}(0))} \normmsq{v-V^*(v)}d\rho_t^{\kappa}(v)+\Ctwelve\kappa^2\\
			&\quad+\Ctwelve\int_{B_{\R}(0)\cap(\partial\zonea+B_{\rr}(0))}\left[r+E^2_{t,r,p,q,\sml}(\alpha,\varkappa)+E^2_{t,r,p,q,\sml}(\alpha,\varkappa)\mathcal{V}_t^\kappa\right]d\rho_t^{\kappa}\\
   &\quad +8\int_{B_{\R}(0)\cap(\partial\zonea+B_{\rr}(0))}qd\rho_t^\kappa,
		\end{aligned}
	\end{equation}
	 this $\rr$ is our choice for $\rr$.
	
	On $B_{\R}(0)\cap(\partial\zonea+B_{\rr}(0))^c$: for any $v\in B_{\R}(0)\cap(\partial\zonea+B_{\rr}(0))^c$, we have
\begin{equation}
    A(w,v)-A(V^*_1(v),v)\geq \normmsq{w-V^*_1(v)}-2(\frac{2^p\varkappa}{\sml^2})^{\frac{1}{p-1}}
\end{equation}
and 
\begin{equation}
     \min_{w\in\zonea}A(w,v)-A(V^*_1(v),v)\leq 0,
\end{equation}
 so $\min_{w\in\zonea}A(w,v)\in[A(V^*_1(v),v)-2(2^p\varkappa/\sml^2)^{1/(p-1)},A(V^*_1(v),v)]$, similarly, we have \\$\min_{w\in\zoneb}A(w,v)\in[A(V^*_2(v),v)-2(2^p\varkappa/\sml^2)^{1/(p-1)},A(V^*_2(v),v)]$. And so 
	\begin{equation}
		\begin{aligned}
		    &|{\min_{w\in\zonea}A(w,v)-\min_{w\in\zoneb}A(w,v)}|\\
      &\geq|{\operatorname{dist}(v,\cV_1)^2-\operatorname{dist}(v,\cV_2)^2}|-2(\frac{2^p\varkappa}{\sml^2})^{\frac{1}{p-1}}\\
      &\geq {C_{\mathrm{gap}}}>0,
		\end{aligned}
	\end{equation}
	for some constant  ${C_{\mathrm{gap}}}>0$ that only depends on $\operatorname{diam}(\cV),\rr,\R,\varkappa$, if we 
 choose $\varkappa$ that depends on $\tilde{R},\sml,\rr$
 such that $2(2^p\varkappa/\sml^2)^{1/(p-1)}\ll\rr$. Then by Lemma \ref{lem:qlp}, we can derive
	\begin{equation}
		\normmsq{V^*(v)-v_{\alpha}(\rho_t^{\kappa},v)}\leq 2q+\Ctwentyone r+\Ctwentyone(E^{\kappa}_{t,r,p,q,\sml}(\alpha,\varkappa))^2\mathcal{V}_t^{\kappa},
	\end{equation}
 for $r$ small depending on $\rr$ and $q$ small with $q>2(2^p\varkappa/\sml^2)^{1/(p-1)}$, we also need to choose $q,r$ such that they satisfy $q-2(2^p\varkappa/\sml^2)^{1/(p-1)}+\sup_{v\in B_{\R}(0)\cap(\partial\zonea+B_{\rr}(0))^c}\sup_{w\in B_r(V^*(v))}A(w,v)-A(V^*(v),v)\leq c_{\rm gap}$. In the above, $\Ctwentyone$ only depends on $L,\varkappa,\tilde{R},\operatorname{diam}(\cV)$. So 
	\begin{equation}
	\begin{aligned}
		&\int_{B_{\R}(0)\cap(\partial\zonea+B_{\rr}(0))^c}(\normm{V^*(v)-v_{\alpha}(\rho_t^{\kappa},v)}+\kappa)^2d\rho_t^{\kappa}(v)\\
		&\leq 2\Ctwentyone\int_{B_{\R}(0)\cap(\partial\zonea+B_{\rr}(0))^c}\left[r+(E^{\kappa}_{t,r,p,q,\sml}(\alpha,\varkappa))^2\mathcal{V}^\kappa_t+\kappa^2\right]d\rho_t^{\kappa}\\
  &\quad+8\int_{B_{\R}(0)\cap(\partial\zonea+B_{\rr}(0))^c}qd\rho_t^\kappa.
	\end{aligned}
	\end{equation}
	
	Combine all three parts, we have
	\begin{equation}
		\begin{aligned}
			&\frac{d}{dt}\int\normmsq{v-V^*(v)}d\rho_t^{\kappa}(v)\\
			&\leq-(\lambda-2\sigma^2d)\int\normmsq{v-V^*(v)}d\rho_t^{\kappa}(v)+(\lambda+2\sigma^2d)\int(\normm{V^*(v)-v_{\alpha}(\rho_t,v)}+\kappa)^2d\rho^{\kappa}_t(v)\\
			&\leq -\underbrace{\left[(\lambda-2\sigma^2d)-(\lambda+2\sigma^2d)(\Ctwelve r+\frac{1}{100})\right]}_{\Cexp}\int\normmsq{v-V^*(v)}d\rho_t^{\kappa}\\
			&\quad+\underbrace{4(\lambda+2\sigma^2d)\max\{\Ctwelve,\Ctwentyone\}}_{\Cerr}\left[r+(E^{\kappa}_{t,r,p,q,\sml}(\alpha,\varkappa))^2+(E^{\kappa}_{t,r,p,q,\sml}(\alpha,\varkappa))^2\mathcal{V}_t^{\kappa}+\kappa^2\right]\\
   &\quad +8(\lambda+2\sigma^2d)q,
		\end{aligned}
	\end{equation}
	where we need choose proper $\sigma,\lambda,r$ such that $\Cexp>0$. 
\end{proof}

In the end, we prove the main convergence result of this section.

\begin{theorem}\label{prop:66}Under Assumption \ref{asp:11},Assumption \ref{asp:22},Assumption \ref{asp:44}, for any error bound $\epsilon>0$, choose $\kappa,\varkappa$ small enough and denote 
	$T^*:=\frac{2}{\Cexp}\log(\frac{\cv_0^{\kappa}\vee (2\epsilon)}{2\epsilon})$, where constants $\Cexp,\Cerr$ are from Lemma \ref{lem:12}; Then for \revised{the} SDE \eqref{eq:n1} with $\rho_0^{\kappa}=\rho_0\in\mathcal{P}_4(\mathbb{R}^d)$, we can choose $\alpha$ big enough and with these $\varkappa,\alpha$, we have $\mathcal{V}^{\kappa}_{T^*}\leq2\epsilon$, and on $[0,T^*]$, we have $\mathcal{V}^{\kappa}_{t}\leq (\Cexp\epsilon t+\mathcal{V}^{\kappa}_0)e^{-\Cexp t}$.
\end{theorem}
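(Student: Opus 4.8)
The plan is to reproduce, in the general setting, the argument used for Theorem~\ref{prop:33}, with the ingredients of Section~\ref{sec:LTA1} replaced by their $\kappa$-dependent analogues established in Section~\ref{sec:2}. First I would invoke Theorem~\ref{prop:2222}: for $\varkappa>0$ small and $\alpha>0$ large the SDE~\eqref{eq:n1} has a strong solution on $[0,T^*]$, so It\^o's formula is legitimate and Lemma~\ref{lem:12} applies, giving
\[
\frac{d}{dt}\mathcal{V}^\kappa_t\leq -\Cexp\mathcal{V}^\kappa_t+H^{\kappa}_{t,r,p,q,\sml}(\alpha,\varkappa,\mathcal{V}^\kappa_t),\qquad t\in[0,T^*].
\]
Next, by Lemma~\ref{lem:10} and Lemma~\ref{lem:11} one has, uniformly on $[0,T^*]$ and with constants independent of $\alpha$, both $\normmsq{V^*(v)-v_\alpha(\rho_t^\kappa,v)}\leq \Cctr\normmsq{v-V^*(v)}+\Csmall$ and a fourth-moment bound $\mathbb E[\sup_{s\in[0,t]}\normm{V_s^\kappa}^4]\leq \Cd$; in particular $B:=\sup_{w\in\cV}\sup_{t\in[0,T^*],\,w'\in B_r(w)}\normm{v_\alpha(\rho_t^\kappa,w')-w}$ is finite and $\alpha$-independent, so Lemma~\ref{lem:5l} yields $\inf_{w\in\cV}\int_{B_r(w)}\phi_r^{\tau}(u-w)\,d\rho_t^\kappa(u)\geq \Cfourteen>0$ for all $t\in[0,T^*]$, with $\Cfourteen$ independent of $\alpha$, and also $\sup_{t\in[0,T^*]}\mathcal{V}^\kappa_t\leq \Cd^{1/2}+\operatorname{diam}(\cV)^2=:M$ with $M$ independent of $\alpha$.

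The heart of the proof is then a smallness bookkeeping for
\[
H^{\kappa}_{t,r,p,q,\sml}(\alpha,\varkappa,\mathcal{V}^\kappa_t)=\Cerr\bigl[r+(E^{\kappa}_{t,r,p,q,\sml}(\alpha,\varkappa))^2+(E^{\kappa}_{t,r,p,q,\sml}(\alpha,\varkappa))^2\mathcal{V}^\kappa_t+\kappa^2\bigr]+8(\lambda+2\sigma^2d)q,
\]
with $E^{\kappa}_{t,r,p,q,\sml}(\alpha,\varkappa)=e^{-\alpha(q-2(2^p\varkappa/\sml^2)^{1/(p-1)})}\big/\inf_{w\in\cV}\int_{B_r(w)}\phi_r^{\tau}(u-w)\,d\rho_t^\kappa(u)$. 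Since $\Cexp$ and $\Cerr$ depend only on $\lambda,\sigma,d,L,\varkappa,\operatorname{diam}(\cV)$ and not on $\alpha$, I would first fix $\lambda,\sigma$ so that $\Cexp>0$ (e.g.\ $\lambda>3\sigma^2d$), then choose $\varkappa,q,r,\kappa>0$ small enough that $q>2(2^p\varkappa/\sml^2)^{1/(p-1)}$ and that each of $\Cerr r$, $\Cerr\kappa^2$ and $8(\lambda+2\sigma^2d)q$ is at most $\Cexp\epsilon/4$. This also fixes $\Cexp$, hence the horizon $T^*:=\frac{2}{\Cexp}\log(\frac{\mathcal{V}^\kappa_0\vee(2\epsilon)}{2\epsilon})$, and it fixes $\Cfourteen$ and $M$ from the previous step. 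Finally I would take $\alpha$ large enough — depending on $T^*,\rho_0,r,q,\varkappa,\kappa,\lambda,\sigma,d,L,\operatorname{diam}(\cV)$, but on nothing circular — so that, using $E^{\kappa}_{t,r,p,q,\sml}(\alpha,\varkappa)\leq \Cfourteen^{-1}e^{-\alpha(q-2(2^p\varkappa/\sml^2)^{1/(p-1)})}$, one has $\Cerr(E^{\kappa}_{t,r,p,q,\sml}(\alpha,\varkappa))^2(1+M)\leq\Cexp\epsilon/4$ for all $t\in[0,T^*]$. Summing the four contributions gives $H^{\kappa}_{t,r,p,q,\sml}(\alpha,\varkappa,\mathcal{V}^\kappa_t)\leq\Cexp\epsilon$ on $[0,T^*]$.

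With this bound the differential inequality reads $\frac{d}{dt}\mathcal{V}^\kappa_t\leq-\Cexp\mathcal{V}^\kappa_t+\Cexp\epsilon$ on $[0,T^*]$, so Gr\"onwall's inequality gives $\mathcal{V}^\kappa_t\leq(\Cexp\epsilon t+\mathcal{V}^\kappa_0)e^{-\Cexp t}$ there; substituting $t=T^*$ and using the elementary inequality $\log x+x\leq 2x-1\leq x^2$ for $x\geq1$, exactly as at the end of the proof of Theorem~\ref{prop:33}, yields $\mathcal{V}^\kappa_{T^*}\leq2\epsilon$. I expect the main obstacle to be purely organisational: one must make sure the order of the quantifiers is consistent, i.e.\ that the dependency chain ($\varkappa$ on $\sml$; $r$ on $\varkappa$; $\kappa$ on the geometry and on $T^*$; $T^*$ on $\Cexp$; $\alpha$ last, on everything else) is acyclic — which works precisely because $\Cexp,\Cerr$ are $\alpha$-free — and, closely related, that the $t$-uniform lower bound $\Cfourteen$ and the moment bound $M$ of the first step are secured \emph{before} $\alpha$ is sent to infinity. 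Here the positivity of $\kappa$ is useful, as noted in Remark~\ref{rmk:7}, since it keeps $\rho_t^\kappa(B_r(w))$ bounded below along the flow.
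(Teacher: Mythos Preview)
Your proposal is correct and follows essentially the same route as the paper's own proof: invoke the $\kappa$-analogues of Lemmas~\ref{lem:10}, \ref{lem:11}, \ref{lem:5l} to secure uniform bounds on $\normm{V^*(v)-v_\alpha(\rho_t^\kappa,v)}$, on $\mathcal V_t^\kappa$, and on the mass near $\cV$; feed these into Lemma~\ref{lem:12}; then shrink $\varkappa,q,r,\kappa$ and enlarge $\alpha$ so that $H^{\kappa}_{t,r,p,q,\sml}\leq\Cexp\epsilon$, and conclude by Gr\"onwall and the $\log x+x\leq x^2$ step exactly as in Theorem~\ref{prop:33}. Your explicit four-way splitting of $H^\kappa$ and the discussion of the acyclic dependency chain are somewhat more detailed than what the paper writes out, but the argument is the same.
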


\begin{proof}
	By Lemma \ref{lem:10} with $R\to\infty$, we have 
	\begin{equation}
		\sup_{t\in[0,T^*]}\sup_{v\in \cV+B_r(0)}\normm{V^*(v)-v_{\alpha}(\rho_t^{\kappa},v)}<\infty,
	\end{equation}
	and so by Lemma \ref{lem:5l}, we have
	\begin{equation}
		\inf_{t\in [0,T^*]}{\inf_{w\in\cV}\int_{B_r(w)}\phi_r^{\tau}(u-w)\rho_t^{\kappa}(u)}>0.
	\end{equation}
	By Lemma \ref{lem:11} with $R\to\infty$, we have
	\begin{equation}
		\sup_{t\in [0,T^*]}\mathcal{V}^{\kappa}_t<\infty.
	\end{equation}
	By Lemma \ref{lem:12} and the choice of $\kappa$~(for example $\kappa\in (0,\sqrt{{\Cexp\epsilon}/{(2\Cerr)}})$), we have
	\begin{equation}
		\frac{d}{dt}\mathcal{V}^{\kappa}_t\leq -\Cexp\mathcal{V}^{\kappa}_t+H^{\kappa}_{t,r,p,q,\sml}(\alpha,\varkappa,\mathcal{V}^{\kappa}_t).
	\end{equation}
	Due to the definition, we have $\mathcal{V}^{\kappa}_t=\int\normmsq{v-V^*(v)}d\rho^{\kappa}_t(v),$\\ $H^{\kappa}_{t,r,p,q,\sml}(\alpha,\varkappa,\mathcal{V}^{\kappa}_t)\leq \Cerr\left[r+(E^{\kappa}_{t,r,p,q,\sml}(\alpha,\varkappa))^2+(E^{\kappa}_{t,r,p,q,\sml}(\alpha,\varkappa))^2\mathcal{V}^{\kappa}_t\right]+\frac{\Cexp\epsilon}{2}+8(\lambda+2\sigma^2d)q$ and $E^{\kappa}_{t,r,p,q,\sml}(\alpha,\varkappa)=\frac{e^{-\alpha (q-2(2^p\varkappa/\sml^2)^{1/(p-1)})}}{\inf_{w\in\cV}\int_{B_r(w)}\phi_r^{\tau}(u-w)\rho_t^{\kappa}(u)}$,  so we can choose $\varkappa,q,r$ small enough and $\alpha$ large enough such that
	\begin{equation}
		H_{t,r,p,q}^{\kappa}(\alpha,\varkappa,\mathcal{V}^{\kappa}_t)\leq {\Cexp\epsilon},
	\end{equation}
	then on $[0,T^*]$, we have
	\begin{equation}
		\frac{d}{dt}\mathcal{V}^{\kappa}_t\leq-\Cexp\mathcal{V}^{\kappa}_t+\Cexp\epsilon
	\end{equation}
then by {Gr\"onwall's inequality}, we have
  \begin{equation}
\mathcal{V}^{\kappa}_{t}\leq (\Cexp\epsilon t+\mathcal{V}^{\kappa}_0)e^{-\Cexp t} \text{  for all } t \in [0,T^*],
  \end{equation}
and
	\begin{equation}
		\mathcal{V}^{\kappa}_{T^*}\leq 2\epsilon.
	\end{equation}
\end{proof}

\section{Conclusions and outlook}\label{sec:4}
In this paper \revised{we designed a new variant of CBO type dynamic that can find} the global minimizers of a potentially \revised{non-convex}
and \revised{non-smooth} objective function. {With Theorem \ref{thm:main0} we establish the existence of a solution to the related SDE and Fokker--Planck equation and we prove its contraction to global minimizers of the objective function.

Theorem \ref{thm:main0}, together with standard convergence of discrete time numerical approximations \cite{platen1999introduction} and a quantitative mean-field limit such as \eqref{eq:qmfl} \cite{B1-fornasier2021global,fornasier2023consensus1,gerber2023propagation} would ensure the convergence in probability of the fully time discrete and finite particle scheme
\begin{alignat}{3} \label{eq:alg}
	{V_{k+1}^i - V_{k}^i} &= - \Delta t\lambda\left( V_{k}^i - \conspoint{\empmeasure{k},V^i_{k}}\right)\nonumber\\
	&\quad+ \sqrt{\Delta t}\sigma \left(\N{ V_{k}^i-\conspoint{\empmeasure{k},V^i_{k}}}_2+\kappa\right) B_{k}^i, \\
	{V_0^i} \sim \rho_0 &\quad \text{for all } i =1,\ldots,N, \nonumber
\end{alignat}
in the same spirit of \eqref{B1-ownresult} and \eqref{B1-ownresult2} as done in details in \cite{B1-fornasier2021global,fornasier2023consensus1}. The only difference is that, instead of $v^*$ one considers $V^*(v)$ in the error estimates. We sketch the  adapted estimates of convergence in the Appendix and we outline the open problems that remain for their full validity, opting not to pursue a detailed analysis here. In particular, while the present paper is fully dedicated to the theoretical analysis of \eqref{eq:n1}-\eqref{eq:n3}, an extensive numerical analysis and experimentation of \eqref{eq:alg} will be subject of a subsequent work.}

\section*{Appendix}
\paragraph{The quantitative Laplace principle and variations.}
The next quantitative Laplace principle is adapted from \cite[Proposition 21]{B1-fornasier2021global} and \cite[Proposition E.2]{riedl2023gradient}.


\revised{\begin{lemma}\label{lem:qlp}
	Let $\beta>0,g \in \mathcal{C}\left(\mathbb{R}^d\right)$, $v^*\in\mathbb{R}^d$ and  $\rho\in\mathcal{P}(\mathbb{R}^d)$ with $v^*\in \mathrm{supp}(\rho)$, denote $g^*:=g(v^*)$.  
 For any $r>0$ define $g_r:=\sup _{v \in B_r\left(v^*\right)} g-g^*$. Then, for fixed $\alpha>0$: if 
 there exist {$\eta,\nu>0$ such that $g$ satisfies} $g(v)-g^*\geq (\eta\normm{v-v^*})^{\frac{1}{\nu}}-\beta$, then for any $r>0, q>\beta$, we have
	\begin{equation}\label{eq:141141}
		\left\|v_{\alpha}(\rho)-v^*\right\|_2 \leq \frac{\left(q+g_r\right)^\nu}{\eta}+\frac{\exp (-\alpha (q-\beta))}{\rho\left(B_r\left(v^*\right)\right)} \int\left\|v-v^*\right\|_2 d \rho(v),
	\end{equation}
	where 
	\begin{equation}
	    v_{\alpha}(\rho):=\frac{\int we^{-\alpha g(w)}d\rho(w)}{\int e^{-\alpha g(w)}d\rho(w)};
	\end{equation}
	if there exist $g_{\infty}, R_0, \eta>0$ and $\nu \in(0, \infty)$ such that
	\begin{equation}\label{eq:1553}
	\begin{aligned}
		g(v)-g^*&\geq (\eta\normm{v-v^*})^{\frac{1}{\nu}}-\beta \quad \text { for all } v \in B_{R_0}\left(v^*\right), \\
		g(v)-g^*& >g_{\infty} \quad \text { for all } v \in\left(B_{R_0}\left(v^*\right)\right)^c,
	\end{aligned}    
	\end{equation}
	then for any $r \in\left(0, R_0\right]$ and $q>\beta$ such that $q-\beta+g_r \leq g_{\infty}$, we have \eqref{eq:141141}.
\end{lemma}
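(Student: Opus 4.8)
The plan is to prove the first (global) case and then reduce the local case to it. For the global case, write $v_\alpha(\rho) = v^* + \frac{\int (v-v^*) \omega d\rho(v)}{\int \omega d\rho(v)}$ with $\omega(v) := e^{-\alpha g(v)}$, so that
\begin{equation}
\N{v_\alpha(\rho)-v^*}_2 \leq \frac{\int \N{v-v^*}_2 \, e^{-\alpha(g(v)-g^*)} d\rho(v)}{\int e^{-\alpha(g(v)-g^*)} d\rho(v)},
\end{equation}
where I have multiplied numerator and denominator by $e^{\alpha g^*}$. First I would split the numerator integral over $B_r(v^*)$ and its complement. On $B_r(v^*)$ I bound $\N{v-v^*}_2$ by the radius at which the growth lower bound forces $g(v)-g^* + \beta$ to exceed the threshold; more precisely, for a parameter $q>\beta$ the set $\{v : g(v)-g^* \leq q - \beta\}$ is contained in $\{v : (\eta\N{v-v^*}_2)^{1/\nu} \leq q\}$, i.e.\ in the ball of radius $(q + g_r)^{\nu}/\eta$ once one also accounts for points of $B_r(v^*)$ via $g_r$. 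Partitioning $B_r(v^*)$ into $\{g - g^* \leq q - \beta\}$ and its complement inside $B_r(v^*)$, on the first piece $\N{v-v^*}_2 \leq (q+g_r)^\nu/\eta$ and the weight is $\leq 1$ relative to the local mass, while on the second piece the weight is $\leq e^{-\alpha(q-\beta)}$; outside $B_r(v^*)$ the weight is likewise $\leq e^{-\alpha(q-\beta)}$ since the growth bound is global there. This yields
\begin{equation}
\int \N{v-v^*}_2 e^{-\alpha(g-g^*)} d\rho \leq \frac{(q+g_r)^\nu}{\eta} \int_{B_r(v^*)} e^{-\alpha(g-g^*)} d\rho + e^{-\alpha(q-\beta)} \int \N{v-v^*}_2 \, d\rho.
\end{equation}

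Next I would lower bound the denominator: since $g_r = \sup_{B_r(v^*)}(g-g^*)$, on $B_r(v^*)$ we have $e^{-\alpha(g-g^*)} \geq e^{-\alpha g_r}$, but it is cleaner to simply note $\int e^{-\alpha(g-g^*)} d\rho \geq \int_{B_r(v^*)} e^{-\alpha(g-g^*)} d\rho$. Dividing, the first term becomes exactly $(q+g_r)^\nu/\eta$, and for the second term I divide by $\int_{B_r(v^*)} e^{-\alpha(g-g^*)} d\rho \geq e^{-\alpha g_r}\rho(B_r(v^*))$... but this would produce an extra $e^{\alpha g_r}$ factor, which does not match the claimed bound. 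So instead I keep the denominator split differently: bound $\int e^{-\alpha(g-g^*)}d\rho \geq \rho(B_r(v^*)) \cdot \inf$... The cleanest route that matches \eqref{eq:141141} is: in the numerator, bound the ``outside/far'' piece using $e^{-\alpha(q-\beta)}$, and for the denominator use $\int e^{-\alpha(g-g^*)}d\rho \geq \int_{B_r(v^*)} e^{-\alpha(g-g^*)}d\rho \geq e^{-\alpha g_r}\rho(B_r(v^*))$ only for that second term, while for the first term the $\int_{B_r(v^*)}$ mass cancels directly. Tracking constants carefully (this is the routine-but-delicate part), one arrives at
\begin{equation}
\N{v_\alpha(\rho)-v^*}_2 \leq \frac{(q+g_r)^\nu}{\eta} + \frac{e^{-\alpha(q-\beta)}}{\rho(B_r(v^*))}\int \N{v-v^*}_2 d\rho(v),
\end{equation}
using that $g_r \leq q - \beta + g_r$ and rearranging exponents so the $e^{-\alpha g_r}$ is absorbed; I would double check against \cite[Proposition 21]{B1-fornasier2021global} and \cite[Proposition E.2]{riedl2023gradient} to make sure the exponent bookkeeping is exactly as stated. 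Note $v^* \in \mathrm{supp}(\rho)$ guarantees $\rho(B_r(v^*)) > 0$ so the bound is finite.

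For the second (local) statement, the only change is that the growth lower bound $g(v)-g^* \geq (\eta\N{v-v^*}_2)^{1/\nu}-\beta$ holds merely on $B_{R_0}(v^*)$, compensated by the separation $g(v)-g^* > g_\infty$ on the complement. I would rerun the same splitting but now with $r \in (0,R_0]$ and the additional constraint $q - \beta + g_r \leq g_\infty$: this ensures that on $B_{R_0}(v^*)\setminus B_r(v^*)$ the set $\{g-g^* \leq q-\beta\}$ still sits inside the radius $(q+g_r)^\nu/\eta$, and on $(B_{R_0}(v^*))^c$ we have $g - g^* > g_\infty \geq q - \beta + g_r \geq q - \beta$, so the weight there is again $\leq e^{-\alpha(q-\beta)}$ — exactly what is needed for the far-field term. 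The rest of the estimate is verbatim the global case, giving \eqref{eq:141141} again. The main obstacle is not conceptual but the careful constant-tracking in the numerator/denominator split so that the final exponent is precisely $-\alpha(q-\beta)$ with no stray $e^{\alpha g_r}$; choosing which term to pair with which lower bound on the denominator integral is the crux, and I would handle it by keeping the two numerator pieces over a common denominator $\int_{B_r(v^*)} e^{-\alpha(g-g^*)}d\rho$ throughout and only at the very end replacing that by $e^{-\alpha g_r}\rho(B_r(v^*))$ in the single place where the $\N{v-v^*}_2$-moment appears, while letting it cancel in the other place.
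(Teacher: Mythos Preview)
Your argument has a genuine gap in the treatment of the region outside $B_r(v^*)$. You assert that ``outside $B_r(v^*)$ the weight is likewise $\leq e^{-\alpha(q-\beta)}$ since the growth bound is global there'', but the growth condition only gives $g(v)-g^*\geq (\eta r)^{1/\nu}-\beta$ for $\|v-v^*\|_2\geq r$, and there is no reason this should exceed $q-\beta$ for arbitrary $r>0$ and $q>\beta$ (take $r$ small and $q$ moderate). This is exactly the structural source of the stray $e^{\alpha g_r}$ you noticed; your proposed bookkeeping fix of pairing numerator pieces with different denominator lower bounds cannot remove it, because the problem is in the far-field exponent itself, not in how the denominator is estimated. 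Splitting by the level set $\{g-g^*\leq q-\beta\}$ as you do gives a near-field radius $q^\nu/\eta$ and a far-field exponent $e^{-\alpha(q-\beta)}$, and after dividing by $\int e^{-\alpha(g-g^*)}d\rho\geq e^{-\alpha g_r}\rho(B_r(v^*))$ the extra $e^{\alpha g_r}$ is unavoidable.

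The paper's resolution is to decouple the splitting radius from $r$: introduce the auxiliary radius $\tilde r:=(q+g_r)^\nu/\eta$ (one checks $\tilde r\geq r$) and split the numerator over $B_{\tilde r}(v^*)$ and its complement. On $B_{\tilde r}(v^*)$ one bounds $\|v-v^*\|_2\leq\tilde r$; on $(B_{\tilde r}(v^*))^c$ the growth condition yields $g-g^*\geq(\eta\tilde r)^{1/\nu}-\beta=q+g_r-\beta$. The denominator is still lower-bounded using the \emph{small} ball, $\int e^{-\alpha g}d\rho\geq e^{-\alpha(g_r+g^*)}\rho(B_r(v^*))$. The far-field ratio is then $e^{-\alpha(q+g_r-\beta)}\big/\big(e^{-\alpha g_r}\rho(B_r(v^*))\big)=e^{-\alpha(q-\beta)}/\rho(B_r(v^*))$, and the $g_r$'s cancel exactly. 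The local case runs identically, the hypothesis $q-\beta+g_r\leq g_\infty$ guaranteeing that $\min\{(\eta\tilde r)^{1/\nu}-\beta,\,g_\infty\}=q+g_r-\beta$ on $(B_{\tilde r}(v^*))^c$. Equivalently, you could repair your level-set argument by splitting at the threshold $q+g_r-\beta$ rather than $q-\beta$; the point is that the $g_r$ must be built into the far-field cutoff so that it cancels against the $e^{-\alpha g_r}$ from the denominator bound.
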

\begin{proof}
     We have
    \begin{equation}\label{ieq:1776}
        \begin{aligned}
        \normm{v_{\alpha}(\rho)-v^*}&\leq \int_{\mathbb{R}^d}\normm{w-v^*}\frac{e^{-\alpha g(w)}}{\int_{\mathbb{R}^d}e^{-\alpha g(w)}d\rho(w)}d\rho(w)\\
        &\leq \int_{B_{\tilde{r}}(v^*)}\normm{w-v^*}\frac{e^{-\alpha g(w)}}{\int_{\mathbb{R}^d} e^{-\alpha g(w)}d\rho(w)}d\rho(w)\\
        &\quad  +\int_{(B_{\tilde{r}}(v^*))^c}\normm{w-v^*}\frac{e^{-\alpha g(w)}}{\int_{\mathbb{R}^d} e^{-\alpha g(w)}d\rho(w)}d\rho(w)\\
        &\leq \tilde{r}+\frac{e^{-\alpha (\inf_{w\in (B_{\tilde{r}}(v^*))^c}g(w)-g_r-g^*)}}{\rho(B_r(v^*))}\int_{\mathbb{R}^d}\normm{w-v^*}d\rho(w),
        \end{aligned}
    \end{equation}
    where $\tilde{r}\geq r>0$ will be determined later and in the third inequality, we used $\int_{\mathbb{R}^d}e^{-\alpha g(w)}d\rho(w)\geq e^{-\alpha(g_r+g^*)}\rho(B_r(v^*))$. Now for any $q>\beta$, we choose $\tilde{r}=(q+g_r)^{\nu}/\eta$, then by the inverse growth condition, we have
    \begin{equation}\label{ieq:1766}
        \tilde{r}=\frac{(q+g_r)^{\nu}}{\eta}\geq \frac{(\beta+g_r)^{\nu}}{\eta}\geq \frac{\left((\eta r)^{\frac{1}{\nu}}-\beta+\beta\right)^{\nu}}{\eta}=r,
    \end{equation}
    and 
    \begin{equation}
        \inf_{w\in (B_{\tilde{r}}(v^*))^c}g(w)-g_r-g^*=\inf_{w\in (B_{\tilde{r}}(v^*))^c}g(w)-g^*-g_r\geq(\eta\tilde{r})^{\frac{1}{\nu}}-\beta-g_r= q-\beta.
    \end{equation}
Inserting these into \eqref{ieq:1776}, we finished the proof for the first half of the lemma.

For the second half, we also have \eqref{ieq:1776} and choose $\tilde{r}=(q+g_r)^{\nu}/\eta$, then 
\eqref{ieq:1766} also hold for any $r\in (0,R_0]$ and 
\begin{equation}
        \inf_{w\in (B_{\tilde{r}}(v^*))^c}g(w)-g_r-g^*={\inf_{w\in (B_{\tilde{r}}(v^*))^c}g(w)-g^*}-g_r\geq\min\{(\eta\tilde{r})^{\frac{1}{\nu}}-\beta,g_{\infty}\}-g_r= q-\beta,
    \end{equation}
    which holds true since we choose $q-\beta+g_r\leq g_{\infty}$.
Finally, inserting these into \eqref{ieq:1776}, we also have the second half of the lemma.
\end{proof}
}

{ The above variant of quantitative Laplace principle can be adapted to the assumptions of Section \ref{sec:2} and yields the following auxiliary result.}

\revised{\begin{lemma}\label{lem:9090}
    Suppose $\beta>0$ and $\mathbb{R}^d$ is separated into two regions $\zonea,\zoneb$. Let $g\in\mathcal{C}(\mathbb{R}^d)$ and satisfy the following properties:
    \begin{alignat}{2}
        g(v)-g(v_1^*)&\geq (\eta\normm{v-v_1^*})^{\frac{1}{\nu}}-\beta\quad \text{for some $v_1^*\in\zonea$ and $\forall v\in\zonea$}\label{eq:153153},\\
         g(v)-g(v_2^*)&\geq (\eta\normm{v-v_2^*})^{\frac{1}{\nu}}-\beta\quad \text{for some $v_2^*\in\zoneb$ and $\forall v\in\zoneb$}.
    \end{alignat}
    For any $r>0$ define $g_r:=\max\{\sup_{v\in B_r(v_1^*)}g-g(v_1^*),\sup_{v\in B_r(v_2^*)}g-g(v_2^*)\}$.
    Then for $\rho\in\mathcal{P}(\mathbb{R}^d)$ with $v_1^*\in\mathrm{supp}(\rho)$ and $v_2^*\in\mathrm{supp}(\rho)$ and $\alpha>0$, we have 
\begin{equation}
    \begin{aligned}
        \normm{v_{\alpha}(\rho)-v^*_i}&\leq \normm{v^*_1-v_2^*}+\frac{2(q+g_{r})^{\nu}}{\eta}+\frac{e^{-\alpha (q-\beta)}}{{\inf_{w\in\{v^*_1,v^*_2\}}\int_{B_r(w)}\phi_r^{\tau}(u-w)\rho(u)}}\int\normm{w-v_2^*}d\rho(w)\\
    &\quad +\normm{v^*_1-v_2^*}\frac{e^{-\alpha (q-\beta)}}{{\inf_{w\in\{v^*_1,v^*_2\}}\int_{B_r(w)}\phi_r^{\tau}(u-w)\rho(u)}},\quad\text{for $i=1,2$},
    \end{aligned}
\end{equation}
  for any $r>0,q>\beta$ such that $ \partial B_{q+g_r}(v_1^*)\cap \overline{\zonea}\neq\emptyset$ and $\partial B_{q+g_r}(v_2^*)\cap \overline{\zoneb}\neq\emptyset$ ,  where $v_{\alpha}(\rho)$ is the same as in Lemma~\ref{lem:qlp}.
\end{lemma}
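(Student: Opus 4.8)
The plan is to follow the proof of Lemma~\ref{lem:qlp} almost verbatim, the one new ingredient being that the expectation defining $v_\alpha(\rho)$ must be split over the two zones $\zonea$ and $\zoneb$ and that inside each zone only the corresponding inverse-growth estimate (\eqref{eq:153153} or its companion) is available. It suffices to bound $\normm{v_\alpha(\rho)-v_2^*}$; the estimate for $v_1^*$ follows by the symmetric splitting (interchanging the roles of $\zonea,\zoneb$ and of $v_1^*,v_2^*$, and then converting the remaining $\zonea$-tail integral $\int_{\zonea}\normm{w-v_1^*}\,d\rho$ to $\int\normm{w-v_2^*}\,d\rho$ via $\normm{w-v_1^*}\le\normm{w-v_2^*}+\normm{v_1^*-v_2^*}$, so that $\int\normm{w-v_2^*}\,d\rho$ still appears on the right-hand side). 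Writing $Z:=\int e^{-\alpha g(w)}\,d\rho(w)$, one starts from
\begin{equation*}
\normm{v_\alpha(\rho)-v_2^*}\le \frac1Z\int_{\zoneb}\normm{w-v_2^*}e^{-\alpha g(w)}\,d\rho(w)+\frac1Z\int_{\zonea}\normm{w-v_2^*}e^{-\alpha g(w)}\,d\rho(w),
\end{equation*}
and sets $\tilde r:=(q+g_r)^{\nu}/\eta$.

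For the $\zoneb$-integral I would split $\zoneb=\big(B_{\tilde r}(v_2^*)\cap\zoneb\big)\cup\big(B_{\tilde r}(v_2^*)^{c}\cap\zoneb\big)$. On the near part $\normm{w-v_2^*}\le\tilde r$, so it contributes at most $\tilde r\, Z^{-1}\int_{\zoneb}e^{-\alpha g}\,d\rho\le\tilde r$. On the far part, $\normm{w-v_2^*}\ge\tilde r$ together with the $\zoneb$-growth condition gives $g(w)\ge g(v_2^*)+(\eta\tilde r)^{1/\nu}-\beta=g(v_2^*)+q+g_r-\beta$, hence $e^{-\alpha g(w)}\le e^{-\alpha(g(v_2^*)+q+g_r-\beta)}$. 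For the normaliser I would use, as in Lemma~\ref{lem:qlp} but keeping the test function $\phi_r^{\tau}$ (so that the later lower bound of Lemma~\ref{lem:5l} applies),
\begin{equation*}
Z\ \ge\ \int_{B_r(v_2^*)}e^{-\alpha g(w)}\phi_r^{\tau}(w-v_2^*)\,d\rho(w)\ \ge\ e^{-\alpha(g(v_2^*)+g_r)}\int_{B_r(v_2^*)}\phi_r^{\tau}(w-v_2^*)\,d\rho(w),
\end{equation*}
since $\phi_r^{\tau}\le 1$ and $\sup_{B_r(v_2^*)}g\le g(v_2^*)+g_r$. The factors $e^{-\alpha g(v_2^*)}$ cancel and the $\zoneb$-integral is bounded by $\tilde r+\dfrac{e^{-\alpha(q-\beta)}}{\int_{B_r(v_2^*)}\phi_r^{\tau}(w-v_2^*)\,d\rho}\int_{\zoneb}\normm{w-v_2^*}\,d\rho(w)$.

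For the $\zonea$-integral I would first write $\normm{w-v_2^*}\le\normm{w-v_1^*}+\normm{v_1^*-v_2^*}$; the constant term contributes at most $\normm{v_1^*-v_2^*}\, Z^{-1}\int_{\zonea}e^{-\alpha g}\,d\rho\le\normm{v_1^*-v_2^*}$, which is the first ``plain'' $\normm{v_1^*-v_2^*}$ in the statement. The remaining $\normm{w-v_1^*}$-term is treated exactly as above but centred at $v_1^*$, now invoking the $\zonea$-growth condition \eqref{eq:153153} and the normaliser bound $Z\ge e^{-\alpha(g(v_1^*)+g_r)}\int_{B_r(v_1^*)}\phi_r^{\tau}(w-v_1^*)\,d\rho$; it yields $\tilde r+\dfrac{e^{-\alpha(q-\beta)}}{\int_{B_r(v_1^*)}\phi_r^{\tau}(w-v_1^*)\,d\rho}\int_{\zonea}\normm{w-v_1^*}\,d\rho(w)$. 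Bounding $\int_{\zonea}\normm{w-v_1^*}\,d\rho\le\int_{\zonea}\normm{w-v_2^*}\,d\rho+\normm{v_1^*-v_2^*}$, replacing both denominators by $\inf_{w\in\{v_1^*,v_2^*\}}\int_{B_r(w)}\phi_r^{\tau}(u-w)\,d\rho(u)$, and adding the two zone contributions (so that $\tilde r+\tilde r=2(q+g_r)^{\nu}/\eta$) one recovers precisely the claimed inequality; the hypothesis $\partial B_{q+g_r}(v_1^*)\cap\overline{\zonea}\neq\emptyset$ (resp. $\partial B_{q+g_r}(v_2^*)\cap\overline{\zoneb}\neq\emptyset$) is the mild geometric condition guaranteeing that the split radius $\tilde r$ is at least $r$, so that $B_r(v_i^*)\subseteq B_{\tilde r}(v_i^*)$ and the near/far decomposition is consistent with the ball used in the normaliser estimate.

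The part I expect to be the main (though purely technical) obstacle is the two-zone bookkeeping: because \eqref{eq:153153} and its companion hold only \emph{inside} their own zone, every ball-split $B_{\tilde r}(v_i^*)$ has to be intersected with the right zone, and the ``cross'' integral over the other zone must be rerouted through the triangle inequality $\normm{w-v_j^*}\le\normm{w-v_i^*}+\normm{v_1^*-v_2^*}$ — which is exactly what produces the two $\normm{v_1^*-v_2^*}$ correction terms and the extra $\normm{v_1^*-v_2^*}$ inside the exponential-weighted factor. A related small point is that, in each zone, one must use the minimiser ($v_1^*$ in $\zonea$, $v_2^*$ in $\zoneb$) lying in that zone when lower-bounding $Z$, so that the $e^{-\alpha g(v_i^*)}$ prefactor cancels against the numerator; this is what forces the \emph{joint} infimum over $\{v_1^*,v_2^*\}$ to appear in the final denominator. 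Apart from carefully tracking these constants and zone restrictions, the argument is identical to that of Lemma~\ref{lem:qlp}, and — as with \eqref{eq:1185}--\eqref{eq:1186} and Lemma~\ref{lem:6666} — it is then applied with $g=A(\cdot,v)$, $(\eta,\nu)=(1,1/2)$, $\beta=2(2^{p}\varkappa/\sml^2)^{1/(p-1)}$, and $v_i^*=V_i^*(v)$ to obtain \eqref{ieq:198}.
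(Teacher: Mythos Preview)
Your proposal is correct and follows essentially the same approach as the paper's proof: split the expectation over $\zonea$ and $\zoneb$, in each zone perform the near/far decomposition at radius $\tilde r=(q+g_r)^{\nu}/\eta$ centred at the corresponding $v_i^*$, reroute the cross-zone integral via $\normm{w-v_2^*}\le\normm{w-v_1^*}+\normm{v_1^*-v_2^*}$, and then pass to the joint infimum in the denominator. One minor remark: in the paper the inequality $\tilde r\ge r$ is obtained directly from the growth condition (applied at a point of $\partial B_r(v_i^*)$ lying in the relevant zone), rather than from the hypothesis $\partial B_{q+g_r}(v_i^*)\cap\overline{\mathcal{Z}_i}\neq\emptyset$, so your attribution of that hypothesis is slightly off, but this does not affect the validity of your argument.
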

\begin{proof}
    We only prove the case $i=2$, the case $i=1$ is similar. In this case, we have
\begin{equation}
    \begin{aligned}
        \normm{v_{\alpha}(\rho)-v_2^{*}}&\leq \int_{\mathbb{R}^d}\normm{w-v_2^{*}}\frac{e^{-\alpha g(w)}}{\int_{\mathbb{R}^d}e^{-\alpha g(w)}d\rho(w)}d\rho(w)\\
        &= \underbrace{\int_{\zonea}\normm{w-v^*_2}\frac{e^{-\alpha g(w)}}{\int_{\mathbb{R}^d}e^{-\alpha g(w)}d\rho(w)}d\rho(w)}_{I}\\
        &\quad+\underbrace{\int_{\zoneb}\normm{w-v_2^*}\frac{e^{-\alpha g(w)}}{\int_{\mathbb{R}^d}e^{-\alpha g(w)}d\rho(w)}d\rho(w)}_{II}.
    \end{aligned}
\end{equation}
For term $I$, we have
\begin{equation}
    \begin{aligned}
        I&=\int_{\zonea}\normm{w-v_2^*}\frac{e^{-\alpha g(w)}}{\int_{\mathbb{R}^d}e^{-\alpha g(w)}d\rho(w)}d\rho(w)\\
        &\leq \normm{v^*_1-v_2^*}+\int_{\zonea}\normm{w-v_1^*}\frac{e^{-\alpha g(w)}}{\int_{\mathbb{R}^d}e^{-\alpha g(w)}d\rho(w)}d\rho(w)\\
        &\leq \normm{v^*_1-v_2^*}+\int_{\zonea\cap B_{\tilde{r}}(v_1^*)}\normm{w-v_1^*}\frac{e^{-\alpha g(w)}}{\int_{\mathbb{R}^d}e^{-\alpha g(w)}d\rho(w)}d\rho(w)\\
         &\quad+\int_{\zonea\backslash B_{\tilde{r}}(v_1^*)}\normm{w-v_1^*}\frac{e^{-\alpha g(w)}}{\int_{\mathbb{R}^d}e^{-\alpha g(w)}d\rho(w)}d\rho(w)\\  
         &\leq \normm{v^*_1-v_2^*}+\tilde{r}+\frac{e^{\alpha (g_{1,r}+g(v_1^*))}}{\rho(B_r(v_1^*))}\int_{\zonea\backslash B_{\tilde{r}}(v_1^*)}\normm{w-v_1^*}e^{-\alpha g(w)}d\rho(w)\\ 
         &\leq \normm{v^*_1-v_2^*}+\tilde{r}+\frac{e^{-\alpha \left(\inf_{w\in \zonea\backslash B_{\tilde{r}}(v_1^*)}g(w)-g_{1,r}-g(v_1^*)\right)}}{\rho(B_r(v_1^*))}\int_{\zonea\backslash B_{\tilde{r}}(v_1^*)}\normm{w-v_1^*}d\rho(w),
    \end{aligned}
\end{equation}
where $g_{1,r}:=\sup_{w\in B_r(v_1^*)}g(w)-g(v_1^*)$, $0<r\leq \tilde{r}$ will be determined later and in the third inequality, we used $\int_{\mathbb{R}^d}e^{-\alpha g(w)}d\rho(w)\geq e^{-\alpha (g_{1,r}+g(v_1^*))}\rho(B_r(v_1^*))$. Now for any $q>\beta$, we choose $\tilde{r}=(q+g_{1,r})^{\nu}/\eta$, then we have 
\begin{equation}
   \begin{aligned}
        \tilde{r}&=\frac{(q+g_{1,r})^{\nu}}{\eta}\geq \frac{(g_{1,r}+\beta)^{\nu}}{\eta}=\frac{\left(\sup_{w\in B_r(v_1^*)}g(w)-g(v_1^*)+\beta\right)^{\nu}}{\eta}\\
        &\geq \sup_{w\in B_r(v_1^*)}\normm{w-v_1^*}=r,
   \end{aligned}
\end{equation}
 the last inequality is due to \eqref{eq:153153}. Further using \eqref{eq:153153} and the definition of $\tilde{r}$, we have
\begin{equation}
    \inf_{w\in \zonea\backslash B_{\tilde{r}}(v_1^*)}g(w)-g_{1,r}-g(v_1^*)\geq (\eta\tilde{r})^{\frac{1}{\nu}}-\beta-g_{1,r}=q-\beta,
\end{equation}
so, by combining all of the above, we have
\begin{equation}
    \begin{aligned}
        I&\leq \normm{v^*_1-v_2^*}+\frac{(q+g_{1,r})^{\nu}}{\eta}+\frac{e^{-\alpha (q-\beta)}}{\rho(B_r(v_1^*))}\int_{\zonea\backslash B_{\tilde{r}}(v_1^*)}\normm{w-v_1^*}d\rho(w)\\
    &\leq \normm{v^*_1-v_2^*}+\frac{(q+g_{1,r})^{\nu}}{\eta}+\frac{e^{-\alpha (q-\beta)}}{\rho(B_r(v_1^*))}\int_{\zonea}\normm{w-v_2^*}d\rho(w)\\
    &\quad +\normm{v^*_1-v_2^*}\frac{e^{-\alpha (q-\beta)}}{\rho(B_r(v_1^*))},
    \end{aligned}
\end{equation}
for any $r,q>\beta$.

For term $II$, we have
\begin{equation}
   \begin{aligned}
        II&=\int_{\zoneb}\normm{w-v_2^*}\frac{e^{-\alpha g(w)}}{\int_{\mathbb{R}^d}e^{-\alpha g(w)}d\rho(w)}d\rho(w)\\
        &\leq \int_{\zoneb\cap B_{\tilde{r}}(v^*_2)}\normm{w-v_2^*}\frac{e^{-\alpha g(w)}}{\int_{\mathbb{R}^d}e^{-\alpha g(w)}d\rho(w)}d\rho(w)\\
         &\quad+\int_{\zoneb\backslash B_{\tilde{r}}(v^*_2)}\normm{w-v_2^*}\frac{e^{-\alpha g(w)}}{\int_{\mathbb{R}^d}e^{-\alpha g(w)}d\rho(w)}d\rho(w),
   \end{aligned} 
\end{equation}
by a similar analysis as in the term $I$, we have
\begin{equation}
    II\leq \frac{(q+g_{2,r})^{\nu}}{\eta}+\frac{e^{-\alpha (q-\beta)}}{\rho(B_r(v^*_2))}\int_{\zoneb}\normm{w-v_2^*}d\rho(w),
\end{equation}
for any $r>0,q>\beta$, where $g_{2,r}:=\sup_{w\in B_r(v^*_2)}g(w)-g(v^*_2)$.

Combine term $I$ and $II$, we have
\begin{equation}
    \begin{aligned}
        \normm{v_{\alpha}(\rho)-v^*_2}&\leq \normm{v^*_1-v_2^*}+\frac{(q+g_{1,r})^{\nu}}{\eta}+\frac{(q+g_{2,r})^{\nu}}{\eta}\\
        &\quad+\frac{e^{-\alpha (q-\beta)}}{\min\{\rho(B_r(v_1^*)),\rho(B_r(v^*_2))\}}\int\normm{w-v_2^*}d\rho(w)\\
    &\quad +\normm{v^*_1-v_2^*}\frac{e^{-\alpha (q-\beta)}}{\min\{\rho(B_r(v_1^*)),\rho(B_r(v^*_2))\}},
    \end{aligned}
\end{equation}
finally, using 
\begin{alignat}{1}
    \min\{\rho(B_r(v^*_1),\rho(B_r(v^*_2)\}&\geq {\inf_{w\in\{v^*_1,v^*_2\}}\int_{B_r(w)}\phi_r^{\tau}(u-w)\revised{d}\rho(u)},
\end{alignat}
we proved the lemma.
\end{proof}
}

\paragraph{On the continuity of the law on sets.}
The next lemma shows that $\rho_t^{\kappa}(B_R(0)\cap(\partial \zonea+B_r(0)))$ is continuous in terms of $t$, then by a further analysis, we can find $r$ small enough such that $\rho_t^{\kappa}(B_R(0)\cap(\partial \zonea+B_r(0)))\leq\epsilon'$, for any $\epsilon'>0$.
\begin{lemma}\label{lem:13}
	For any $T^*,\epsilon'>0$, we can choose $r$ small enough, \revised{depending} on $\operatorname{diam}(\cV),\lambda,\sigma,d,$\\
 $R,T^*,\epsilon',\rho_0$, $\varkappa$ small enough depending on $\lambda,\sigma,d,\sml,$
 $\alpha$ large enough depending on $\lambda,\sigma,d,T^*,L,\kappa,\varkappa,$\\$\operatorname{diam}(\cV),\epsilon'$ such that $\rho_t^{\kappa}(B_R(0)\cap(\partial \zonea+B_r(0)))\leq\epsilon'$, for any $t\in [0,T^*]$. 
\end{lemma}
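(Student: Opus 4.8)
The plan is to bound the mass that $\rho_t^\kappa$ puts on the thin tube $B_R(0)\cap(\partial\zonea+B_r(0))$ by combining a uniform (in $t\in[0,T^*]$) continuity estimate of $t\mapsto\rho_t^\kappa(S_r)$, where $S_r$ is a slight fattening of the tube, with a decay estimate of $\rho_0(S_r)$ as $r\to 0$. The key structural fact to exploit is that $\partial\zonea$ is a smooth hypersurface of dimension $d-1$ (it is the bisector set of two convex compact bodies $\cV_1$, $\cV_2$ with smooth boundaries, hence smooth away from a measure-zero singular set), so its $r$-neighborhood intersected with the ball $B_R(0)$ has Lebesgue measure $O(r)$ with a constant depending only on $R$ and $\operatorname{diam}(\cV)$. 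Since $\rho_0\in\mathcal{C}_b^2(\mathbb{R}^d)$, we get $\rho_0(B_R(0)\cap(\partial\zonea+B_r(0)))\le C_{\rho_0}\,r$.

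The main work is to propagate this smallness forward in time. First I would test the (regularized, or directly the weak) Fokker--Planck equation \eqref{eq:n3} against a smooth cutoff $\psi_r$ that equals $1$ on $B_R(0)\cap(\partial\zonea+B_r(0))$, is supported in $B_{R+1}(0)\cap(\partial\zonea+B_{2r}(0))$, and satisfies $\|\nabla\psi_r\|_\infty\le C/r$, $\|\nabla^2\psi_r\|_\infty\le C/r^2$. Writing $m_r(t):=\int\psi_r\,d\rho_t^\kappa$ and using \eqref{eq:n3}, we obtain
\begin{equation}
\frac{d}{dt}m_r(t) = -\lambda\int \inner{v-v_\alpha(\rho_t^\kappa,v)}{\nabla\psi_r(v)}\,d\rho_t^\kappa(v) + \frac{\sigma^2}{2}\int (\normm{v-v_\alpha(\rho_t^\kappa,v)}+\kappa)^2\,\Delta\psi_r(v)\,d\rho_t^\kappa(v).
\end{equation}
The integrands are supported in $\mathrm{supp}(\nabla\psi_r)\subset B_{R+1}(0)\cap(\partial\zonea+B_{2r}(0))\setminus(\partial\zonea+B_r(0))$ (an annular tube), and on that set, using Lemma \ref{lem:10} (or the bound \eqref{ieq:198}) together with Lemma \ref{lem:11}, one controls $\normm{v-v_\alpha(\rho_t^\kappa,v)}$ and $\normm{v}$ by a constant $C_*$ depending on $R+1$, $\operatorname{diam}(\cV)$, $\kappa$, $T^*$ but not on $r$. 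Hence
\begin{equation}
\Big|\frac{d}{dt}m_r(t)\Big| \le \Big(\lambda C_*\,\frac{C}{r} + \frac{\sigma^2}{2}(C_*+\kappa)^2\,\frac{C}{r^2}\Big)\,\rho_t^\kappa\big(B_{R+1}(0)\cap(\partial\zonea+B_{2r}(0))\big).
\end{equation}
The obvious difficulty is the factor $r^{-2}$ from $\Delta\psi_r$, which is not compensated by an $r^2$ gain; a crude Grönwall here would not close. The remedy I would use is a \emph{layered} argument: choose the cutoff to transition over a width $\delta$ with $r\ll\delta\ll 1$ to be fixed at the end, so that $\|\nabla\psi\|_\infty\le C/\delta$, $\|\nabla^2\psi\|_\infty\le C/\delta^2$ but $\psi\equiv 1$ still on the tube of width $r$; then the annular support has Lebesgue measure $O(\delta)$ (by the smooth-hypersurface estimate), hence $\rho_t^\kappa$-mass at most $C(\delta + \text{(correction terms from } \eqref{ieq:198}\text{)})$ uniformly in $t$ — here I would invoke exactly the same a priori bounds that are used to bound $H$ in Lemma \ref{lem:12} to absorb the $E_{t,r,p,q,\sml}$-terms, noting these do not involve $r^{-1}$. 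This yields
\begin{equation}
m_r(T^*) \le m_r(0) + T^*\Big(\frac{\lambda C_* C}{\delta} + \frac{\sigma^2(C_*+\kappa)^2 C}{2\delta^2}\Big)\cdot C\delta \le C_{\rho_0}\,\delta + C' T^*\,\frac{1}{\delta},
\end{equation}
which still has the wrong sign in $\delta$.

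Because the naive bound above does not suffice, the genuinely correct route — and the step I expect to be the crux — is to not use a PDE energy estimate at the level of $m_r$ but rather a \emph{probabilistic} argument on the SDE \eqref{eq:n1}: estimate $\mathbb{P}(V_t^\kappa \in B_R(0)\cap(\partial\zonea+B_r(0)))$ directly. The drift of $V_t^\kappa$ is bounded on $B_R(0)$ by $C_*$ (uniformly on $[0,T^*]$ by Lemma \ref{lem:11}), and the diffusion coefficient $\sigma(\normm{V_t^\kappa-v_\alpha}+\kappa)$ is \emph{bounded below by} $\sigma\kappa>0$ and above by $\sigma(C_*+\kappa)$ on that ball. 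For a nondegenerate (elliptic) diffusion with bounded coefficients, the transition density at time $t>0$ is bounded above by a Gaussian kernel (Aronson-type bound, or a comparison/Girsanov argument removing the drift at cost of a bounded exponential martingale factor), so
\begin{equation}
\mathbb{P}\big(V_t^\kappa\in B_R(0)\cap(\partial\zonea+B_r(0))\big) \le \int_{\mathbb{R}^d}\rho_0(dx)\int_{B_R(0)\cap(\partial\zonea+B_r(0))} p_t(x,y)\,dy \le C(T^*,R,\kappa,\sigma,d)\;\big|B_R(0)\cap(\partial\zonea+B_r(0))\big| \le C''\,r.
\end{equation}
The only subtlety for small $t$ is that $\rho_0$ itself may already concentrate near the tube, but $\rho_0\in\mathcal{C}_b^2$ has a bounded density, so $\rho_0(B_R(0)\cap(\partial\zonea+B_r(0)))\le \|\rho_0\|_\infty\cdot C r$, and for $t$ in a short initial layer one uses this directly; for $t$ bounded away from $0$ one uses the Gaussian upper bound; and the two regimes patch together. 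In all cases the bound is $C\cdot r$ with $C$ depending on $T^*,R,\operatorname{diam}(\cV),\kappa,\sigma,\lambda,d,\rho_0$ and, through the bounds \eqref{ieq:198}/Lemma \ref{lem:10}, on $\alpha,L,\varkappa$. Thus choosing $r$ small enough (depending on all these constants and on $\epsilon'$, and choosing $\varkappa$ small, $\alpha$ large to make the auxiliary constants harmless as in the statement) gives $\rho_t^\kappa(B_R(0)\cap(\partial\zonea+B_r(0)))\le C'' r \le \epsilon'$ for all $t\in[0,T^*]$, which is the claim. The main obstacle, to reiterate, is handling the degeneracy of the diffusion: it is only nondegenerate thanks to $\kappa>0$, and one must be careful that the Gaussian/Aronson upper bound constant stays finite uniformly on $[0,T^*]$ — this is exactly where the hypothesis $\kappa>0$ (as opposed to $\kappa=0$) is essential, consistent with Remark \ref{rmk:7}.
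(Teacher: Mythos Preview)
Your second approach, via Aronson-type Gaussian upper bounds on the transition density, is a natural idea and, if it went through, would give a cleaner quantitative estimate $\rho_t^\kappa\big(B_R(0)\cap(\partial\zonea+B_r(0))\big)=O(r)$ than the paper obtains. However, there is a genuine gap. The standard Aronson upper bound, and equally the Girsanov drift-removal you invoke, require \emph{globally} bounded coefficients, whereas here both the drift $v\mapsto -\lambda(v-v_\alpha(\rho_t^\kappa,v))$ and the diffusion $v\mapsto\sigma(\normm{v-v_\alpha(\rho_t^\kappa,v)}+\kappa)$ grow linearly at infinity. You establish boundedness only on $B_R(0)$, but the transition kernel $p_t(x,y)$ depends on the whole trajectory, which can exit and re-enter $B_R(0)$; and removing a linearly growing drift by Girsanov produces an exponential martingale that is not in any $L^p$ uniformly. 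The gap is not fatal---one can truncate the coefficients outside a large ball $B_M(0)$, apply Aronson to the truncated dynamics, and use the fourth-moment bound of Lemma~\ref{lem:11} to make the probability of ever leaving $B_M(0)$ smaller than $\epsilon'/2$---but this localisation step is missing from your argument and is not automatic.

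The paper takes a quite different, softer route and avoids any quantitative density bound. It only uses the \emph{qualitative} fact that $\rho_t^\kappa$ is absolutely continuous for each $t$ (this follows from parabolic regularity for Fokker--Planck equations, precisely because $\kappa>0$ makes the diffusion nondegenerate); hence for each fixed $t$, $\int\phi_r\,d\rho_t^\kappa\to0$ as $r\to0$ by monotone convergence, where $\phi_r$ is a smooth cutoff adapted to the tube and monotone increasing in $r$. It then argues by contradiction that the threshold radius $r_\epsilon^*(t):=\sup\{r:\int\phi_r\,d\rho_t^\kappa\le\epsilon\}$ is bounded below uniformly on $[0,T^*]$: if $r_\epsilon^*(t_i)\to0$ along some $t_i\to t_0$, one tests the Fokker--Planck equation against the \emph{fixed} cutoff $\phi_{r_0}$ with $r_0=r_\epsilon^*(t_0)>0$ (so the constant in $\big|\frac{d}{dt}\int\phi_{r_0}\,d\rho_t^\kappa\big|\le C$ is fixed---no $r^{-2}$ blow-up), and combines this time-continuity with a strictly negative contribution from $\int(\phi_{r_0-\theta}-\phi_{r_0})\,d\rho_{t_i}^\kappa$, lower-bounded via Lemma~\ref{lem:5l}, to force $r_\epsilon^*(t_i)\ge r_0/2$ for large $i$, a contradiction. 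Your first, abandoned, PDE attempt is in fact closer in spirit to the paper's argument; the point you missed is that one does not shrink the cutoff width with $r$ but works at a fixed scale and uses continuity in $t$ together with monotonicity of $\phi_r$ in $r$.
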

\begin{proof}

	We can find a { smooth} function $\phi_{r}(v)$~(see Remark \ref{rmk:8}) which will vanish outside of $(\partial \zonea+B_{2r}(0))\cap B_{2R}(0)$ and has range $[C_{\mathrm{s}},C_{\mathrm{h}}]$~($C_{\mathrm{h}}\geq C_{\mathrm{s}}>0$) inside of $(\partial \zonea+B_{r}(0))\cap B_{R}(0)$ { independent of $r>0$}, then we have $\rho^{\kappa}_t((\partial \zonea+B_{r}(0))\cap B_{R}(0))\leq \Ctwentytwo\int \phi_{r}(v)d\rho^{\kappa}_t(v)$ for some constant $\Ctwentytwo>0$ { independent of $r>0$}. So to prove the lemma, it is enough to show that for any $\epsilon$, we have $\sup_{t\in [0,T^*]}\int \phi_{r}(v)d\rho^{\kappa}_t(v)\leq \epsilon$ for $r$ small enough.
	
	Firstly, we have that  $(\normm{v-v_{\alpha}(\rho^{\kappa}_t,v)}+\kappa)^{\frac{d}{d+1}}\rho^{\kappa}_t(v)$ is
	absolutely continuous with respect to the Lebesgue measure, see \cite[Theorem 6.3.1]{bogachev2022fokker}. Moreover, since
	$(\normm{v-v_{\alpha}(\rho^{\kappa}_t,v)}+\kappa)^{\frac{d}{d+1}}\geq \kappa^{\frac{d}{d+1}}>0$, we have that
	$\rho^{\kappa}_t$ is  absolutely continuous for any $t\in [0,T^*]$, and so by monotone convergence theorem and the properties that $\phi_r(v)$ is monotone increasing in terms of $r$, $\phi_0(v)=0$~(see Remark \ref{rmk:8}), we have for any $t\in [0,T^*]$ that $\int\phi_{r}(v)d\rho^{\kappa}_t(v)\to 0$ as $r\to 0$.  
	
	Secondly, define $r_{\epsilon}^*(t):=\sup\left\{r>0: \int \phi_r(v)d\rho^{\kappa}_t(v)\leq \epsilon\right\}$, then we have $r_{\epsilon}^*(t)>0$ for any $t\in [0,T^*]$. In the following, we want to show $r_{\epsilon}^*(t)\geq r_\epsilon>0$ for all $t\in [0,T^*]$, for some constant $r_\epsilon>0$. If not, without loss of generality, there will be a increasing sequence $\left\{t_i\right\}_{i=1}^{\infty}$, such that $\lim_{i\to\infty}t_i=t_0\in [0,T^*]$ and $\lim_{i\to\infty}r^*_{\epsilon}(t_i)=0$. For simplicity, we will denote $r_0:=r^*_{\epsilon}(t_0)$ and we know $r_0>0$.
	
	We have for any $r,t$ that
	\begin{equation}\label{eq:36}
		\begin{aligned}
			\frac{d}{dt}\int\phi_{r}(v)d\rho^{\kappa}_t(v)&=\int -\lambda\inner{v-v_{\alpha}(\rho^{\kappa}_t,v)}{\nabla \phi_{r}(v)}+\frac{\sigma^2}{2}\normmsq{v-v_{\alpha}(\rho^{\kappa}_t,v)}\Delta \phi_{r}(v)d\rho^{\kappa}_t(v)\\&\quad\in [-\Ctwentythree,\Ctwentythree],
		\end{aligned}
	\end{equation}
	for some $\Ctwentythree\in (0,\infty)$ depending on $r,R,d,\lambda,\sigma$, since $\phi_{r}(v)$ vanishes outside $B_{2r}(0)$, and inside $B_{2R}(0)$ we have $\nabla\phi_{r}(v)$ and $\Delta\phi_{r}(v)$ bounded. Based on the above equation, we have
	\begin{equation}
		\int\phi_{r_0}(v)d\rho^{\kappa}_{t_i}(v)\leq \int \phi_{r_0}(v)d\rho^{\kappa}_{t_0}(v)+\Ctwentythree\left(t_0-t_i\right)=\epsilon+\Ctwentythree\left(t_0-t_i\right).
	\end{equation}
	On the other hand, we have for any $\theta>0$ that
	\begin{equation}
		\begin{aligned}
			\int \phi_{r_0-\theta}(v)d\rho^{\kappa}_{t_i}(v)=\int\phi_{r_0}(v)d\rho^{\kappa}_{t_i}(v)+\underbrace{\int\left[\phi_{r_0-\theta}(v)-\phi_{r_0}(v)\right]d\rho^{\kappa}_{t_i}(v)}_{(I)},
		\end{aligned}
	\end{equation}
	while we know that the term $(I)$ is strictly negative, because $\phi_r$ is monotonically increasing with respect to $r$.
 {By Lemma \ref{lem:5l} we have \begin{equation}
     \rho^{\kappa}_t(B_{\rr}(v))\geq e^{-q'T^*}\int \phi_{\rr}^{\tau}(z-v)d\rho^{\kappa}_0(z)>0,\end{equation}
 for any $t\in [0,T^*]$ and $\rr>0,v\in(\partial\zonea\pm (2r-\frac{\theta}{2}))\cap B_R(0)$, here we use $\partial\zonea\pm (2r-\frac{\theta}{2})$ to denote the set $\cup_{v\in\partial\zonea}v\pm (2r-\frac{\theta}{2}))N(v)$ where $N$ is the Gauss map. The constant $q'$ is from Lemma \ref{lem:5l}. 
 
 Then due to $\phi_{r_0-\theta}(v)-\phi_{r_0}(v)\in [-\Ctwentyfour,-\Ctwentyfive],\quad \forall v\in (\partial\zonea\pm(2r-\frac{\theta}{2}))\cap B_{R/2}(0)$, where $\Ctwentyfour,\Ctwentyfive>0$ only depend on $r_0,\theta$, we have $(I)\leq - \Ctwentysix e^{-q'T^*}\int \phi_{\frac{\theta}{4}}^{\tau}(z-v)d\rho^{\kappa}_0(z)<0$ for any $v'\in (\partial\zonea\pm(2r-\frac{\theta}{2}))\cap B_{R/2}(0)$, where $\Ctwentysix$ depends on $r_0,\theta$.} And so we have
	\begin{equation}
		\begin{aligned}
			\int \phi_{r_0-\theta}(v)d\rho^{\kappa}_{t_i}(v)&\leq \int\phi_{r_0}(v)d\rho^{\kappa}_{t_i}(v)- \Ctwentysix e^{-q'T^*}\int \phi_{\frac{\theta}{4}}^{\tau}(z-v')d\rho^{\kappa}_0(z)\\
			&\leq \epsilon+\Ctwentythree\left(t_0-t_i\right)- \Ctwentysix e^{-q'T^*}\int \phi_{\frac{\theta}{4}}^{\tau}(z-v')d\rho^{\kappa}_0(z),
		\end{aligned}
	\end{equation}
 for any $v'\in (\partial\zonea\pm(2r-\frac{\theta}{2}))\cap B_{R/2}(0)$.
 
	Thirdly, based on the above analysis,  we can choose $\theta = \frac{1}{2}r_0$, and $i_0$ large enough so that for any $i\geq i_0$, we have $\Ctwentythree\left(t_0-t_i\right)- \Ctwentysix e^{-q'T^*}\int \phi_{\frac{\theta}{4}}^{\tau}(z-v')d\rho^{\kappa}_0(z)< 0$ and $r^*_{\epsilon}(t_i)\leq \frac{1}{3}r_0$. However in this case, we have
	\begin{equation}
		\int \phi_{\frac{r_0}{2}}(v)d\rho^{\kappa}_{t_i}(v)< \epsilon,
	\end{equation}
	for any $i\geq i_0$, and this means $r^*_{\epsilon}(t_i)>\frac{1}{2}r_0$ when $i\geq i_0$, which is a contradiction. So we have proved that $r_\epsilon>0$ for any $t\in [0,T^*]$. 
	
	Lastly, we want to show $r_{\epsilon}$ will not vanish when $\alpha\to\infty$. Since $\lim_{\alpha\to\infty}v_{\alpha}(\rho,v)= V^*(v)$, for almost every $v\in\mathbb{R}^d$ and $\rho$ with $\operatorname{supp}(\rho)=\mathbb{R}^d$,  the Fokker--Planck equation will converge to 
	\begin{equation}
		{\partial_ t}\rho_t=\lambda\operatorname{div}\left((v-V^*(v))\rho_t\right)+\frac{\sigma^2}{2}\Delta\left((\normm{v-V^*(v)}+\kappa)^2\rho_t\right),
	\end{equation}
	and $\rho_t$ is absolutely continuous~(also due to \cite[Theorem 6.3.1]{bogachev2022fokker}); hence, in view of the arguments above for this $\rho_t$, we have $r_\epsilon>0$ when $\alpha=\infty$, and therefore $r_{\epsilon}$ cannot vanish as $\alpha\to\infty$.

\end{proof}
\begin{remark}\label{rmk:8}
	We can construct $\phi_{r}(v)$ as follows:  $	\phi_{r}^{\tau}(v)$ is \revised{defined} as in  \eqref{eq:first} and it is easy to construct a barrier function $\Lambda_{R}(v)$ which is a smooth function on $\mathbb{R}^d$ such that it equals $1$ inside $B_{R}(0)$ and equals $0$ outside $B_{2R}(0)$. Based on $\phi_{r}^{\tau}(s)$ and $\Lambda_{R}(v)$, we can define $\phi_{r}(v):=\phi_{2r}^{\tau}(\frac{\operatorname{dist}^2(v,\partial \zonea)}{2r})\Lambda_{R}(v)$, then we know with fixed $v$, $\phi_{r}(v)$ is monotone increasing in terms of $r$.
\end{remark}

\paragraph{On the convergence of the fully discrete finite particle iterations \eqref{eq:alg}.}
We conclude this Appendix here with the estimates that explain the rationale of the convergence of the fully discrete finite particle algorithm \eqref{eq:alg}. In the following $\overline V_t^i$ represent i.i.d. realizations of a solution of \eqref{eq:n3} for $i=1,\dots,N$, and $V_t^i$ satisfies 
\begin{equation}\label{eq:empSDE}
d {V}^i_t
		=-\lambda\left(V^i_t-v_\alpha(\hat \rho_t^N, {V}^i_t )\right)dt+\sigma \left(\|V^i_t-v_\alpha(\widehat \rho^N_t,{V}^i_t)\|_2+\kappa\right) d B_t^i,\quad i=1,\dots,N.
\end{equation}
{
Denote $V_t:=((V_t^1)^\top,\ldots,(V_t^N)^\top)^{\top}$, then the above emprical SDE can be written in a compact way
\begin{equation}
    dV_t=G(V_t)dt+H(V_t)dB_t,
\end{equation}
where $G=((G^1)^\top,\ldots,(G^N)^\top)^\top,H=\operatorname{Diag}(H^1,\ldots,H^N)$ and $G(V_t)^i=-\lambda(V_t^i-v_{\alpha}(\hat{\rho}^N_t,V_t^i),$\\$H^i(V_t):=\sigma \left(\|V^i_t-v_\alpha(\widehat \rho^N_t,{V}^i_t)\|_2+\kappa\right)I_d$. Then we can show $H,G$ are locally Lipschitz and $H,G$ grow at most linearly,
since $v_{\alpha}(\hat{\rho}^N_t,V_t^i)$ is a convex combination between $\{V_t^i\}_{i=1}^N$ 
and so $\normm{v_{\alpha}(\hat{\rho}^N_t,V_t^i)}\leq \max_{i\in[N]}\{\normm{V_t^i}\}\leq \normm{V_t}$. Hence, we have that the above empirical SDE has a unique strong solution. 




}

Once again we consider the set of bounded processes
\begin{equation}
    \Omega_M= \left \{\sup_{t\in[0,T]} \frac{1}{N}\sum_{i=1}^N \max\left\{\N{V_t^i}_2^4,\N{\overbar{V}_t^i}_2^4\right\} \leq M\right \},
\end{equation} for any $M>0$, as we did in \eqref{eq:bndp}.
In analogy to the chain of inequalities \eqref{B1-ownresult}, we should study instead $\frac{1}{N}\sum_{i=1}^N\Ep{\normmsq{F(V_{K}^i)}}$, where $F(v)=v-V^*(v)$, then we have 
\begin{equation}
   \begin{aligned}
        \frac{1}{N}\sum_{i=1}^N\Ep{\normmsq{F(V_{K}^i)}\Bigg | \Omega_M}&=\underbrace{\frac{1}{N}\sum_{i=1}^N\Ep{\normmsq{F(\overline{}{V}_{T^*}^i)}\Bigg | \Omega_M}}_{I}\\
        &+\underbrace{\frac{1}{N}\sum_{i=1}^N\Ep{\normmsq{F({V}_{T^*}^i)}\Bigg | \Omega_M}-\frac{1}{N}\sum_{i=1}^N\Ep{\normmsq{F(\overline{V}_{T^*}^i)}\Bigg | \Omega_M}}_{II}\\
        &+\underbrace{\frac{1}{N}\sum_{i=1}^N\Ep{\normmsq{F({V}_{K}^i)}\Bigg | \Omega_M}-\frac{1}{N}\sum_{i=1}^N\Ep{\normmsq{F({V}_{T^*}^i)}\Bigg | \Omega_M}}_{III},
   \end{aligned}
\end{equation}
where $K\Delta t=T^*$.
We have $I\leq \epsilon$ by the main result of this paper, Theorem \ref{thm:main0}. From the estimate \eqref{eq:lala} and the analysis of \eqref{eq:104104}, we know that, for any transport coupling $\pi$ between $\rho_1$ and $\rho_2$, we have 
\begin{equation}
    \Big|\int \normm{F(v)}^2d\rho_1(v)-\int\normm{F(w)}^2d\rho_2(w)\Big|\leq C\sqrt{\iint\normmsq{v-w}d\pi(v,w)},
\end{equation}
so for term $II$, we obtain 
\begin{equation}\label{eq:qmfl2}
    II\leq C\Ep{\sqrt{\frac{1}{N}\sum_{i=1}^N{\normmsq{\overline{V}_{T^*}^i-V_{T^*}^i}}}\Bigg | \Omega_M}\leq C\sqrt{\Ep{{\frac{1}{N}\sum_{i=1}^N{\normmsq{\overline{V}_{T^*}^i-V_{T^*}^i}}}\Bigg | \Omega_M}}\leq C_{\rm MFA}\frac{1}{\sqrt{N}},
\end{equation}
where the last inequality is due to a similar analysis as in \cite[Proposition 3.11]{B1-fornasier2021global} and $C_{\rm MFA}>0$ may depend exponentially on $M>0$.

Similarly, and using standard results from \cite{platen1999introduction}, for term $III$, we have
\begin{equation}
    \begin{aligned}
        &III\leq C\sqrt{\Ep{\frac{1}{N}\sum_{i=1}^N{\normmsq{{V}_{{K}}^i-V_{T^*}^i}}\Bigg | \Omega_M}}\leq C_{\rm NA}{(\Delta t)^s},
    \end{aligned}
\end{equation}
for some index $s>0$. Then by using the same arguments as in \eqref{B1-ownresult2}, we conclude 
\begin{equation}
     \mathbb P\Bigg( \frac{1}{N} \sum_{i=1}^N\normmsq{V_{K}^i-V^*(V_K^i)} \leq \varepsilon\Bigg) \geq 1 - \left [ \varepsilon^{-1} (C_{\mathrm{NA}}\Delta t+C_{\mathrm{MFA}} N^{-1}+\epsilon) -\mathbb P (\Omega_M^c) \right ].  
\end{equation}
{Unfortunately, a uniform bound with respect to $N$ of the moments of the solution of \eqref{eq:empSDE} remains elusive, because for $v_\alpha(\rho,v)$ in estimates like \eqref{eq:2626} the constant $b_2$ depends on $\|v\|_2^2$ (compare also \cite[Lemma 3.3]{carrillo2018analytical}). This gap does not allow us yet to provide an explicit estimate of $\mathbb P  (\Omega_M^c)$, uniformly with respect to $N$ as done, e.g., in \cite[Lemma 3.10]{B1-fornasier2021global}.}

\section*{Acknowledgements and Competing Interests}

%

This work was initiated while LS visited the Chair of Applied
Numerical Analysis of the Technical University of Munich in June 2023. LS thanks the hospitality of the Chair of Applied
Numerical Analysis of the Technical University of Munich, Konstantin Riedl for discussion and the support of Prof. Peter Richt\'arik. MS acknowledges the partial support of the Munich Center for Machine Learning.

\bibliography{bib}
\bibliographystyle{abbrv}
\end{document}